\newcommand\R{\mathbb R}
\newcommand\C{{\mathbb C}}
\newcommand\g{{\mathfrak g}}
\renewcommand\k{{\mathfrak k}}
\newcommand\m{{\mathfrak m}}
\renewcommand\t{{\mathfrak t}}
\newcommand\q{{\mathfrak q}}
\renewcommand\a{{\mathfrak a}}
\newcommand\h{{\mathfrak h}}
\newcommand\quot[2]{#1/\!\!/#2}
\newcommand\Ad{{Ad}}
\newcommand\Exp{{Exp}}
\renewcommand\span{{span}}
\renewcommand\O{{\mathcal O}}
\newcommand\Ou{{\mathcal O_u}}
\newcommand\Onc{{\mathcal O_{0}}}
\newcommand\ad{{ad}}
\newcommand\twist[3]{{#1*^{#2}{#3}}}
\newcommand\proj[2]{{proj^{#1}_{#2}}}
\newtheorem{lemma}{Lemma}[section]
\newtheorem{defn}[lemma]{Definition}
\newtheorem{prop}[lemma]{Proposition}
\newtheorem{theorem}[lemma]{Theorem}
\newtheorem{cor}[lemma]{Corollary}
\begin{document}

\author{Ralph J. Bremigan}
\address{Department of Mathematical Sciences, Ball State University, Muncie IN 47306-0490}
\email{bremigan@bsu.edu}

\title[Complexified Hermitian Symmetric Spaces]{Complexified Hermitian Symmetric Spaces, \\Hyperk\"ahler Structures, and Real Group Actions}


\begin{abstract}
There is a known hyperk\"ahler structure on any complexified Hermitian symmetric space $G/K$, whose construction relies on identifying $G/K$ with both a (co)adjoint orbit  and the cotangent bundle to the compact Hermitian symmetric space $G_u/K_0$.  Via a family of explicit diffeomorphisms, we show that almost all of the complex structures are equivalent to the one on $G/K$; via a family of related  diffeomorphisms, we show that almost all of the symplectic structures are equivalent to the one on $T^*\left(G_u/K_0\right)$.  We highlight the intermediate K\"ahler structures, which share a holomorphic action of $G$ related to the one on $G/K$, but moment geometry related to that of $T^*\left(G_u/K_0\right)$.  As an application, for the real form $G_0\subset G$ corresponding to $G_0/K_0$,  the Hermitian symmetric space of noncompact type, we give a strategy for study of the action on $G/K$ using the moment-critical subsets for the intermediate structures.  We give explicit computations for $SL(2)$.  
\end{abstract}
\thanks{Primary  53C26;  Secondary 32M15, 53C35, 53D20}
\keywords{hyperk\"ahler, Hermitian symmetric space, moment map}
\date{July 23, 2019}

\maketitle

\section{Introduction}\label{s:intro}

This paper serves two purposes:

\begin{itemize}
\item To extend understanding of the hyperk\"ahler structure on complexified Hermitian symmetric spaces.  
\item To begin an exploration of moment-map techniques in this specific hyperk\"ahler context.  
\end{itemize}

We begin by addressing the first of these purposes.  The hyperk\"ahler structure has been developed by several authors, including D. Burns \cite{Bu}, A. Dancer and R. Sz\"oke \cite{DS}, and culminating in the papers of O. Biquard and P. Gauduchon \cite{BG2}, \cite{BG3}, \cite{BG4}.   (See Section \ref{s:notation} below for notation and basic facts about Hermitian symmetric spaces, and Section \ref{s:notation2} for an overview of hyperk\"ahler structures.)  The structure relies on two  realizations of the complexified Hermitian symmetric spaces, and structures on them, that are linked in a nontrivial way.  One such realization is as the adjoint orbit of an element $\Upsilon\in\g$ by a complex semisimple group $G$.  On this space, there is an obvious complex structure, which we denote $J_1$, and the Kirillov-Kostant-Souriau holomorphic symplectic form, which we denote $-\omega_3+i\omega_2$ (breaking the complex-valued form into its real and imaginary parts).  The second such realization of the complexified Hermitian symmetric space is as the cotangent bundle to a flag manifold $G/Q$; this gives a complex structure $J_3$, as well as the usual form on a cotangent bundle.  We can write this form as $\omega_1+i\omega_2$ (where, crucially,  the two occurrences  of $\omega_2$ coincide).  The flag manifold $G/Q$ can be viewed as the adjoint orbit of $\Upsilon$ by a compact real form $G_u$ of $G$.  The bridge between these two realizations (recalled in Section \ref{s:geometry}) is a $G_u$-equivariant map that allows us to consider $J_1$ and $\omega_3$ in the second realization and $J_3$ and $\omega_1$ in the first realization.  Thus in both realizations, we can see  the totality of the hyperk\"ahler structure (complex structures 
$J_1$, $J_3$, $J_2:=J_3J_1$; real symplectic forms $\omega_1$, $\omega_2$, $\omega_3$; metric $m$).  

A feature of every hyperk\"ahler structure is the existence, not only of $J_1$, $J_2$, and $J_3$, but of a complex structure for each unit linear combination $J_{(\lambda_1,\lambda_2,\lambda_3)}:=\lambda_1J_1+\lambda_2J_2+\lambda_3J_3$, which has an associated symplectic form $\omega_{(\lambda_1,\lambda_2,\lambda_3)}:=\lambda_1\omega_1+\lambda_2\omega_2+\lambda_3\omega_3$.  We can identify the triple $(\lambda_1,\lambda_2,\lambda_3)$ with a point $\lambda$ in the extended complex plane $\C\cup\{\infty\}$ via stereographic projection (Definition \ref{d:stereo}).  Remarkably, the complex number $\lambda$ plays a role in an explicit $G_u$-equivariant self-bijection $T_\lambda$ of the complexified Hermitian symmetric space, which provides a new understanding of how the complex structure $J_{(\lambda_1,\lambda_2,\lambda_3)}$ and the symplectic form $\omega_{(\lambda_1,\lambda_2,\lambda_3)}$ vary with $(\lambda_1,\lambda_2,\lambda_3)$.  The main results are in Section \ref{s:deformations}.  Among them:

\begin{itemize}
\item Via the map $T_\lambda$, each $J_{(\lambda_1,\lambda_2,\lambda_3)}$ except $\pm J_3$ is equivalent 
to $J_1$   (Theorem \ref{t:equiv}.)  
\item Via a map related to $T_\lambda$, for any $(\lambda_1,\lambda_2,\lambda_3)$ with $\lambda_3\neq 0$, the symplectic form $\omega_{(\lambda_1,\lambda_2,\lambda_3)}$ is equivalent to a scalar multiple of $\omega_3$  (Theorem \ref{t:J3}).
\end{itemize}
Thus, from the point of view of complex structures, we can regard $J_1$ and the $G$-orbit of $\Upsilon$ as a model for almost all the complex structures and for the associated holomorphic actions of $G$, with a degeneracy at $\pm J_3$.  From the point of view of real symplectic geometry and associated moment maps, we can regard the cotangent bundle to the $G_u$ orbit of $\Upsilon$ as a model, with degeneracy along the circle of complex structures that contains $J_1$ and $J_2$.  

We now turn briefly to the second purpose of this paper.  There is a theory of  the  use of  moment-map techniques to investigate actions of complex reductive groups, notably developed in the  work of F. Kirwan (\cite{Ki}; see also \cite{N}, \cite{KN}).  This theory has received a significant extension  to certain actions of real reductive groups, by P. Heinzner, G. Schwarz, and H. St\"otzel \cite{HS}, \cite{HHS}.  Their techniques apply here to the study of the action of $G_0$ (the real form of $G$ corresponding to the Hermitian symmetric space of noncompact type) on the complexified Hermitian symmetric space.  In fact, there is a  family of actions of $G_0$, the family being parametrized by a 2-sphere, each action corresponding to a choice of  $(\lambda_1,\lambda_2,\lambda_3)$, with its complex structure, symplectic form, and moment map.  Here, our main results shed light, both qualitatively and computationally, on these actions:  we have that most actions are equivalent to the usual one on the $G$-orbit of $\Upsilon$; most moment-critical subsets are equivalent to the ones for the action on the cotangent bundle to $G/Q$; and the maps $T_\lambda$ allow us to make explicit computations.  This points out the singular nature of both of our initial realizations of the complexified Hermitian symmetric space (the ones associated to $J_1$ and $J_3$), since for most choices of complex structure (a choice of $(\lambda_1,\lambda_2,\lambda_3)$ with $\lambda_3\notin\{0,1,-1\}$), the action is equivalent to the one associated to $J_1$ (but not $J_3$), and the symplectic geometry and moment-critical subsets are equivalent to the ones associated to $J_3$ (but not $J_1$).  It is not hard to imagine that study of these intermediate structures would lead to new perspectives on the more familiar $J_1$ and $J_3$ structures, e.g., in the construction of representations.  A full working out for arbitrary Hermitian symmetric spaces is beyond the scope of this paper, but in Section \ref{s:example}, we present a detailed account (without proofs) for $G=SL(2,\C)$.

It should be noted that hyperk\"ahler structures on coadjoint orbits are known to exist outside of the setting of complexified Hermitian symmetric spaces, and it would be interesting to know how the results of this paper extend.  However, only in the Hermitian symmetric setting is the hyperk\"ahler structure known so explicitly.

We conclude this introduction with a few words about the structure of this paper, and our computational strategy and proofs (both of the new results and the exposition  of the hyperk\"ahler structure).  The formulas for $\omega_1$, $\omega_2$, $\omega_3$, and the metric $m$ depend on specific choices of tangent vectors, and  are expressed as modifications of the Killing form.  We have expressed these modifications in terms of operators that are written as power series, and which ultimately derive from the exponential map.  These operators are defined in Section \ref{s:computation}, where we prove some basic properties; in the same section, we define the operators $T_\lambda$, which are so important in understanding the deformation results stated in Section \ref{s:deformations}.  Our version of the development of the hyperk\"ahler structure on complexified Hermitian symmetric spaces is carried out in Section \ref{s:quaternionic} and the first part of Section \ref{s:hyper}.  Most of this is straightforward, using the results developed in Section \ref{s:computation}.  The second half of Section \ref{s:hyper} includes some additional information (e.g., about K\"ahler potentials) that is not needed elsewhere.  In Section \ref{s:moment}, we recall and prove known formulas for the moment maps; we expect to use this in future work, but it is not needed elsewhere in the paper.  Our main results about deformations are stated in Section \ref{s:deformations} and proved in Section \ref{s:proofs}.  The technical results from Section \ref{s:computation} are used, along with some minor bookkeeping involving the Killing form.  The more substantive arguments in Section \ref{s:proofs} are made necessary by an essential but complicating property of the map $T_\lambda$:  it relies on a map $t_\lambda:\m\rightarrow \m$ that does not generally preserve the real form $\m_0$.  Since the description of the hyperk\"ahler structure relies on a decomposition of the complexified Hermitian symmetric space that involves $\m_0$ (Section \ref{sss:OandOu}), the map $T_\lambda$ is a bit of challenge.  The more complicated arguments in Section \ref{s:proofs}, at their heart,  contend with the problem of resolving tangent vectors into parts corresponding to $\m_0$ and $i\m_0$.

\section{Notation and recap of Hermitian symmetric spaces}\label{s:notation}

See \cite{D}, \cite{W}.

German letters denote Lie algebras of the corresponding Lie groups. 

We use $I$ to denote the identity map, and $J$ (with various subscripts) to denote  complex structures.

Let $G$ be a simple  algebraic group  over $\mathbb C$, endowed with commuting antiholomorphic involutions  $\sigma$ and $\theta$.  Assume that the fixed point group $G_u:=G^\theta$ is a maximal compact subgroup of $G$. 
The fixed point group  $G_0:=G^\sigma$  is another real form of $G$. We use $\sigma$ and $\theta$ to denote the involutions of the Lie algebra $\g$ as well. 

Let $K$ denote the fixed points of the algebraic involution $\sigma\theta$. Note that $K$ is a complex reductive group, and  $K_0:=K\cap G_0$ is both a compact real form of $K$ and a maximal compact subgroup of $G_0$.  
We use $\Re$ and $\Im$ to denote the real and imaginary parts with respect to the decomposition $\g=\g_u\oplus i\g_u$ (as well as for the real and imaginary parts of complex numbers).   

We have Lie algebra decompositions $\g_0=\k_0\oplus \m_0$ and $\g_u=\k_0\oplus \m_u$ (with $\m_u=i\m_0$); these are  $\pm 1$ eigenspaces decompositions for $\theta$ and $\sigma$, respectively, and are orthogonal decompositions with respect to the Killing form.  By complexifying, we have $\k=\C\otimes_\R \k_0$ and $\m=\C\otimes_\R\m_0$, and then $\g=\k\oplus\m$ is the $\pm1$ eigenspace decomposition of $\g$ with respect to $\sigma\theta$.  

We assume thoughout that $G_u/K_0$ is an \underbar{Hermitian symmetric space}  (irreducible and of compact type); it follows that $G_0/K_0$ is an Hermitian symmetric space of noncompact type.  There is a unique (up to sign) element $\Upsilon\in\k_0$ in the center of $\k_0$  such that the eigenvalues of $\ad_\Upsilon$ on $\m$ are $\pm i$. We write $\m=\m^+\oplus \m^-$ for the eigenspace decomposition, noting that 
$\m^+$ and $\m^-$ are complex vector spaces.  We let $J:\m\rightarrow\m$ denote the operator $\ad_\Upsilon$; the restriction of $J$ to $\m_0$ (resp.  $\m_u$) gives the almost complex structure on $G_0/K_0$ (resp. $G_u/K_0$) at the base point.

Using the definition of $\Upsilon$, we have that $\sigma(\m^+)=\m^-=\theta(\m^+)$, 
and likewise, $\sigma$ and $\theta$ send $\m^-$ to $\m^+$.  It follows that every 
element $X$ of $\m_0$ can be written  as $(I+\sigma)X^+=(I-\theta)X^+$ for a unique 
$X^+\in\m^+$, and each element of $\m_u$ is $(I+\theta)X^+=(I-\sigma)X^+$ for a unique $X^+\in\m^+$. 
Note that $\ad_\Upsilon((I+\sigma)(X^+))=(I+\sigma)(iX^+)$, with the analogous statement when $\sigma$ is replaced with $\theta$.  Thus the $\R$-linear map $\m^+\rightarrow \m_0$, $X^+\mapsto (I+\sigma)X^+$  intertwines the almost complex structures $i$ and $J$, with analogous statement for  $\m_u$ and $\theta$.  

Since $\theta$ is a Lie algebra automorphism of $\g_0$, we have that $[\k_0,\k_0]\subseteq \k_0$, 
$[\k_0,\m_0]\subseteq\m_0$, and $[\m_0,\m_0]\subseteq\k_0$, with similar statements for $\g$, $\k$, and $\m$.
From the fact that $\Upsilon$ is in the center of $\k$, it follows easily that $[\k,\m^+]\subseteq\m^+$ and
$[\k,\m^-]\subseteq\m^-$.   The restriction of the Killing form to either $\m^+$ or $\m^-$ is identically zero.  
Since the bracket of any two elements of $\m^+$ would be an eigenvector of $\ad_\Upsilon$ with eigenvalue $2i$, and since this eigenvalue does not occur, we have that $[\m^+,\m^+]=\{0\}$, and likewise $[\m^-,\m^-]=\{0\}$.  
If $Z\in\m_0$, then the action of $\ad_Z$ on $\g_0$ is diagonalizable with real eigenvalues, and the action of $\ad_Z^2$ on $\m_0$ is diagonalizable with nonnegative real eigenvalues.

Any Cartan subalgebra  $\t\subseteq \k$ contains $\Upsilon$; moreover $\t$ is a Cartan subalgebra of $\g$ (since any element of $\g$ commuting with $\t$ would commute with $\Upsilon$, but 
$\ad_\Upsilon$ has eigenvalues $\pm i$ on $\m$). 
The Cartan subalgebra $\t$ is the Lie algebra of a maximal complex torus $T$ of $K$ and $G$. 
We fix a Cartan subalgebra  $\t\subset \k$ that is stable under $\theta$ (such exist, and all such are conjugate under $K_0$).  
Since $\Upsilon\in\t$, we see that each root space $\g_{\alpha}$ is contained in $\k$, $\m^+$, or $\m^-$.  We may take
 an ordering of the roots $\Phi(\g,\t)$ so that $\m^+$ is contained in the sum of the positive root spaces.  

Given $\alpha\in\Phi(\m^+,\t)$, one can choose $e_\alpha\in\g_\alpha$ such that 
$\left(h_\alpha:=[e_\alpha, \sigma e_\alpha], e_\alpha, f_\alpha:=\sigma e_\alpha\right)$ forms a standard split $sl_2$-triple, meaning $[h_\alpha,e_\alpha]=2e_\alpha$,  $[h_\alpha, f_\alpha]=-2f_\alpha$, and $[e_\alpha, f_\alpha]=h_\alpha$.  
Note that $h_\alpha\in i\k_0\cap \t$, $e_\alpha+f_\alpha\in\m_0$, and $e_\alpha-f_\alpha\in \m_u$.   Define $x_\alpha=e_\alpha+f_\alpha$ and 
$y_\alpha=i(e_\alpha-f_\alpha)$.  With regard to complex structures, we have that $\ad_\Upsilon x_\alpha=y_\alpha$ and $\ad_\Upsilon y_\alpha=-x_\alpha$.  
We define $\Upsilon_\alpha=\frac i2h_\alpha$.  We have the commutation relations
$[\Upsilon_\alpha,x_\alpha]=y_\alpha$, $[\Upsilon_\alpha,y_\alpha]=-x_\alpha$, and $[x_\alpha,y_\alpha]=-4\Upsilon_\alpha$.\footnote{A good example to keep in mind is $G_u=SU(2)$, $G_0=SU(1,1)$. 
For any $C\in\C$ with $|C|=1$, we can take  $e_\alpha=\left(\begin{array}{cc}0&C\\0&0\end{array}\right)$, $f_\alpha=\left(\begin{array}{cc}0&0\\\overline{C}&0\end{array}\right)$, and $h_\alpha=\left(\begin{array}{cc}1&0\\0&-1\end{array}\right)$.
Then $x_\alpha=\left(\begin{array}{cc}0&C\\\overline C&0\end{array}\right)$,  $y_\alpha=\left(\begin{array}{cc}0&iC\\-i\overline C&0\end{array}\right)$, and 
$\Upsilon_\alpha=
\frac i2\left(\begin{array}{cc}1&0\\0&-1\end{array}\right)$.  For the choice $C=1$, we would have  $e_\alpha=\left(\begin{array}{cc}0&1\\0&0\end{array}\right)$, $f_\alpha=\left(\begin{array}{cc}0&0\\1&0\end{array}\right)$,  $h_\alpha=\left(\begin{array}{cc}1&0\\0&-1\end{array}\right)$, 
$x_\alpha=\left(\begin{array}{cc}0&1\\1&0\end{array}\right)$,  $y_\alpha=\left(\begin{array}{cc}0&i\\-i&0\end{array}\right)$, and $\Upsilon_\alpha=
\frac i2\left(\begin{array}{cc}1&0\\0&-1\end{array}\right)$. With regard to the Cayley transform, we have $c=\frac 1{\sqrt 2}\left(\begin{array}{cc}1&-1\\1&1\end{array}\right)$ and $c^2=\left(\begin{array}{cc}0&-1\\1&0\end{array}\right)$.}
We obtain a real basis of $\m_0$ from the elements $x_\alpha$ and 
$y_\alpha$, where $\alpha$ runs through the positive $\m$-roots.

There exists a  {maximal set $\Psi=\{\psi_1,\dots,\psi_r\}$ of strongly orthogonal $\m^+$-roots} (meaning that for $i\neq j$, $\psi_i\pm\psi_j$ are not roots).  
For any $\psi_i$ and $\psi_j$ in $\Psi$ with $i\neq j$, each element in 
$\{e_{\psi_i}, f_{\psi_i}, h_{\psi_i}, x_{\psi_i}, y_{\psi_i}, \Upsilon_{\psi_i}\}$ commutes with each element in 
$\{e_{\psi_j}, f_{\psi_j}, h_{\psi_j}, x_{\psi_j}, y_{\psi_j}, \Upsilon_{\psi_j}\}$.  
Otherwise said: let $\g[\psi]=span_\C\{e_{\psi},f_{\psi},h_{\psi}\}\simeq sl(2,\C)$; then for $i\neq j$, each element of $\g[\psi_i]$ commutes with each element of $\g[\psi_j]$. 
Let  $\a_0=\span_\R \{x_\psi:\psi\in\Psi\}$, an  abelian subalgebra of $\g_0$, contained in $\m_0$, and maximal with respect to these properties; let $\a=\C\otimes_{\R}\a_0$.  Any two maximal abelian subalgebras of $\m_0$ are $K_0$-conjugate, and every element of $\m_0$ lives in some such maximal abelian subalgebra.   
  We  define $\Upsilon'\in\t\cap\k_0$ by the formula $\Upsilon=\Upsilon'+\sum_{\psi\in\Psi}\Upsilon_\psi$; note that  $\Upsilon'$ commutes with each $\g[\psi]$.

Let $\t^-=\sum_{\psi\in\Psi}\C h_\psi$, and let  $\t^+=\{h\in\t:\psi(h)=0$  for all $\psi\in\Psi\}$, the orthogonal complement in $\t$ of $\t^-$.  Elements of $\Phi(\g,\t^-)$ are called {\em restricted roots.}  Results of Harish-Chandra and Moore give descriptions of restricted roots and their multiplicities; in particular, the only possible restricted roots are the restrictions to $\t^-$ of $\pm \frac 12 \psi_s\pm\frac 12\psi_t$   and $\pm\frac 12\psi_t$ for roots $\psi_s,\psi_t\in\Psi$.  
Define $c\in G_u$ by $c=\exp\left(\sum_{\psi\in\Psi}\frac{i\pi}4y_\psi\right)$, a  {\em (full) Cayley transform}.  Under the adjoint representation, $c$ sends $h_\psi$ to $x_\psi$, $x_\psi$ to $-h_\psi$, and $y_\psi$ to itself. 
Note that  $\Ad_c$ centralizes $\t^+$ and conjugates $\t^-$ to $\a$.  
Let $\h=\Ad_c\t=\t^+\oplus \a$, another Cartan subalgebra of $\g$.  The map $\Ad_c$ allows us to identify  the roots systems $\Phi(\g,\t)$ and $\Phi(\g,\h)$.  Elements of $\Phi(\g,\a)$ are again called restricted roots.  The Weyl group $N_{K_0}(\a_0)/Z_{K_0}(\a_0)$ acts on $\{\pm x_{\psi}\}_{\psi\in\Psi}$ as signed permutations.  

Let $\q=\k\oplus\m^-$; it is the Lie algebra of a parabolic subgroup $Q\subseteq G$.  It is well-known that 
$G_u\cap Q=K_0$ and that $G_u$ acts transitively on $G/Q$.  Moreover, if we let $\Ou\subset \g_u
$ denote the $\Ad(G_u)$-orbit of $\Upsilon$, then the $G_u$-stabilizer of 
$\Upsilon$ is $K_0$.  We obtain the identifications 
$\Ou:=\Ad_{G_u}(\Upsilon)\simeq G_u/K_0\simeq G/Q.$
There is a corresponding realization of the noncompact Hermitian symmetric space  as 
$G_0/K_0\simeq \Onc\subseteq \g_0$, where $\Onc$ is the $\Ad(G_0)$-orbit of $\Upsilon$.  
We define  $\O\subset\g$, the $\Ad(G)$-orbit of $\Upsilon$; here $\O\simeq G/K$. 
The isomorphism of $G_u/K_0$ with   $G/Q$, and the inclusion of $G_0/K_0$ in $G/Q$,  give  complex structures on $\Ou$ and $\Onc$;  these agree with the almost complex structures defined by $\Upsilon$.     The space $\O$
is  the ``complexified Hermitian symmetric space'' in the title of this paper.

\section{Hyperk\"ahler structures}\label{s:notation2}

See \cite{BG4}, \cite{H}.  

We recall that a hyperk\"ahler structure on a real manifold $M$ encompasses the following features:

\begin{itemize}
\item Almost complex structures $J_1$, $J_2$, $J_3$ on $M$  that obey the usual quaternionic multiplication rules.  It follows that for each unit vector $(\lambda_1,\lambda_2, \lambda_3)\in\R^3$, the operator $J_{(\lambda_1,\lambda_2,\lambda_3)}:=\lambda_1J_1+\lambda_2J_2+\lambda_3J_3$ gives an almost complex structure.  
\item Integrable complex structures underlying the above almost complex structures.  
\item A Riemannian metric $m$ on $M$ that is invariant under each $J_{(\lambda_1,\lambda_2,\lambda_3)}$.  
\item For each unit vector ${(\lambda_1,\lambda_2,\lambda_3)}$, a real, closed, nondegenerate, $J_{(\lambda_1,\lambda_2,\lambda_3)}$-invariant 2-form $\omega_{{(\lambda_1,\lambda_2,\lambda_3)}}$ on $M$.  This form is given explicitly by the rule 
$\omega_{(\lambda_1,\lambda_2,\lambda_3)}(v,w)=-m(v,J_{(\lambda_1,\lambda_2,\lambda_3)} w)$.  
If $\omega_1$, $\omega_2$, $\omega_3$ denote the 2-forms associated to $J_1$, $J_2$, $J_3$, then $\omega_{{(\lambda_1,\lambda_2,\lambda_3)}}=\sum_i\lambda_i\omega_i$.  
\item  For each $J_{(\lambda_1,\lambda_2,\lambda_3)}$, a class of nondegenerate holomorphic symplectic forms (where ``class'' means up to scalar multiples by $\C^*$).  For $J_{(\lambda_1,\lambda_2,\lambda_3)}$ fixed, one such holomorphic symplectic form is obtained via unit vectors ${(\lambda_1',\lambda_2',\lambda_3')}$ and ${(\lambda_1'',\lambda_2'',\lambda_3'')}$, where 
${(\lambda_1,\lambda_2,\lambda_3)}$, ${(\lambda_1',\lambda_2',\lambda_3')}$, and ${(\lambda_1'',\lambda_2'',\lambda_3'')}$ are mutually orthogonal and obey the right-hand rule.  The associated holomorphic symplectic form is 
$\omega_{{(\lambda_1',\lambda_2',\lambda_3')}}   +   i\,\omega_{{(\lambda_1'',\lambda_2'',\lambda_3'')}}$.
\end{itemize}

There is redundancy among these conditions.  In Proposition \ref{prop:existence}, we use a smaller set of conditions that determine the hyperk\"ahler structure.

Our main results will rely on identifying the complex structures $\{J_{(\lambda_1,\lambda_2,\lambda_3)}\}$ with points in the extended complex plane:  

\begin{defn}\label{d:stereo}
For $\lambda\in\C\cup\{\infty\}$, let $(\lambda_1,\lambda_2,\lambda_3)$ denote the point on the unit 2-sphere in $\R^3$ defined by stereographic projection from the south pole:

$$\lambda_1=\frac{\lambda+\overline\lambda}{1+|\lambda|^2}\qquad\qquad \lambda_2=\frac{\lambda-\overline\lambda}{i(1+|\lambda|^2)}\qquad\qquad \lambda_3=\frac{1-|\lambda|^2}{1+|\lambda|^2}$$
 
If $\lambda\in\C\cup\{\infty\}$, let $J_{(\lambda)}$ denote $J_{(\lambda_1,\lambda_2,\lambda_3)}$, with $\lambda_1,\lambda_2,\lambda_3$ as above.  Similarly, we use $\omega_{(\lambda_1,\lambda_2,\lambda_3)}$ and $\omega_{(\lambda)}$ interchangeably.  \end{defn}

Note that for $\lambda\in\C^*$ exactly when $J_{(\lambda)}$ is not $\pm J_3$, as $J_3=J_{(0)}$ and $-J_3=J_{(\infty)}$.  Note moreover that $J_1=J_{(1)}$ and $J_2=J_{(i)}$.

\section{Computational preliminaries}\label{s:computation}

In this section, we define and study some operators that are essential for expressing the hyperk\"ahler structure on $\O$.  

$\bullet$ In \S\ref{ss:EZFZ}: For $Z\in\m_0$, the linear map $E_Z:\m\rightarrow\m, \m_0\rightarrow\m_0$ is the power series in $\ad_Z$ associated to the differential of the exponential map $\m_0\rightarrow G_0/K_0$, or equally well, the even part of the differential of the exponential map $\g\rightarrow G$.  

$\bullet$ In \S\ref{ss:SZ}:  For $Z\in\m_0$, the linear map $S_Z:\m\rightarrow\m, \m_0\rightarrow\m_0$ is the power series in $\ad_Z$ that gives $W\mapsto proj^{\g}_{\m}\Ad_{\exp Z}W$. 

These power series have only even terms.  In Proposition \ref{prop:squarescommute} and Corollary \ref{cor:order2again} (\S\ref{ss:powerseries}), we verify the crucial fact that such power series commute with their $J$-conjugates.  

$\bullet$ In \S\ref{ss:P}:  The real analytic map $P:\m_0\rightarrow\m_0$ is closely related to the map  $Z\mapsto proj^{\g}_{\m}\left(\Ad_{\exp Z}\Upsilon\right)$.  

$\bullet$ In \S\ref{ss:t}: The maps $t_\lambda:\g\rightarrow\g$ and $T_\lambda:\O\rightarrow\O$ are crucial for stating the deformation results in \S\ref{s:deformations}.

\subsection{Power series and $\ad_{Z}^2$}\label{ss:powerseries}

Let $p(z)$ denote a power series with complex coefficients, let $Z=Z^++Z^-\in\m$, and suppose that $p(\ad_Z)$ converges as an operator on $\g$.  We write $p(z)=p_{even}(z)+p_{odd}(z)$ to represent the decomposition into sums of even and odd powers of $z$.  
It is trivial  that $p_{even}(\ad_Z)$ preserves $\k$ and $\m$ while 
$p_{odd}(\ad_Z)$ maps $\k$ to $\m$ and $\m$ to $\k$.  If $Z\in\g_0$ (resp. $\g_u$) and if the power series has real coefficients, then these statements also hold with subscripts of $0$ (resp. $u$) inserted.  Recall that $\m_0=i\m_u$; throughout, we  use the fact that if $Z\in\m_0$ and $p$ has real coefficients, then $p_{even}(\ad_Z)$ preserves both $\m_0$ and $\m_u$. 

Simple computations underlie the following lemma: 

\begin{lemma}\label{lemma:order2}       

Let $Z,W\in\m$ and $X\in\k$. 
\begin{enumerate}
\item $\ad_{JZ}^2(W)=-J\left(\ad_Z^2(JW)\right)=-\ad_{Z^-}^2W^++\ad_{Z^-}\ad_{Z^+}W^-+\ad_{Z^+}\ad_{Z^-}W^+-\ad_{Z^+}^2W^-$.
\item If $p$ only has even-order terms, then as endomorphisms of $\m$, we have $p\left(\ad_{JZ}\right)=-J\circ p\left(\ad_Z\right)\circ 
J$. 
\item $[JZ,JW]=[Z,W]$ and $J[X,W]=[X,JW]$.  \qed
\end{enumerate}
\end{lemma}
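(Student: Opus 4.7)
The plan is to exploit the eigenspace decomposition $\m=\m^+\oplus\m^-$ on which $J=\ad_\Upsilon$ acts as multiplication by $\pm i$, together with the bracket facts recalled in Section \ref{s:notation}: $[\m^+,\m^+]=[\m^-,\m^-]=0$, $[\m^+,\m^-]\subseteq\k$, and $[\k,\m^\pm]\subseteq\m^\pm$ (the last being equivalent to the statement that $\ad_X$ commutes with $J$ on $\m$ for any $X\in\k$, since $\Upsilon$ is central in $\k$). Note also $J^2=-I$ on $\m$. I would prove the three items in the order (3), (1), (2), since each uses the preceding.

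For (3), writing $Z=Z^++Z^-$ and $W=W^++W^-$, one has $JZ=iZ^+-iZ^-$ and similarly for $JW$. Expanding $[JZ,JW]$ bilinearly, the $\m^+$-$\m^+$ and $\m^-$-$\m^-$ brackets vanish, and the two surviving mixed terms each carry $i\cdot(-i)=1$, giving $[Z^+,W^-]+[Z^-,W^+]=[Z,W]$. For the second identity in (3), $\Upsilon$ being central in $\k$ gives $[X,\Upsilon]=0$, and Jacobi yields $[X,JW]=[X,[\Upsilon,W]]=[\Upsilon,[X,W]]=J[X,W]$.

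For (1), I would compute $\ad_{JZ}(W)=[iZ^+-iZ^-,W^++W^-]$; the $\pm\pm$ terms drop out, leaving $i[Z^+,W^-]-i[Z^-,W^+]\in\k$. Applying $\ad_{JZ}$ once more and using $[\k,\m^\pm]\subseteq\m^\pm$ to keep track of which eigenspace each double bracket lands in, the two factors of $\pm i$ combine to produce the displayed four-term expression. For the first equality, I would expand $\ad_Z^2(JW)$ the same way, identify the $\m^+$-part and $\m^-$-part of each of the four resulting double brackets, then apply $J$ (multiplication by $+i$ or $-i$ on the respective pieces) and negate; a direct comparison shows agreement with $\ad_{JZ}^2(W)$.

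For (2), iterating (1) and using $J^2=-I$ on $\m$ gives $\ad_{JZ}^{2n}=(-J\,\ad_Z^2\,J)^n$; the $n-1$ internal pairs $JJ$ collapse to $-I$ each, contributing $(-1)^{n-1}$, which combines with the outer $(-1)^n$ from the $n$ minus signs to yield a single factor of $-1$, so $\ad_{JZ}^{2n}=-J\,\ad_{Z}^{2n}\,J$. Summing the even power series $p(z)=\sum a_n z^{2n}$ gives $p(\ad_{JZ})=-J\circ p(\ad_Z)\circ J$ on $\m$. The computations are all elementary; the only real bookkeeping obstacle is in the middle equality of (1), where one must carefully track the $\m^\pm$-decomposition of each of the four double brackets before applying $J$, since $J$ acts by different scalars on the two eigenspaces.
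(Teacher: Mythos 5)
Your proof is correct, and it is exactly the ``simple computation'' the paper alludes to (the lemma is stated with no written proof beyond the remark that simple computations underlie it): expand everything in the eigenspace decomposition $\m=\m^+\oplus\m^-$, use $[\m^\pm,\m^\pm]=0$, $[\m^+,\m^-]\subseteq\k$, $[\k,\m^\pm]\subseteq\m^\pm$, and track the factors of $\pm i$. All the individual steps check out, including the sign bookkeeping in the iteration $\ad_{JZ}^{2n}=-J\circ\ad_Z^{2n}\circ J$ (valid also for $n=0$ since $J^2=-I$ on $\m$).
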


\begin{prop}\label{prop:squarescommute}

For all  $Z\in\m$,  $\ad_{JZ}^2$ and  $\ad_{Z}^2$ commute as endomorphisms of $\m$. 
\end{prop}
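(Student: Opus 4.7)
The plan is to use Lemma \ref{lemma:order2}(1) and the abelianness of $\m^\pm$ to reduce the commutator $[\ad_Z^2,\ad_{JZ}^2]$ on $\m$ to a specific bilinear vanishing $F\equiv 0$ on $\m^-\otimes\m^-$, and then to establish $F\equiv 0$ by a self-referential Jacobi argument driven by the nilpotency $\ad_{Z^\pm}^3 = 0$.

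First I would write $Z = Z^++Z^-$ with $Z^\pm\in\m^\pm$. Using Lemma \ref{lemma:order2}(1) together with the direct expansion $\ad_Z^2(W^++W^-) = \ad_{Z^+}\ad_{Z^-}W^+ + \ad_{Z^-}^2 W^+ + \ad_{Z^+}^2 W^- + \ad_{Z^-}\ad_{Z^+}W^-$ (which uses $[\m^\pm,\m^\pm]=0$), one sees that $\ad_Z^2$ and $\ad_{JZ}^2$ are block operators on $\m = \m^+\oplus\m^-$ sharing the same diagonal blocks ($\ad_{Z^+}\ad_{Z^-}$ on $\m^+$ and $\ad_{Z^-}\ad_{Z^+}$ on $\m^-$), while their off-diagonal blocks $\ad_{Z^\pm}^2$ appear with opposite signs.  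Multiplying these out, the diagonal of $[\ad_Z^2,\ad_{JZ}^2]$ vanishes identically, and using $[\ad_{Z^+},\ad_{Z^-}] = \ad_H$ with $H := [Z^+,Z^-]\in\k$, the remaining off-diagonal blocks reduce to the two identities $\ad_{Z^+}\ad_H\ad_{Z^+} = 0$ on $\m^-$ and $\ad_{Z^-}\ad_H\ad_{Z^-} = 0$ on $\m^+$.

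For the first identity: since $\ad_{Z^+}\ad_H\ad_{Z^+}W = [Z^+,[H,[Z^+,W]]]$ for $W\in\m^-$, one Jacobi expansion (using $[H,Z^+] = -\ad_{Z^+}^2 Z^-$) rewrites this expression as $F(Z^-,W)$, where
\[F(X,W) := [\ad_{Z^+}X, \ad_{Z^+}^2 W] + [\ad_{Z^+}^2 X, \ad_{Z^+}W] \quad (X,W\in\m^-)\]
is an $\m^+$-valued bilinear pairing on $\m^-$.  So it suffices to show $F\equiv 0$.  By the derivation property of $\ad_{Z^+}$, one immediately has $F(X,W) = \ad_{Z^+}[\ad_{Z^+}X, \ad_{Z^+}W]$.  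On the other hand, expanding $[[Z^+,X],[Z^+,W]]$ by Jacobi (using $[X,W]=0$), applying $\ad_{Z^+}$, and then applying one more round of Jacobi together with $\ad_{Z^+}^3 = 0$ (which holds because $\ad_{Z^+}$ cycles $\m^-\to\k\to\m^+\to 0$), one obtains the same quantity as $-2\,F(X,W)$.  Comparing the two expressions gives $3F(X,W)=0$, hence $F\equiv 0$.  The companion identity $\ad_{Z^-}\ad_H\ad_{Z^-}=0$ on $\m^+$ then follows verbatim by interchanging the roles of $Z^+$ and $Z^-$.

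The main obstacle is precisely the closed-loop derivation $F = -2F$.  A first pass through Jacobi tautologically recovers $F$ on the right-hand side --- this is unsurprising, since $F(W,X) = -F(X,W)$ by a single application of bracket antisymmetry --- so the sign flip that forces vanishing is only obtained after a careful second iteration, and it depends essentially on the nilpotency $\ad_{Z^+}^3 = 0$, not merely on the derivation property.  Once this technical heart is in place, the remainder of the proof is block-matrix bookkeeping.
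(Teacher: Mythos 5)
Your proof is correct and follows essentially the same route as the paper's: both arguments reduce the commutator to the vanishing of $\ad_{Z^\mp}\ad_{[Z^+,Z^-]}\ad_{Z^\mp}$ on $\m^\pm$ (after discarding the $\ad_{Z^\pm}^3$ terms), and both establish that vanishing by an iterated Jacobi computation in which the nilpotency $\ad_{Z^\pm}^3=0$ produces the relation ``$3\times(\text{the quantity})=0$.'' Your block-matrix bookkeeping and the bilinearization $F(X,W)$ are a slightly tidier packaging of the paper's direct expansion, but the key mechanism is identical.
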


\begin{proof}  
We repeatedly will use that $\ad_{[A,B]}=\ad_A\ad_B-\ad_B\ad_A$.  

Write $Z=Z^++Z^-$ and $JZ=i(Z^+-Z^-)$.   Substituting in the equation $\ad_Z^2\ad_{JZ}^2=\ad_{JZ}^2\ad_Z^2$, expanding, and eliminating terms with the zero-operators $\ad_{Z^+}^3$ or $\ad_{Z^-}^3$, we see that the proposition is equivalent to $0=\ad_{Z^+}^2\ad_{Z^-}\ad_{Z^+}-\ad_{Z^+}\ad_{Z^-}\ad_{Z^+}^2$ and $0=\ad_{Z^-}^2\ad_{Z^+}\ad_{Z^-}-\ad_{Z^-}\ad_{Z^+}\ad_{Z^-}^2$, and invoking symmetry, it is sufficient to prove the second equation.  The operator $\ad_{Z^-}^2\ad_{Z^+}\ad_{Z^-}-\ad_{Z^-}\ad_{Z^+}\ad_{Z^-}^2$ is zero on $\k\oplus\m^-$; we must prove that it is zero as a function from $\m^+$ to $\m^-$, which is to say, $\ad_{Z^-}\ad_{[Z^-,Z^+]}\ad_{Z^-}W^+=0$ for all $W^+\in \m^+$.  

We begin with $0=\ad_{Z^-}\ad_Z^+W^+$, valid since $\m^+$ is abelian.  Rewrite this equation as $0=\ad_{[Z^-,Z^+]}W^++\ad_{Z^+}\ad_{Z^-}W^+$.  Next apply $\ad_{Z^-}$; after using the Jacobi identity, we obtain the equation
\begin{equation} 0=\ad_{[Z^-,[Z^-,Z^+]]}W^++\ad_{Z^-}\ad_{Z^+}\ad_{Z^-}W^++\ad_{[Z^-,Z^+]}\ad_{Z^-}W^+\end{equation}  Again apply $\ad_{Z^-}$, yielding the equation
\begin{equation}0=\ad_{Z^-}\ad_{[Z^-,[Z^-,Z^+]]}W^++\ad_{Z^-}^2\ad_{Z^+}\ad_{Z^-}W^++\ad_{Z^-}\ad_{[Z^-,Z^+]}\ad_{Z^-}W^+\label{eq:ads}\end{equation} 
The first term on the righthand side is equal to $\ad_{[Z^-,[Z^-,Z^+]]}\ad_{Z^-}W^+$ (since $\ad_{Z^-}^3$ is the zero-operator) and can be rewritten as $\ad_{Z^-}\ad_{[Z^-,Z^+]}\ad_{Z^-}W^+-\ad_{[Z^-,Z^+]}\ad_{Z^-}^2W^+$.  Thus the right-hand side of \eqref{eq:ads} is \begin{equation}2\ad_{Z^-}\ad_{[Z^-,Z^+]}\ad_{Z^-}W^++\ad_{Z^-}^2\ad_{Z^+}\ad_{Z^-}W^+-\ad_{[Z^-,Z^+]}\ad_{Z^-}^2W^+\label{eq:ads2}\end{equation}  
The last two terms in \eqref{eq:ads2} sum to \begin{equation*} \ad_{Z^-}\ad_{Z^-}\ad_{Z^+}\ad_{Z^-}W^+-\ad_{Z^-}\ad_Z^+\ad_{Z^-}^2W^++\ad_{Z^+}\ad_{Z^-}^3W^+=\ad_{Z^-}\ad_{[Z^-,Z^+]}\ad_{Z^-}W^+\end{equation*}
Thus the right side of \eqref{eq:ads} is equal to $3 \ad_{Z^-}\ad_{[Z^-,Z^+]}\ad_{Z^-}W^+$, proving the proposition.
\end{proof}

\begin{cor}\label{cor:order2again}

If $p$ and $q$ have only  even-order terms, then as endomorphisms of $\m$, $p(\ad_{JZ})$ commutes with $q(\ad_Z)$.  \qed
\end{cor}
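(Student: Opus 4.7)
The plan is to reduce the corollary directly to Proposition \ref{prop:squarescommute}. Since $p(z)$ has only even-order terms, I can rewrite it as a power series in $z^2$: there is a formal power series $\tilde p$ with $p(z) = \tilde p(z^2)$. Applied to the operator $\ad_{JZ}$, this gives $p(\ad_{JZ}) = \tilde p(\ad_{JZ}^2)$ (the even powers $\ad_{JZ}^{2k}$ are just $(\ad_{JZ}^2)^k$). Similarly, $q(\ad_Z) = \tilde q(\ad_Z^2)$.

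The key input from Proposition \ref{prop:squarescommute} is that $\ad_{JZ}^2$ and $\ad_Z^2$ commute as endomorphisms of $\m$. Two commuting endomorphisms generate a commutative subalgebra of $\operatorname{End}(\m)$, so every polynomial in $\ad_{JZ}^2$ commutes with every polynomial in $\ad_Z^2$; passing to the power series limits (which are assumed to converge, as in the setup for Lemma \ref{lemma:order2}), we conclude that $\tilde p(\ad_{JZ}^2)$ commutes with $\tilde q(\ad_Z^2)$, i.e., $p(\ad_{JZ})$ commutes with $q(\ad_Z)$, as endomorphisms of $\m$.

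There is no real obstacle here: the content is entirely in the previous proposition. The only small point to check is that the reduction from $p$ to $\tilde p$ is compatible with applying the operator, but this is immediate term by term, and convergence of $\tilde p(\ad_{JZ}^2)$ follows from the assumed convergence of $p(\ad_{JZ})$.
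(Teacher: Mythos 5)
Your proof is correct and is exactly the argument the paper intends (the corollary is stated with an immediate \qed precisely because it reduces to Proposition \ref{prop:squarescommute} in the way you describe): write the even series as series in $\ad_{JZ}^2$ and $\ad_Z^2$ and use that commuting operators generate a commutative subalgebra, with the limit step being harmless on the finite-dimensional space $\m$. No further comment is needed.
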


Throughout this paper, we will apply Lemma \ref{lemma:order2}(2) and   Corollary \ref{cor:order2again} to the maps $E_Z$ and $S_Z$ defined below.

\begin{lemma}\label{lemma:projectionlemma} 
Let $T:\m\rightarrow\m$ be $\mathbb C$-linear (which is to say, $\R$-linear and commuting with multiplication by $i$).  The following are equivalent:
\begin{enumerate} 
\item $T$ commutes with $J$.
\item $T$ commutes with $\frac 12(I-iJ)$ (projection from $\m$ to $\m^+$).   
\item $T$ commutes with $\frac 12(I+iJ)$ (projection from $\m$ to $\m^-$). 
\item $T(\m^+)\subseteq \m^+$ and $T(\m^-)\subseteq \m^-$.
\end{enumerate}
\end{lemma}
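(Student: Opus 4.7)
The plan is to exploit the fact that, since $J = \ad_\Upsilon$ acts with eigenvalues $\pm i$ on $\m^\pm$, the two maps
$$ P^+ := \tfrac 12(I-iJ), \qquad P^- := \tfrac 12(I+iJ) $$
are genuinely the $\C$-linear projections of $\m$ onto $\m^+$ and $\m^-$ respectively: on $\m^+$ the operator $J$ acts as $i$, so $P^+$ is the identity and $P^-$ is zero, and symmetrically on $\m^-$. This reduces the whole lemma to some trivial algebra, so the real content is bookkeeping.

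For the equivalence of (1), (2), (3), I would observe that $T$ is assumed $\C$-linear, meaning it commutes with $iI$; it automatically commutes with $I$; so the $\C$-span of $\{I,J\}$ coincides with the $\C$-span of $\{I,iJ\} = \{I, P^+, P^-\}$. Hence $T$ commutes with any one of $J$, $P^+$, $P^-$ iff it commutes with all of them. This step requires nothing beyond writing $P^\pm$ in terms of $J$ and vice versa.

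For (2), (3) $\Leftrightarrow$ (4): if $T$ commutes with $P^+$, then for $v\in\m^+$ we have $Tv = TP^+v = P^+Tv \in \m^+$, so $T(\m^+)\subseteq \m^+$; the analogous argument with $P^-$ gives $T(\m^-)\subseteq \m^-$. Conversely, if (4) holds, then for any $v\in \m$ writing $v=v^++v^-$ with $v^\pm = P^\pm v$, both $Tv^+$ and $P^+Tv = P^+Tv^+ + P^+Tv^-$ equal $Tv^+$ (since $Tv^+\in \m^+$ and $Tv^-\in\m^-$), giving $TP^+ = P^+T$; similarly $TP^- = P^-T$.

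There is no substantive obstacle here: the only thing to be careful about is invoking $\C$-linearity of $T$ when passing between $J$ and $iJ$, and keeping straight that $P^\pm$ are genuine projections (not merely eigenprojectors up to scalar). The argument can be packaged as a short circular chain (1) $\Rightarrow$ (2) $\Rightarrow$ (4) $\Rightarrow$ (3) $\Rightarrow$ (1), or simply as the two observations above.
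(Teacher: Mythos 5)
Your proof is correct and follows essentially the same elementary route as the paper's: both rest on the observation that $\frac 12(I\mp iJ)$ are the eigenprojections of $\m$ onto $\m^{\pm}$, together with the $\C$-linearity of $T$. The only cosmetic difference is that you link (1)--(3) via the $\C$-span of $\{I,J\}$, whereas the paper reduces all three at once to the single condition that the $\m^-$-component of $T(W^+)$ equals the $\m^+$-component of $T(W^-)$; both routes are routine and complete.
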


\begin{proof}  Each of the conditions 1, 2, 3 is equivalent to the statement that for arbitrary $W=W^++W^-\in \m$, the $\m^-$ part of $T(W^+)$ equals the $\m^+$ part of $T(W^-)$.  If this condition holds, then by taking $W^-=0$ (resp. $W^+=0$), we obtain the two statements in condition 4.  On the other hand, condition 4 implies that   the $\m^-$ part of $T(W^+)$ equals the $\m^+$ part of $T(W^-)$ since both are zero.  
\end{proof}

\begin{lemma}\label{lemma:projectioncommutes}
Let $Z\in\m$, and  let $p$ be a series with only even order terms. If  $p(\ad_Z)$ and $p(\ad_{JZ})$ converge as operators on $\m$, then   
$p(\ad_Z)\circ p(\ad_{JZ}):\m\rightarrow\m$ satisfies the conditions of Lemma \ref{lemma:projectionlemma}.  
\end{lemma}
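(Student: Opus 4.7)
The plan is to verify condition (1) of Lemma \ref{lemma:projectionlemma}, namely that $T := p(\ad_Z) \circ p(\ad_{JZ})$ commutes with $J$ on $\m$. First I note that $T$ is indeed $\C$-linear, as a composition of two $\C$-linear operators: $\ad_Z$ and $\ad_{JZ}$ are $\C$-linear on $\g$ (the Lie bracket being $\C$-bilinear on the complex Lie algebra $\g$), and a power series in a $\C$-linear operator with complex coefficients is again $\C$-linear.

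The key step is to upgrade Lemma \ref{lemma:order2}(2) into a pair of intertwining relations between $J$ and the two operators $p(\ad_Z)$, $p(\ad_{JZ})$. Starting from $p(\ad_{JZ}) = -J \circ p(\ad_Z) \circ J$ and using $J^2 = -I$ on $\m$, a simple rearrangement (multiply by $J$ on the right and cancel $J^2 = -I$) yields $J \circ p(\ad_Z) = p(\ad_{JZ}) \circ J$. Applying Lemma \ref{lemma:order2}(2) with $Z$ replaced by $JZ$, and using the fact that $p(\ad_{-Z}) = p(\ad_Z)$ (which is where the evenness of $p$ enters), gives the companion relation $J \circ p(\ad_{JZ}) = p(\ad_Z) \circ J$.

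With these two intertwinings in hand, the verification that $JT = TJ$ becomes a three-line computation:
\[
JT \;=\; J\,p(\ad_Z)\,p(\ad_{JZ}) \;=\; p(\ad_{JZ})\,J\,p(\ad_{JZ}) \;=\; p(\ad_{JZ})\,p(\ad_Z)\,J,
\]
and then Corollary \ref{cor:order2again} allows me to swap the order of the two commuting even-order operators, producing $p(\ad_Z)\,p(\ad_{JZ})\,J = TJ$.

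I do not anticipate a real obstacle: the lemma is a bookkeeping consequence of Lemma \ref{lemma:order2}(2) and Corollary \ref{cor:order2again}, both already established. The only subtle point worth flagging is that both intertwining relations and the commutativity rely crucially on $p$ having only even-order terms; with odd terms present, sign errors would appear and $T$ would instead interchange $\m^+$ and $\m^-$ rather than preserve each.
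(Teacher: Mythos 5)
Your proof is correct and uses exactly the same two ingredients as the paper — Lemma \ref{lemma:order2}(2) and Corollary \ref{cor:order2again} — which the paper's one-line proof simply cites; you have merely written out the intertwining relations and the three-line computation explicitly. No gaps.
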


\begin{proof}  Using  Lemma \ref{lemma:order2}(2) and Corollary \ref{cor:order2again} we see that the operator commutes with $J$.  
\end{proof}

\subsection{The maps $E_Z$ and $F_Z$}\label{ss:EZFZ}

Beginning with the exponential map $\exp:\g\rightarrow G$ and with fixed $Z\in\g$, we have the linear map 
$d\exp_Z:\g\simeq T_Z\g\rightarrow T_{\exp Z}G$.  There exist linear endomorphisms of $\g$, denoted 
$E_{Z,l}$ and $E_{Z,r}$, such that $d\exp_Z=dl_{\exp Z}\circ E_{Z,l}=dr_{\exp Z}\circ E_{Z,r}$.  We regard 
$\exp\left(tE_{Z,l}(A)\right)$ as giving a first-order approximation of $\exp(-Z)\exp(Z+tA)$, and 
$\exp\left(tE_{Z,r}(A)\right)$ as giving a first-order approximation of $\exp(Z+tA)\exp(-Z)$.  It is clear that 
$E_{Z,r}=\Ad_{\exp Z}\circ E_{Z,l}$.  It is also well known that the power series expansions are 
$E_{Z,l}=\sum_{n=0}^\infty\frac{\left(-\ad_Z\right)^n}{(n+1)!}$ and $E_{Z,r}=\sum_{n=0}^\infty\frac{\left(\ad_Z\right)^n}{(n+1)!}$.  Considering the even and odd parts, we define $E_Z=\sum_{n=0}^{\infty}\frac{\left(\ad_Z\right)^{2n}}{(2n+1)!}$ and $F_Z=\sum_{n=1}^{\infty}\frac{\left(\ad_Z\right)^{2n-1}}{(2n)!}$; then 
$E_{Z,l}=E_Z-F_Z$ and $E_{Z,r}=E_Z+F_Z$.  
For $Z\in\m$, we can regard $E_Z$ as an endomorphism of $\m$.  

We note the geometric significance of $E_Z$ for the noncompact Hermitian symmetric space (the argument for the compact case being analogous).  Let $\Exp:\m_0\rightarrow G_0/K_0$ denote the exponential map at the base point of $G_0/K_0$ (in the sense of Riemannian geometry); then $d\Exp_Z:\m_0\simeq T_Z\m_0\rightarrow T_{\Exp Z}\left(G_0/K_0\right)$ satisfies
$d\Exp_Z=dl_{\exp Z}\circ E_Z$.


\subsection{A decomposition of $\Ad_{\exp Z}\Upsilon$}\label{ss:Ad}

\begin{lemma}\label{lemma:decomposition} 
Let $Z\in\m$.  
\begin{enumerate}
\item The $\k$-part of $\Ad_{\exp Z}\Upsilon$ is 
$$\proj{\g}{\k}\left(\Ad_{\exp Z}\Upsilon\right)=\frac 12\left(\Ad_{\exp Z}+\Ad_{\exp \sigma\theta Z}\right)\Upsilon
=\frac 12\left(\Ad_{\exp Z}+\Ad_{\exp (-Z)}\right)\Upsilon
=\left(\Upsilon-F_Z(JZ)\right)
$$
\item The $\m$-part of of $\Ad_{\exp Z}\Upsilon$ is 
$$\proj{\g}{\m}\left(\Ad_{\exp Z}\Upsilon\right)=\frac 12\left(\Ad_{\exp Z}-\Ad_{\exp \sigma\theta Z}\right)\Upsilon
=\frac 12\left(\Ad_{\exp Z}-\Ad_{\exp (-Z)}\right)\Upsilon
=-E_Z(JZ)
$$
\end{enumerate}
Suppose further that $Z\in\m_0$, so that $\Ad_{\exp Z}\Upsilon\in\g_0=\k_0\oplus\m_0$.    
Without loss of generality, assume that 
$Z=\sum c_\psi x_\psi\in\a_0$.  
\begin{enumerate}
\item The $\k_0$-part of $\Ad_{\exp Z}\Upsilon$ also can be described as  
$$\begin{aligned}\proj{\g_0}{\k_0}\left(\Ad_{\exp Z}\Upsilon\right)&=
\frac 12\left(\Ad_{\exp Z}+\Ad_{\exp \theta Z}\right)\Upsilon\\
&=\Upsilon'+\frac i2\sum\cosh(2c_\psi)h_\psi=\Upsilon+\sum\left(\cosh(2c_{\psi})-1\right)\Upsilon_{\psi}\end{aligned}$$
\item The $\m_0$-part of of $\Ad_{\exp Z}\Upsilon$ also can be described as   
$$
\proj{\g_0}{\m_0}\left(\Ad_{\exp Z}\Upsilon\right)
=\frac 12\left(\Ad_{\exp Z}-\Ad_{\exp \theta Z}\right)\Upsilon
=-\frac 12\sum\sinh(2c_\psi)y_\psi$$
\end{enumerate}

\end{lemma}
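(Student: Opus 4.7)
The plan is to derive the first two equalities of parts (1) and (2) from the involution $\sigma\theta$ (whose fixed set is $\k$), rewrite them in terms of $E_Z$ and $F_Z$ using the identity $\ad_Z\Upsilon=-JZ$, and then specialize to $Z\in\a_0$ for the explicit trigonometric formulas of (1') and (2').

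First, since $\g=\k\oplus\m$ is the $\pm 1$-eigenspace decomposition of $\sigma\theta$, and since $\Upsilon\in\k$ is $\sigma\theta$-fixed while $\sigma\theta Z=-Z$ for $Z\in\m$, applying $\sigma\theta$ as a Lie algebra automorphism sends $\Ad_{\exp Z}\Upsilon$ to $\Ad_{\exp(-Z)}\Upsilon$. The $\k$- and $\m$-parts of $\Ad_{\exp Z}\Upsilon$ therefore arise as the average and the half-difference of these two elements, yielding the first two equalities in (1) and (2). Writing $\Ad_{\exp Z}=\exp(\ad_Z)$, these become $\cosh(\ad_Z)\Upsilon$ and $\sinh(\ad_Z)\Upsilon$. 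To pass to the final expressions, I will use $\ad_Z\Upsilon=[Z,\Upsilon]=-[\Upsilon,Z]=-JZ$, which allows one power of $\ad_Z$ to be absorbed onto $JZ$. Matching the resulting series $\sum_{n\geq 1}\ad_Z^{2n-1}(JZ)/(2n)!$ with $F_Z(JZ)$ and $\sum_{n\geq 0}\ad_Z^{2n}(JZ)/(2n+1)!$ with $E_Z(JZ)$ delivers $\cosh(\ad_Z)\Upsilon=\Upsilon-F_Z(JZ)$ and $\sinh(\ad_Z)\Upsilon=-E_Z(JZ)$, completing (1) and (2).

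For the $\g_0$-statements, the same averaging argument using $\theta$ in place of $\sigma\theta$ works, because $\theta\Upsilon=\Upsilon$ and $\theta Z=-Z$ for $Z\in\m_0$; and since $\sigma$ fixes $\m_0$ pointwise, $\theta Z=\sigma\theta Z=-Z$, so the formulas in (1')/(2') for $\frac12(\Ad_{\exp Z}\pm\Ad_{\exp\theta Z})\Upsilon$ just restate those of (1)/(2). For the explicit closed forms, I reduce to $Z=\sum c_\psi x_\psi\in\a_0$; this is allowed since every element of $\m_0$ is $K_0$-conjugate into $\a_0$ and $\Upsilon$ is $K_0$-fixed, so a conjugation by $K_0$ does not alter the formulas. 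Using the strong orthogonality of $\Psi$, only commutators within a single $\g[\psi]$ contribute; with $[x_\psi,\Upsilon_\psi]=-y_\psi$ and $[x_\psi,y_\psi]=-4\Upsilon_\psi$, a one-line induction yields
\[
\ad_Z^{2n}\Upsilon=4^n\sum_\psi c_\psi^{2n}\Upsilon_\psi\ (n\ge 1),\qquad \ad_Z^{2n+1}\Upsilon=-4^n\sum_\psi c_\psi^{2n+1}y_\psi\ (n\ge 0).
\]
Inserting these into the $\cosh$ and $\sinh$ series and recognizing $\sum_{n\geq 0}(2c_\psi)^{2n}/(2n)!=\cosh(2c_\psi)$ and $\sum_{n\geq 0}(2c_\psi)^{2n+1}/(2n+1)!=\sinh(2c_\psi)$ produces $\Upsilon+\sum(\cosh(2c_\psi)-1)\Upsilon_\psi$ and $-\tfrac12\sum\sinh(2c_\psi)y_\psi$. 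A final cosmetic substitution using $\Upsilon=\Upsilon'+\sum\Upsilon_\psi$ and $\Upsilon_\psi=\tfrac{i}{2}h_\psi$ converts the first to $\Upsilon'+\tfrac{i}{2}\sum\cosh(2c_\psi)h_\psi$.

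No step is deep; the main obstacle is really the bookkeeping in the last paragraph, particularly making sure the powers of $4$ arising from $[x_\psi,y_\psi]=-4\Upsilon_\psi$ line up precisely with the $(2c_\psi)^n/n!$ expansions of the hyperbolic functions. Everything else is just two symmetry reductions and a routine identification of power series.
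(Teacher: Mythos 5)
Your proposal is correct and follows essentially the same route as the paper: split $\Ad_{\exp Z}=\exp(\ad_Z)$ into even and odd parts (which land in $\k$ and $\m$ respectively), absorb one factor via $\ad_Z\Upsilon=-JZ$ to recognize $F_Z(JZ)$ and $E_Z(JZ)$, and then evaluate the series on $Z\in\a_0$ using strong orthogonality and the $\g[\psi]\simeq sl_2$ relations. The paper's proof records only the power-series identity and leaves the rest implicit, so your explicit treatment of the eigenspace projections via $\sigma\theta$ and of the $\cosh/\sinh$ bookkeeping simply fills in details the author omitted.
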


\begin{proof}  
The essential calculation  is  $$\begin{aligned}\Ad_{\exp Z}\Upsilon&=\sum_{n=0}^\infty\frac{\ad_Z^{2n}}{(2n)!}\Upsilon+\sum_{n=0}^\infty\frac{\ad_Z^{2n+1}}{(2n+1)!}\Upsilon=\Upsilon+\sum_{n=1}^\infty\frac{\ad_Z^{2n-1}}{(2n)!}\ad_Z\Upsilon+\sum_{n=0}^\infty\frac{\ad_Z^{2n}}{(2n+1)!}\ad_Z\Upsilon\\&=\left(\Upsilon-F_Z(JZ)\right)-E_Z(JZ)\end{aligned}$$  
\end{proof} 


We shall need a variant of Lemma \ref{lemma:decomposition} for $2\times 2$ matrices, the proof of which is an easy computation:
\begin{lemma}\label{lemma:sl2}   
Let $X=\left(\begin{matrix}0&a\\b&0\end{matrix}\right)$ with $a,b\in\C^*$, and let $\sqrt{ab}$ denote either of the complex square roots of $ab$. 
\begin{enumerate}
\item $\exp(X)=\left(\begin{matrix} \cosh(\sqrt{ab})&\dfrac{a\sinh(\sqrt{ab})}{\sqrt{ab}}\\
\dfrac{b \sinh(\sqrt{ab})}{\sqrt{ab}}&\cosh(\sqrt{ab})\end{matrix}\right)$
\item $\Ad_{\exp X}\left(\begin{matrix}1&0\\0&-1\end{matrix}\right) = 
\left(\begin{matrix} \cosh(2\sqrt{ab})&-\dfrac{a\sinh(2\sqrt{ab})}{\sqrt{ab}}\\
\dfrac{b \sinh(2\sqrt{ab})}{\sqrt{ab}}&-\cosh(2\sqrt{ab})\end{matrix}\right)$ \qed
\end{enumerate}
\end{lemma}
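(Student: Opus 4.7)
The proof is a direct calculation built on the single observation that $X^2 = ab\,I$, where $I$ is the $2\times 2$ identity. From this, $X^{2n}=(ab)^n I$ and $X^{2n+1}=(ab)^n X$, so separating even and odd terms of the exponential series gives
\[
\exp(X)\;=\;I\sum_{n=0}^\infty\frac{(ab)^n}{(2n)!}\;+\;X\sum_{n=0}^\infty\frac{(ab)^n}{(2n+1)!}\;=\;\cosh\!\bigl(\sqrt{ab}\bigr)\,I\;+\;\frac{\sinh\!\bigl(\sqrt{ab}\bigr)}{\sqrt{ab}}\,X.
\]
Writing this out as a matrix yields statement (1). The choice of square root is immaterial because $\cosh(z)$ is even and $\sinh(z)/z$ is even, so each coefficient is well-defined.

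For statement (2), the plan is to use $\Ad_{\exp X}=\exp(\ad_X)$ acting on $H:=\bigl(\begin{smallmatrix}1&0\\0&-1\end{smallmatrix}\bigr)$. A short bracket computation gives $[X,H]=\bigl(\begin{smallmatrix}0&-2a\\2b&0\end{smallmatrix}\bigr)=:Y$ and then $[X,Y]=4ab\,H$. Consequently $\ad_X^{2n}(H)=(4ab)^n H$ and $\ad_X^{2n+1}(H)=(4ab)^n Y$, and the same even/odd splitting gives
\[
\Ad_{\exp X}(H)\;=\;\cosh\!\bigl(2\sqrt{ab}\bigr)\,H\;+\;\frac{\sinh\!\bigl(2\sqrt{ab}\bigr)}{2\sqrt{ab}}\,Y,
\]
which, after substituting the explicit forms of $H$ and $Y$, yields the matrix in (2). (Alternatively, one could multiply out $\exp(X)\,H\,\exp(-X)$ using part (1); either route is equally short.)

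There is no real obstacle in this lemma — it is purely a bookkeeping exercise in the power series of $\exp$. The only point worth flagging in writing is the remark that the ambiguity in $\sqrt{ab}$ is absorbed by the parity of $\cosh$ and of $\sinh(z)/z$, so the right-hand sides in (1) and (2) are genuinely well-defined functions of $X$.
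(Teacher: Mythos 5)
Your proof is correct; the paper itself gives no argument for this lemma beyond calling it ``an easy computation,'' and your calculation (using $X^2=ab\,I$ for part (1) and $\ad_X^2(H)=4ab\,H$ for part (2)) is exactly the standard computation being alluded to. The remark that the parity of $\cosh(z)$ and $\sinh(z)/z$ makes the square-root choice immaterial is a worthwhile detail to record.
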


\subsection{The map $S_Z$}\label{ss:SZ}
For $Z\in\m$, we define $S_Z=\frac 12\left(\Ad_{\exp Z}+\Ad_{\exp Z}^{-1}\right)$.  
We have already encountered this expression, having seen in Lemma \ref{lemma:decomposition}  that 
$\frac 12\left(\Ad_{\exp Z}+\Ad_{\exp Z}^{-1}\right)(\Upsilon)
=\left(\Upsilon-F_Z(JZ)\right)$.  However, in this section, we are interested in applying $S_Z$ to elements of $\m$.

Since $\Ad_{\exp Z}^{-1}=\Ad_{\exp(-Z)}$, $S_Z$  coincides with the even part of $\Ad_{\exp Z}$, so $S_Z=\sum_{n=0}^\infty\frac{\ad_Z^{2n}}{(2n)!}$.  If $Z\in\m_0$, we can  regard $S_Z$ as an $\R$-linear map from $\m_0$ to $\m_0$. 
We will see in Section \ref{s:eigen} that $\left.S_Z\right|_{\m_0}$ is invertible.  
\begin{lemma}\label{lemma:SZ}

Let $Z\in\m_0$, and consider $S_Z:\m_0\rightarrow\m_0$. 
 \begin{enumerate}
 \item $S_Z=\text{proj}^{\g_0}_{\m_0}\circ\Ad_{\exp Z}=\text{proj}^{\g_0}_{\m_0}\circ\Ad_{\exp (-Z)}$. 
\item $S_{JZ}^{-1}JS_{JZ}=S_ZJS_Z^{-1}$, or equivalently, 
$JS_{JZ}S_Z=S_{JZ}S_ZJ$.
\item $S_{JZ}\circ S_Z:\m\rightarrow\m$ commutes with the projections of $\m$ onto $\m^+$ and $\m^-$.  
\end{enumerate}
\end{lemma}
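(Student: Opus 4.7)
The plan is to reduce each part to the formal apparatus set up in \S\ref{ss:powerseries}, so the argument splits cleanly into three short steps.

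For (1), I would start from the defining formula $\Ad_{\exp Z}=\sum_{n\ge 0}\frac{\ad_Z^n}{n!}$ and appeal to the parity observation from the start of \S\ref{ss:powerseries}: with $Z\in\m_0$ and $W\in\m_0$, the even powers $\ad_Z^{2n}W$ lie in $\m_0$ and the odd powers $\ad_Z^{2n+1}W$ lie in $\k_0$. Projecting $\Ad_{\exp Z}W$ onto $\m_0$ therefore kills the odd part and returns precisely $\sum_{n\ge 0}\frac{\ad_Z^{2n}}{(2n)!}W=S_Z W$. Because $S_Z$ involves only even powers of $\ad_Z$, replacing $Z$ by $-Z$ leaves $S_Z$ unchanged; combined with $\Ad_{\exp(-Z)}=\Ad_{\exp Z}^{-1}$, this gives the second equality. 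This also makes it clear that $S_Z$ is $\C$-linear as an endomorphism of $\m$ (it is a power series in the $\C$-linear operator $\ad_Z$), a fact we will use in (3).

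For (2), the key is that both $S_Z$ and $S_{JZ}$ arise from the single even power series $s(z)=\sum_{n\ge 0}\frac{z^{2n}}{(2n)!}$. Lemma \ref{lemma:projectioncommutes} applied to $p=s$ gives that $s(\ad_Z)\circ s(\ad_{JZ})=S_Z\circ S_{JZ}$ commutes with $J$ on $\m$. By Corollary \ref{cor:order2again} (again with $p=q=s$), $S_Z$ and $S_{JZ}$ themselves commute, so $S_Z S_{JZ}=S_{JZ}S_Z$. Combining these two facts yields $J S_{JZ}S_Z=S_{JZ}S_Z J$, which is the second (equivalent) form of the assertion; conjugating by $S_Z$ and $S_{JZ}$ then produces $S_{JZ}^{-1}JS_{JZ}=S_Z J S_Z^{-1}$, once invertibility is invoked (the paper defers that to \S\ref{s:eigen}, which is legitimate since we are only asked for the equivalence of the two forms here).

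For (3), having established in (2) that the $\C$-linear endomorphism $S_{JZ}\circ S_Z$ of $\m$ commutes with $J$, I would simply invoke the equivalence \emph{(1)}$\Leftrightarrow$\emph{(2)}$\Leftrightarrow$\emph{(3)} of Lemma \ref{lemma:projectionlemma}: commuting with $J$ is the same as commuting with the two projections $\tfrac{1}{2}(I\mp iJ):\m\to\m^{\pm}$.

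I do not anticipate any serious obstacle; the lemma is really a bookkeeping consequence of the parity machinery. The only mild subtlety is keeping straight that $S_Z$ as an operator on $\m_0$ (needed for (1)) and as an operator on all of $\m$ (needed for (2) and (3)) are compatible, which follows because $\m=\m_0\oplus i\m_0$ and $S_Z$ has real coefficients.
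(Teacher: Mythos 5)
Your proof is correct and follows essentially the same route as the paper: part (1) is the even/odd parity observation (the paper phrases it as applying $\tfrac{1}{2}(I-\theta)$ to $\Ad_{\exp Z}W$, which is the same computation), and parts (2) and (3) rest on exactly Lemma \ref{lemma:order2}(2), Corollary \ref{cor:order2again}, and Lemmas \ref{lemma:projectionlemma}--\ref{lemma:projectioncommutes}, as in the paper. Your remarks on $\C$-linearity and on deferring invertibility of $S_Z$ to Section \ref{s:eigen} are consistent with the paper's treatment.
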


\begin{proof} For (1), give $W\in\m_0$, we have
$$\begin{aligned}\text{proj}^{\g_0}_{\m_0}\circ\Ad_{\exp Z}W&=\left(\frac{I-\theta}2\right)\left(\Ad_{\exp Z}W\right)=\frac 12\left(\Ad_{\exp Z}W-\Ad_{\exp(-Z)}(-W)\right)\\
&=
\frac 12\left(\Ad_{\exp Z}W+\Ad_{\exp(-Z)}W\right)
=S_ZW.
\end{aligned}$$
Part (2)  uses Lemma \ref{lemma:order2}(2) and Corollary \ref{cor:order2again}.     Part (3) is a special case of   Lemma 
\ref{lemma:projectioncommutes}(2).  
\end{proof}

The example of $SL(2)$ already shows that in general, $S_Z\neq S_{JZ}$.

\begin{lemma}\label{lemma:identzero}
Let $Z\in\m$. 
\begin{enumerate}
\item As a function with domain $\m$, $\ad_\Upsilon\circ\left(\Ad_{\exp Z}-\Ad_{\exp(-Z)}\right)$ is identically zero. 
\item As functions from $\m$ to $\m$, we have $S_ZJS_Z=\frac 12\ad_{\left(\left(\Ad_{\exp Z}+\Ad_{\exp(-Z)}\right)\Upsilon\right)}$. 
\end{enumerate}
\end{lemma}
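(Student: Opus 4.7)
The plan is to prove (1) by a direct structural argument on the $\k\oplus\m$ grading, and then derive (2) as a purely algebraic consequence of (1).

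For (1), I would expand $\Ad_{\exp Z}-\Ad_{\exp(-Z)}$ as a power series in $\ad_Z$: the even powers cancel and the odd powers double, yielding $2\sum_{n\ge 0}\ad_Z^{2n+1}/(2n+1)!$. For $Z\in\m$ and $W\in\m$, the bracket relations $[\m,\m]\subseteq\k$ and $[\m,\k]\subseteq\m$ imply that each term $\ad_Z^{2n+1}W$ lies in $\k$, so the whole image is contained in $\k$. Since $\Upsilon$ lies in the center of $\k_0$, and hence (by complexification) in the center of $\k$, $\ad_\Upsilon$ annihilates $\k$, which gives (1).

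For (2), set $A=\Ad_{\exp Z}$ and $B=A^{-1}=\Ad_{\exp(-Z)}$, and interpret $J$ as $\ad_\Upsilon$ extended to all of $\g$ (vanishing on $\k$, consistent with $\ad_\Upsilon|_\m=J$). Then $S_Z=\tfrac12(A+B)$, and the automorphism identity $A\ad_\Upsilon A^{-1}=\ad_{A\Upsilon}$ yields $AJB=\ad_{A\Upsilon}$ and $BJA=\ad_{B\Upsilon}$. A direct expansion gives the algebraic identity
\[
(A+B)J(A+B)-(A-B)J(A-B)=2(AJB+BJA)=2\ad_{(A+B)\Upsilon},
\]
so after dividing by four,
\[
S_ZJS_Z=\tfrac12\ad_{(A+B)\Upsilon}+\tfrac14(A-B)J(A-B).
\]
To finish, I would show the last term vanishes on $\m$: for $W\in\m$, $(A-B)W\in\k$ by the argument in (1), and then $J(A-B)W=\ad_\Upsilon((A-B)W)=0$ by (1), whence $(A-B)J(A-B)W=0$.

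The main obstacle, if any, is purely bookkeeping: one must be careful to interpret $J$ as $\ad_\Upsilon$ on all of $\g$, since in expanding $(A+B)J(A+B)$ the intermediate vectors $AW$ and $BW$ need not lie in $\m$. Once that convention is fixed, both parts of the lemma follow from the automorphism identity $A\ad_\Upsilon A^{-1}=\ad_{A\Upsilon}$ together with the central position of $\Upsilon$ in $\k$.
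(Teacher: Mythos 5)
Your proposal is correct and follows essentially the same route as the paper: part (2) rests on the automorphism identity $\Ad_g\circ\ad_\Upsilon\circ\Ad_g^{-1}=\ad_{\Ad_g\Upsilon}$ together with the vanishing of $(A-B)\circ\ad_\Upsilon\circ(A-B)$ on $\m$ via part (1), exactly as in the paper. The only cosmetic difference is in part (1), where you show the image of $\Ad_{\exp Z}-\Ad_{\exp(-Z)}$ lies in $\k$ by the odd-power-series argument (which is the paper's Lemma \ref{lemma:oddpart}) rather than by applying the involution $\sigma\theta$; the two are interchangeable.
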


\begin{proof}
If $A\in\m$, then $\sigma\theta\left(\left(\Ad_{\exp Z}-\Ad_{\exp(-Z)}\right)A\right)=\left(\Ad_{\exp(-Z)}-\Ad_{\exp Z}\right)(-A)$, showing that  $\left(\Ad_{\exp Z}-\Ad_{\exp(-Z)}\right)A\in\k$, whence (1). 

To prove (2), we take the difference of the two sides of the equation, apply to an arbitrary element $A\in\m$, and multiply by four, obtaining
\begin{eqnarray*}
&\left(\Ad_{\exp Z}+\Ad_{\exp(-Z)}\right)\circ\ad_\Upsilon\circ\left(\Ad_{\exp Z}+\Ad_{\exp(-Z)}\right)A
-2[\left(\Ad_{\exp Z}+\Ad_{\exp(-Z)}\right)\Upsilon,A]\\
&=\left(\Ad_{\exp Z}+\Ad_{\exp(-Z)}\right)\circ\ad_\Upsilon\circ\left(\Ad_{\exp Z}+\Ad_{\exp(-Z)}\right)A
-2\Ad_{\exp Z}\circ\ad_\Upsilon\circ\Ad_{\exp(-Z)}A\\&-2\Ad_{\exp(-Z)}\circ\ad_\Upsilon\circ\Ad_{\exp Z}A\\
&=\left(\Ad_{\exp Z}-\Ad_{\exp(-Z)}\right)\circ\ad_{\Upsilon}\circ\left(\Ad_{\exp Z}-\Ad_{\exp(-Z)}\right)A
\end{eqnarray*}
which is zero by (1). 
\end{proof}

We have seen that  $S_Z=\sum_{n=0}^\infty\frac{\ad_Z^{2n}}{(2n)!}=\ad_Z\circ F_Z + I$, and is  the even part of $\Ad_{\exp Z}$.  Analogously, we have:

\begin{lemma}\label{lemma:oddpart}
Let $Z\in \m$.  The odd part of $\Ad_{\exp Z}$ maps $\k$ to $\m$ and $\m$ to $\k$, and is given by 
$$\frac 12\left(\Ad_{\exp Z}-\Ad_{\exp(-Z)}\right)=\sum_{n=0}^\infty\frac{\ad_Z^{2n+1}}{(2n+1)!}=\ad_Z\circ E_Z\qed$$  
\end{lemma}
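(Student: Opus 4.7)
The proof is essentially an unwinding of the power series for $\exp(\ad_Z)$, so there is no serious obstacle; the three claims unpack directly.

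The plan is first to invoke the standard identity $\Ad_{\exp Z}=\exp(\ad_Z)$, which gives
$$\Ad_{\exp Z}=\sum_{n=0}^\infty \frac{\ad_Z^n}{n!}, \qquad \Ad_{\exp(-Z)}=\sum_{n=0}^\infty \frac{(-\ad_Z)^n}{n!}.$$
Subtracting and dividing by two annihilates the even-order terms and doubles the odd-order terms, which yields the middle equality
$$\tfrac12\bigl(\Ad_{\exp Z}-\Ad_{\exp(-Z)}\bigr)=\sum_{n=0}^\infty \frac{\ad_Z^{2n+1}}{(2n+1)!}.$$
This is by definition the odd part of $\Ad_{\exp Z}$, matching the decomposition already used in Section \ref{ss:EZFZ} to separate $E_Z$ and $F_Z$.

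Next, I would obtain the final equality $\sum_{n=0}^\infty \frac{\ad_Z^{2n+1}}{(2n+1)!}=\ad_Z\circ E_Z$ directly from the definition $E_Z=\sum_{n=0}^\infty \frac{\ad_Z^{2n}}{(2n+1)!}$ given in Section \ref{ss:EZFZ}: pulling an $\ad_Z$ out on the left shifts the exponent up by one in each term and reproduces the odd-part series. (The analogous identity $S_Z=\ad_Z\circ F_Z+I$ cited just before the lemma statement is proved by the same index-shifting observation.)

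Finally, for the assertion that the odd part exchanges $\k$ and $\m$: since $Z\in\m$, the bracket relations $[\k,\m]\subseteq\m$ and $[\m,\m]\subseteq\k$ recalled in Section \ref{s:notation} show that $\ad_Z$ itself sends $\k$ into $\m$ and $\m$ into $\k$. Hence every odd power $\ad_Z^{2n+1}$ has the same effect, and this property is inherited by the convergent sum. This completes the three claims of the lemma with no substantive computation.
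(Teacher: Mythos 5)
Your proof is correct and is exactly the argument the paper intends: the lemma is stated with an immediate \qed because it follows from expanding $\Ad_{\exp Z}=\exp(\ad_Z)$, comparing with the definition of $E_Z$, and the parity observation about $p_{odd}(\ad_Z)$ already recorded in Section \ref{ss:powerseries}. Nothing further is needed.
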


\subsection{The map $P$}\label{ss:P}

We define the holomorphic\footnote{with respect to the complex structure given by $i$} map $P:\m\rightarrow\m$ by  $Z\mapsto \ad_\Upsilon\Ad_{\exp Z}\Upsilon$.  

Since $K$ fixes $\Upsilon$ in the adjoint representation, $P$ is $K$-equivariant. 
Since $\Upsilon$ is fixed by $\sigma$ and $\theta$, $P$ commutes with $\sigma$ and $\theta$, so that 
$P$ restricts to (real analytic) maps from $\m_0$ to $\m_0$ and from $\m_u$ to $\m_u$.

Since $\ad_\Upsilon$ annihilates $\k$, we have that {\it $P(Z)$  coincides with 
$J$ applied to the $\m$-part of $Ad_{\exp Z}\Upsilon$. } Thus by Lemma \ref{lemma:decomposition}, 
$$P(Z)=J\proj{\g}{\m}\left(\Ad_{\exp Z}\Upsilon\right)=
\frac 12J\left(\Ad_{\exp Z}-\Ad_{\exp (-Z)}\right)\Upsilon
=-JE_Z(JZ)=E_{JZ}Z,$$
the last equation using  Lemma \ref{lemma:order2}. 
Additionally, if $Z=\sum c_\psi x_\psi\in\a_0$, by Lemma \ref{lemma:decomposition} we have 
$P(Z)
=\frac 12\sum\sinh(2c_\psi)x_\psi$.  
Since $\sinh:\R\rightarrow\R$ is bijective, the last expression shows that $\left. P\right|_{\a_0}:\a_0\rightarrow \a_0$ is a real analytic isomorphism, and $P$ maps $\a$ to $\a$ 
(a holomorphic map that is not bijective).

\begin{lemma}\label{lemma:PandJcommute}

For $P:\m\rightarrow\m$ as above, 
\begin{enumerate}
\item $P$ commutes with $J$. 
\item If $Z\in\m^+$ or $\m^-$, then $P(Z)=Z$. 
\item $P$ is an odd function. 
\end{enumerate}
\end{lemma}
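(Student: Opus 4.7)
The plan is to use the identity $P(Z)=-JE_Z(JZ)=E_{JZ}Z$, already established just before the lemma, together with the formal properties of $E_Z$ collected in Section~\ref{ss:powerseries}. All three parts then reduce to one-line computations.

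For part (1), the cleanest route is to use $P(Z)=E_{JZ}Z$ directly. Then
\[P(JZ)=E_{J(JZ)}(JZ)=E_{-Z}(JZ)=E_Z(JZ)\]
because $E_Z$ is an even power series in $\ad_Z$, so $E_{-Z}=E_Z$. On the other hand,
\[JP(Z)=JE_{JZ}(Z).\]
To identify these, apply Lemma~\ref{lemma:order2}(2) to the even power series $E$: as endomorphisms of $\m$, $E_{JZ}=-J\circ E_Z\circ J$. Substituting gives $JE_{JZ}(Z)=-J^2E_Z(JZ)=E_Z(JZ)$, using $J^2=-I$ on $\m$ (since $\ad_\Upsilon$ has eigenvalues $\pm i$ on $\m$). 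This matches $P(JZ)$.

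For part (2), suppose $Z\in\m^+$. Since $[\k,\m^+]\subseteq\m^+$ we have $JZ=\ad_\Upsilon Z\in\m^+$, and because $[\m^+,\m^+]=\{0\}$, we get $\ad_{JZ}Z=[JZ,Z]=0$. Hence every term $\ad_{JZ}^{2n}Z$ with $n\geq 1$ vanishes, so $E_{JZ}Z=Z$, giving $P(Z)=Z$. The argument for $Z\in\m^-$ is identical, using $[\m^-,\m^-]=\{0\}$.

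For part (3), simply note that $E_{JZ}$ consists only of even powers of $\ad_{JZ}$, so $E_{-JZ}=E_{JZ}$. Thus
\[P(-Z)=E_{J(-Z)}(-Z)=E_{-JZ}(-Z)=-E_{JZ}(Z)=-P(Z).\]
There is no real obstacle here; the only point requiring care is to cite Lemma~\ref{lemma:order2}(2) and $J^2=-I$ on $\m$ correctly in part (1). Everything else is immediate from the abelian property of $\m^\pm$ and the even-ness of the series $E$.
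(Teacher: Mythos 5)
Your proposal is correct and follows essentially the same route as the paper: part (1) is the identical chain $P(JZ)=E_{-Z}(JZ)=E_Z(JZ)=JE_{JZ}Z$ via Lemma \ref{lemma:order2}(2), and part (3) is the same evenness observation. In part (2) you invoke $[\m^\pm,\m^\pm]=\{0\}$ to kill $\ad_{JZ}Z$, whereas the paper writes $JZ=\pm iZ$ and notes the higher terms of the series vanish; these are trivially equivalent one-line computations.
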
 

\begin{proof}  For (1), from Lemma \ref{lemma:order2}(2) we have 
  $E_{JZ}=-JE_ZJ$, and then  
$P(JZ)=E_{-Z}JZ=E_ZJZ=JE_{JZ}Z=JP(Z)$.  For (2), we note that if $Z\in\m^{\pm}$, then 
$JZ=\pm iZ$, and then $P(Z)=E_{JZ}Z=E_{\pm iZ}Z=\sum_{n=0}^\infty\frac{\left(-\ad_Z^2\right)^n}{(2n+1)!}Z$, and the terms vanish for $n\geq 1$. Part (3) is trivial.  
\end{proof}

\begin{prop}\label{prop:Pinvertible}
$P:\m_0\rightarrow\m_0$ is invertible.  
\end{prop}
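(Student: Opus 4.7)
My plan is to exploit the $K_0$-equivariance of $P$ (which comes from $\Ad_K$ fixing $\Upsilon$) to reduce invertibility on all of $\m_0$ to the explicit bijection $P|_{\a_0}:\a_0\to\a_0$ just established --- namely, $\sum c_\psi x_\psi \mapsto \frac12\sum\sinh(2c_\psi)x_\psi$, a real analytic isomorphism since $\sinh:\R\to\R$ is. The two structural facts from Section~\ref{s:notation} I will lean on are that every element of $\m_0$ is $K_0$-conjugate to an element of $\a_0$, and that $K_0$-conjugacy within $\a_0$ is governed by the Weyl group $N_{K_0}(\a_0)/Z_{K_0}(\a_0)$ acting as signed permutations on $\{\pm x_\psi\}$.

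Surjectivity will be immediate: given $W\in\m_0$, write $W=\Ad_k(W_0)$ with $W_0\in\a_0$, let $Z_0\in\a_0$ be the unique preimage of $W_0$ under $P|_{\a_0}$, and use equivariance to conclude $P(\Ad_kZ_0)=W$. For injectivity, I would suppose $P(Z_1)=P(Z_2)=W$; after a common $K_0$-conjugation I may assume $Z_1\in\a_0$, so that $W\in\a_0$. Writing $Z_2=\Ad_k(A)$ with $A\in\a_0$ then gives $P(A)=\Ad_{k^{-1}}W\in\a_0$, so $P(A)$ and $W$ are $K_0$-conjugate within $\a_0$, hence Weyl-related: some $\tilde w\in N_{K_0}(\a_0)$ satisfies $\Ad_{\tilde w}P(A)=W$. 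Equivariance combined with injectivity of $P|_{\a_0}$ forces $\Ad_{\tilde w}A=Z_1$, so $Z_2=\Ad_gZ_1$ where $g:=k\tilde w^{-1}\in K_0$ also fixes $W$.

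The one nontrivial step --- and the main obstacle --- is the final move: concluding $\Ad_gZ_1=Z_1$, i.e., showing that the $K_0$-stabilizers of $Z_1$ and of $W=P(Z_1)$ coincide. I plan to deduce this from the standard restricted-root description of the centralizer of $A\in\a_0$, $Z_{\k_0}(A)=Z_{\k_0}(\a_0)\oplus\bigoplus_{\alpha\in\Phi(\g,\a),\,\alpha(A)=0}\left(\k_0\cap(\g_\alpha\oplus\g_{-\alpha})\right)$, which depends only on which restricted roots vanish on $A$. Since the restricted roots take the form $\pm\frac12\psi_s\pm\frac12\psi_t$ or $\pm\frac12\psi_t$, their vanishing at $A=\sum c_\psi x_\psi$ amounts to conditions of the form $c_{\psi_s}=\pm c_{\psi_t}$ or $c_{\psi_t}=0$; because $c\mapsto\frac12\sinh(2c)$ is an odd, strictly monotone bijection, each such condition holds at $Z_1$ iff it holds at $W$, giving $Z_{\k_0}(Z_1)=Z_{\k_0}(W)$. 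An analogous coefficient-pattern argument for the signed-permutation Weyl action handles any component group of the stabilizer, completing the proof.
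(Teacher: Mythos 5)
Your argument has exactly the same skeleton as the paper's proof: surjectivity from $K_0$-conjugacy into $\a_0$ plus surjectivity of $P|_{\a_0}$; injectivity by reducing, via the fact that the $K_0$-orbit of a point of $\a_0$ meets $\a_0$ in an $N_{K_0}(\a_0)$-orbit together with injectivity of $P|_{\a_0}$, to the single claim that an element of $K_0$ fixing $P(Z_1)$ must fix $Z_1$ (the paper's element $m=k^{-1}n$, your $g=k\tilde w^{-1}$). The two proofs part ways only in how that last claim is justified. The paper invokes a black-box isotropy statement --- two points of $\a_0$ with the same $N_{K_0}(\a_0)$-isotropy have the same $K_0$-isotropy --- derived from the Luna--Richardson theorem over $\C$ and descent to the compact real form (citing \cite{Br0}), and then checks equality of Weyl-group isotropy exactly as you do, using that $c\mapsto\frac 12\sinh(2c)$ is odd and bijective. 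You instead propose to compute the stabilizers directly. (Note also that the statement ``$K_0$-conjugacy within $\a_0$ is governed by the Weyl group'' is itself one of the facts the paper justifies via Luna--Richardson inside this proof; it is not literally stated in Section~\ref{s:notation}, though it is standard for compact symmetric pairs.)

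The one genuine gap is in your treatment of the component group. Your restricted-root computation correctly gives $Z_{\k_0}(Z_1)=Z_{\k_0}(P(Z_1))$, hence that the identity components of the two stabilizers agree, and your Weyl-group observation correctly gives $N_{K_0}(\a_0)\cap(K_0)_{Z_1}=N_{K_0}(\a_0)\cap(K_0)_{P(Z_1)}$. But these two facts only yield $(K_0)_{Z_1}=(K_0)_{P(Z_1)}$ if every connected component of the stabilizer $(K_0)_A$ of a point $A\in\a_0$ meets $N_{K_0}(\a_0)$, i.e., if $(K_0)_A=(K_0)_A^0\cdot\bigl(N_{K_0}(\a_0)\cap(K_0)_A\bigr)$. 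That is not ``an analogous coefficient-pattern argument''; it is a separate conjugacy statement. It is true, and can be supplied as follows: $\a_0$ is maximal abelian in $Z_{\m_0}(A)$, and $Z_{\g_0}(A)$ is a $\theta$-stable reductive subalgebra, so any $k\in(K_0)_A$ carries $\a_0$ to another maximal abelian subspace of $Z_{\m_0}(A)$, which the conjugacy theorem for the symmetric pair $(Z_{\g_0}(A),Z_{\k_0}(A))$ moves back to $\a_0$ by an element of $(K_0)_A^0$. With that supplement your route closes, and it has the merit of replacing the paper's appeal to Luna--Richardson for its background fact (3) by an explicit Lie-theoretic argument; without it, the final step ``$g$ fixes $W$ implies $g$ fixes $Z_1$'' is unproved.
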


\begin{proof}
We need three background facts: \begin{enumerate}
\item Each element of $\m_0$ is conjugate by $K_0$ to an element of $\a_0$. 
\item The intersection of the $K_0$-orbit of a point $A\in\a_0$ with $\a_0$ is a 
$N_{K_0}(\a_0)$-orbit. 
\item If two points in $\a_0$ have the same $N_{K_0}(\a_0)$-isotropy, then they have the 
same $K_0$-isotropy. 
\end{enumerate}

Item (1) already was stated in \S\ref{s:notation}.  We'll sketch a justification of (2) and (3).  By the Luna-Richardson Theorem, we have that the inclusion $\mathfrak a\subset\mathfrak m$ induces an isomorphism between $\quot{\mathfrak a}{N_K(\mathfrak a)}$ and $\quot{\mathfrak m}K$ (where the double slashes indicate spaces of closed orbits); moreover, the isomorphism carries  the $N_K(\mathfrak a)$-isotropy strata in $\quot{\mathfrak a}{N_K(\mathfrak a)}$ to the $K$-isotropy strata in $\quot{\mathfrak m}K$.  The isomorphism $\quot{\mathfrak a}{N_K(\mathfrak a)}\simeq\quot{\mathfrak m}K$ shows that 
the $K$-orbit of a point in $\a$ intersects $\a$ in an $N_K(\a)$-orbit;
the isotropy result shows that if two points in $\a$ have the same $N_K(\a)$-isotropy, then they have the same $K$-isotropy.  Note that (2) and (3) are versions of these statements over $\R$.  While the transition to actions of real groups can lead to unexpected phenomena, (2) and (3) follow due to special properties of {\em compact} real forms \cite{Br0}.  

We now proceed to the proof of Proposition \ref{prop:Pinvertible}.  
By (1) and the fact that $\left. P\right|_{\a_0}:\a_0\rightarrow\a_0$ is surjective, we have that $P$ is surjective. 

We prove that $P$ is injective.  Let $k\in K_0$ and $A,A'\in\a_0$.  We must show that if 
$P(A')=P(\Ad_k A)$, then $A'=\Ad_k A$. 

Since $P(A)\in\a_0$ and $\Ad_k P(A)=P(A')\in\a_0$, by (2), there exists $n\in N_{K_0}(\a_0)$ such that $\Ad_kP(A)=\Ad_nP(A)$.    Since $P(\Ad_nA)=P(A')$ and since $A,\Ad_nA\in\a_0$, we conclude that $\Ad_nA=A'$. 

Writing $m=k^{-1}n\in K_0$, our goal is to show that $\Ad_m A=A$, given that $\Ad_mP(A)=P(A)$.  By (3), it is enough to verify that $A$ and $P(A)$ have the same $N_{K_0}(\a_0)$-isotropy.  For this, we recall, that the Weyl group $N_{K_0}(\a_0)/Z_{K_0}(\a_0)$ acts on $\a_0=\span_{\R}\{x_\psi\}$ as signed permutations, and observe that for 
$A=\sum c_\psi x_\psi$ and $P(A)=\sum \frac{\sinh(2c_\psi)}2x_\psi$, 
we have $c_{\psi'}=c_\psi$ (resp. $-c_\psi$) if and only if 
$\frac{\sinh(2c_\psi')}2=\frac{\sinh(2c_\psi)}2$ (resp $-\frac{\sinh(2c_\psi)}2$), using that the hyperbolic sine function is bijective and odd.
\end{proof}

\begin{lemma}\label{lemma:differentialP}
The differential $dP_Z:\m_0\rightarrow\m_0$ is given by $S_{JZ}\circ E_Z$. 
\end{lemma}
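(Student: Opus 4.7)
The plan is to differentiate the defining formula $P(Z)=\ad_\Upsilon\Ad_{\exp Z}\Upsilon$ directly, holding $\Upsilon$ fixed and letting $Z$ move. The left-trivialized form of the differential of $\exp$ (which is precisely how $E_{Z,l}=E_Z-F_Z$ was defined) gives, for $A\in\m_0$,
$$\frac{d}{dt}\bigg|_{t=0}\Ad_{\exp(Z+tA)}\Upsilon \;=\; \Ad_{\exp Z}\circ\ad_{E_{Z,l}(A)}\Upsilon \;=\; -\Ad_{\exp Z}\,\ad_\Upsilon\bigl(E_{Z,l}(A)\bigr),$$
the second equality being antisymmetry of the bracket.

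The key observation, used twice in the argument, is that $\ad_\Upsilon$ annihilates $\k$ and restricts to $J$ on $\m$. First I would split $E_{Z,l}(A)=E_Z(A)-F_Z(A)$ into its even part $E_Z(A)\in\m$ and its odd part $F_Z(A)\in\k$; the $\k$-piece is annihilated, leaving $\ad_\Upsilon E_{Z,l}(A)=JE_Z(A)$. Applying $\ad_\Upsilon$ on the outside then gives
$$dP_Z(A) \;=\; -\ad_\Upsilon\,\Ad_{\exp Z}\bigl(JE_Z(A)\bigr).$$
Next I would invoke Lemma \ref{lemma:oddpart} to split $\Ad_{\exp Z}$ on $\m$ as its even part $S_Z$ (which keeps the vector in $\m$) plus its odd part $\ad_Z\circ E_Z$ (which lands in $\k$). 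The $\k$-component is killed by the outer $\ad_\Upsilon$; since $JE_Z(A)\in\m$, only the $S_Z$ contribution survives, yielding
$$dP_Z(A) \;=\; -\ad_\Upsilon\bigl(S_Z\,J\,E_Z(A)\bigr) \;=\; -J\circ S_Z\circ J\bigl(E_Z(A)\bigr).$$

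To finish, I would apply Lemma \ref{lemma:order2}(2) to the even power series defining $S$: as endomorphisms of $\m$, $S_{JZ}=-J\circ S_Z\circ J$. This converts the last expression into $S_{JZ}\bigl(E_Z(A)\bigr)$, which is exactly the claimed formula. The only subtlety I expect to be worth care is the bookkeeping of $\m$- versus $\k$-components at each stage, since the identity hinges entirely on the double use of the fact that $\Upsilon$ centralizes $\k$; no $\m^+/\m^-$ decomposition is required here because the desired equality is between $\R$-linear operators on $\m_0$ built intrinsically from $\ad_Z$ and $\ad_{JZ}$.
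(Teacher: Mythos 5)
Your argument is correct and follows the paper's proof essentially step for step: differentiate via $E_{Z,l}$, discard the $\k_0$-component $F_Z(A)$ under the inner $\ad_\Upsilon$, discard the odd ($\k$-valued) part of $\Ad_{\exp Z}$ under the outer $\ad_\Upsilon$ to leave $S_Z$, and finish with $S_{JZ}=-J\circ S_Z\circ J$ from Lemma \ref{lemma:order2}(2). The only cosmetic difference is that you cite Lemma \ref{lemma:oddpart} where the paper cites the projection formula of Lemma \ref{lemma:SZ}(1); these express the same fact.
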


\begin{proof}
Let $B\in\m_0$ and let $\sim$ denote the equivalence relation ``having the same tangent vector at $t=0$.''  We then have
$P(Z+tB)=\ad_\Upsilon\Ad_{\exp(Z+tB)}\Upsilon\sim\ad_\Upsilon\Ad_{\exp Z}\left(\left(I+t\,\ad_{E_{Z,l}B}\right)\Upsilon\right)$, so we are reduced to showing that 
$\ad_\Upsilon\circ \Ad_{\exp Z}\circ\ad_{E_{Z,l}B}\Upsilon=S_{JZ}E_ZB$. 
Now $E_{Z,l}B=E_ZB-F_ZB$, with $F_ZB\in\k_0$, hence 
$\ad_{E_{Z,l}B}\Upsilon=\ad_{E_ZB}\Upsilon=-J\circ E_Z(B)\in\m_0$. Finally, 
$\ad_\Upsilon\circ \Ad_{\exp Z}\circ\ad_{E_{Z,l}B}\Upsilon=
-\ad_\Upsilon\circ\Ad_{\exp Z}\circ J\circ E_Z(B)
=-J\circ\proj{\g_0}{\m_0}\Ad_{\exp Z}\circ J\circ E_Z(B)
=-J\circ S_Z\circ J\circ E_Z(B)=S_{JZ}\circ E_Z(B)$. 
\end{proof}

\subsection{The maps $t_\lambda$ and $T_\lambda$}\label{ss:t}

\begin{defn}\label{d:t}  
Let $\lambda\in\C^*$.  We define a $\C$-linear bijective endomorphism $t_\lambda$ of $\g$ by $t_\lambda$ equalling the identity map on $\k$, multiplication by $\lambda$ on $\m^+$, and multiplication by $\lambda^{-1}$ on $\m^-$. 
\end{defn}

\begin{lemma}\label{l:t}
\begin{enumerate}
\item For any $c\in\C$ with $e^{ci}=\lambda$, we have $t_\lambda=\Ad_{exp(c\Upsilon)}$. 
\item The map $t_\lambda$ is a Lie algebra isomorphism, and commutes with $\Ad_k$ for all $k\in K$. 
\item $t_\lambda(\m_0)=\{\lambda Z^++\lambda^{-1}\sigma Z^+:Z^+\in \m^+\}=\{Z^++\frac 1{|\lambda|^2}\sigma Z^+:Z^+\in\m^+\}$, and these equal the real form of $\m$ defined by the complex conjugation $Z^+\mapsto\frac 1{|\lambda|^2}\sigma Z^+$, $Z^-\mapsto |\lambda|^2\sigma Z^-$. 
\item If $Z\in\m$, then $\ad_{t_\lambda Z}^2=t_\lambda \circ\ad_Z^2 \circ t_{1/\lambda}$ as operators on $\m$. 
\item If $p$ is a power series with even order terms, then  $p(\ad_{t_\lambda Z})=t_\lambda\circ p(\ad_Z)\circ t_{1/\lambda}$ as operators on $\m$.  
\item With notation as in Definition \ref{d:stereo}, $I+i\lambda_3J=t_{\lambda}(\lambda_1-\lambda_2J)$ as operators on $\m$.  
\item If $A\in\m$, then $\theta(t_\lambda A)=t_{\overline\lambda}^{-1}\theta A$ and $\sigma(t_\lambda A)=t_{\overline\lambda}^{-1}\sigma A$.
\end{enumerate}
\end{lemma}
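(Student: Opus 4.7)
The plan is to leverage the identification of $t_\lambda$ as $\Ad$ of a specific element, then reduce everything else to direct checks on the eigenspaces $\k$, $\m^+$, $\m^-$ of $\ad_\Upsilon$.

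First, I would establish (1). Because $\ad_\Upsilon$ has eigenvalues $0$, $i$, $-i$ on $\k$, $\m^+$, $\m^-$ respectively, $\Ad_{\exp(c\Upsilon)} = \exp(c\ad_\Upsilon)$ acts as the scalars $1$, $e^{ci}=\lambda$, $e^{-ci}=\lambda^{-1}$ on these summands, which matches $t_\lambda$ by definition. Then (2) is immediate: $t_\lambda$ is inner, hence a Lie algebra automorphism, and since $\Upsilon$ lies in the center of $\k$, $\exp(c\Upsilon)$ is centralized by $K$, so $t_\lambda$ commutes with every $\Ad_k$.

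For (3), I would write an arbitrary $Z \in \m_0$ as $Z = X^+ + \sigma X^+$ with $X^+\in\m^+$, apply $t_\lambda$ to obtain $\lambda X^+ + \lambda^{-1}\sigma X^+$, which is the first description. For the second, substitute $Y^+:=\lambda X^+\in\m^+$ and use the antilinearity of $\sigma$ to rewrite $\lambda^{-1}\sigma X^+ = \lambda^{-1}\overline{\lambda^{-1}}\sigma Y^+ = |\lambda|^{-2}\sigma Y^+$. To identify this as the fixed-point set of the claimed conjugation $\tilde\sigma$, I would verify $\tilde\sigma$ is antilinear and involutive (the two scaling factors $|\lambda|^{-2}$ and $|\lambda|^2$ on $\m^+$ and $\m^-$ cancel under $\tilde\sigma^2$), then check that $Y^+ + |\lambda|^{-2}\sigma Y^+$ is fixed by $\tilde\sigma$.

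Parts (4) and (5) follow formally from (2): since $t_\lambda$ is a Lie algebra automorphism, $\ad_{t_\lambda Z} = t_\lambda\circ \ad_Z\circ t_\lambda^{-1} = t_\lambda\circ\ad_Z\circ t_{1/\lambda}$, so iterating gives $\ad_{t_\lambda Z}^n = t_\lambda\circ\ad_Z^n\circ t_{1/\lambda}$; summing the even-order series yields (5), and (4) is the $n=2$ case.

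For (6), I would verify the identity separately on $\m^+$ and $\m^-$. On $\m^+$, $J$ is multiplication by $i$, so the left side acts as the scalar $1-\lambda_3 = 2|\lambda|^2/(1+|\lambda|^2)$, and the right side acts as $\lambda(\lambda_1 - i\lambda_2) = \lambda\cdot 2\overline\lambda/(1+|\lambda|^2) = 2|\lambda|^2/(1+|\lambda|^2)$, using the stereographic formulas of Definition \ref{d:stereo}. The computation on $\m^-$ is analogous and gives the scalar $2/(1+|\lambda|^2)$ on both sides.

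Finally, for (7), writing $A = A^+ + A^-$ and using that $\theta$ is antilinear with $\theta(\m^\pm) = \m^\mp$, I have $\theta(t_\lambda A) = \overline\lambda\,\theta A^+ + \overline\lambda^{-1}\theta A^-$, where $\theta A^+ \in\m^-$ and $\theta A^-\in\m^+$. On the other hand, $t_{\overline\lambda}^{-1}$ scales $\m^+$ by $\overline\lambda^{-1}$ and $\m^-$ by $\overline\lambda$, so $t_{\overline\lambda}^{-1}\theta A$ yields exactly the same expression. The argument for $\sigma$ is identical. The only place any care is really needed is (3), where one must be mindful that $\sigma$ is antilinear so that the exchange $Y^+=\lambda X^+$ introduces $\overline\lambda$ rather than $\lambda$; the rest of the lemma is a bookkeeping exercise once (1) is in hand.
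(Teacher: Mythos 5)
Your proposal is correct and follows essentially the same route as the paper: part (1) via the eigenvalues of $\ad_{c\Upsilon}$ on $\k$, $\m^\pm$, and the remaining parts by direct checks on the $\ad_\Upsilon$-eigenspaces, with the same scalar computations $\frac{2|\lambda|^2}{1+|\lambda|^2}$ and $\frac{2}{1+|\lambda|^2}$ appearing in (6). The only variation is in (4)--(5), where you invoke the conjugation identity $\ad_{t_\lambda Z}=t_\lambda\circ\ad_Z\circ t_{1/\lambda}$ coming from $t_\lambda$ being an automorphism, whereas the paper expands both sides explicitly as $\lambda^2\ad_{Z^+}^2+\lambda^{-2}\ad_{Z^-}^2+\ad_{Z^+}\ad_{Z^-}+\ad_{Z^-}\ad_{Z^+}$; your version is slightly cleaner and equally valid.
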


\begin{proof}
For Part 1, note that $\Ad_{\exp(c\Upsilon)}=\exp(\ad_{c\Upsilon})$, and recall that $c\Upsilon$ acts via $\ad$ on $\m^+$, $\k$, and $\m^-$ with eigenvalues $ci$, $0$, and $-ci$, respectively.  Part 2 follows immediately from Part 1.  Part 3 is a simple computation.  Part 4 is proved by computing that both operators act as $\lambda^2\ad_{Z^+}^2+\frac 1{\lambda^2}\ad_{Z^-}^2+\ad_{Z^+}\ad_{Z^-}+\ad_{Z^-}\ad_{Z^+}$ on $\m$, and then Part 5 follows immediately.  Part 6 is proved by computing that the operators both act as multiplication by $\frac{2|\lambda|^2}{1+|\lambda|^2}$ on $\m^+$ and as multiplication by $\frac 2{1+|\lambda|^2}$ on $\m^-$.  Part 7 is an easy computation.  
\end{proof}

\begin{defn}\label{d:T}  
For $\lambda\in\C^*$, we define $T_\lambda:\O\rightarrow\O$ by the rule $\Ad_{g_u}\Ad_{\exp Z}\Upsilon\mapsto \Ad_{g_u}\Ad_{\exp t_\lambda Z}\Upsilon$. 
\end{defn}

It is easy to see that $T_\lambda$ is well-defined and $G_u$-equivariant. 

\begin{prop}\label{p:TBijection}  
$T_\lambda$ is a bijection. 
\end{prop}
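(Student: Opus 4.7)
My plan is to construct an explicit two-sided inverse. Define $T_\lambda^{-1}:\O\to\O$ by the analogous formula with $\lambda$ replaced by $1/\lambda$:
$$T_\lambda^{-1}(\Ad_{g_u}\Ad_{\exp Z}\Upsilon):=\Ad_{g_u}\Ad_{\exp t_{1/\lambda}Z}\Upsilon.$$
By the same argument that gives well-definedness of $T_\lambda$ (invoking Lemma \ref{l:t}(2), which yields $K_0$-equivariance of $t_{1/\lambda}$), this is well-defined. It then suffices to prove $T_\lambda^{-1}\circ T_\lambda=I$; the reverse composition follows by the symmetric argument with $\lambda$ and $1/\lambda$ interchanged.

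My first step is a key simplification via Lemma \ref{l:t}(1): $t_\lambda=\Ad_{\exp(c\Upsilon)}$ for any $c\in\C$ with $e^{ci}=\lambda$. Since $\Upsilon\in\k$, we have $\exp(\pm c\Upsilon)\in K$, and these stabilize $\Upsilon$ under $\Ad$. Hence
$$\Ad_{\exp(t_\lambda Z)}\Upsilon=\Ad_{\exp(c\Upsilon)\exp(Z)\exp(-c\Upsilon)}\Upsilon=\Ad_{\exp(c\Upsilon)\exp(Z)}\Upsilon,$$
so $T_\lambda(\Ad_{g_u\exp Z}\Upsilon)=\Ad_{g_u\exp(c\Upsilon)\exp Z}\Upsilon$ and analogously for $T_\lambda^{-1}$ with $-c$. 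Equivalently, under $\O\simeq G/K$, the map $T_\lambda$ is induced by $g_u\exp(Z)K\mapsto g_u\exp(c\Upsilon)\exp(Z)K$.

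Given $p=\Ad_{g_u\exp Z}\Upsilon$ and $q:=T_\lambda(p)=\Ad_{g_u\exp(c\Upsilon)\exp Z}\Upsilon$, the bijectivity of the Mostow-type parametrization from \S\ref{sss:OandOu} provides a unique class $[(\tilde g_u,\tilde Z)]\in G_u\times_{K_0}\m_0$ with $q=\Ad_{\tilde g_u\exp\tilde Z}\Upsilon$. Then $T_\lambda^{-1}(q)=\Ad_{\tilde g_u\exp(-c\Upsilon)\exp\tilde Z}\Upsilon$, and I must verify this equals $p$; equivalently, $\tilde g_u\exp(-c\Upsilon)\exp(\tilde Z)K=g_u\exp(Z)K$, given the defining relation $\tilde g_u\exp(\tilde Z)K=g_u\exp(c\Upsilon)\exp(Z)K$.

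The hard part will be this last implication, since by Lemma \ref{l:t}(3) the map $t_\lambda$ does not preserve $\m_0$, making the pair $(\tilde g_u,\tilde Z)$ depend intricately on $(g_u,Z,\lambda)$. My plan for handling it is a two-step reduction. First, using $G_u$-equivariance of both $T_\lambda$ and $T_\lambda^{-1}$, I reduce to $g_u=e$. Second, since every $Z\in\m_0$ is $K_0$-conjugate to some $\sum c_\psi x_\psi\in\a_0$ and $t_\lambda$ commutes with $\Ad_{K_0}$, I reduce to $Z\in\a_0$. Because the $sl_2$-triples $\g[\psi]$ commute for distinct $\psi\in\Psi$, the required identity then factors into independent rank-one statements in $SL(2,\C)$, each verifiable directly via the explicit matrix formulas of Lemma \ref{lemma:sl2}. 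The rank-one analysis determines $\tilde Z$ explicitly and exhibits the required symmetry under $c\mapsto -c$, while the commuting $sl_2$-structure guarantees the separate contributions reassemble consistently. This $\m_0$-versus-$i\m_0$ bookkeeping is precisely the technical issue foreshadowed in the introduction.
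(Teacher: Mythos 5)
Your overall strategy---exhibit an explicit inverse, reduce by $G_u$-equivariance and $K_0$-conjugacy to $Z\in\a_0$, and finish with rank-one computations via Lemma \ref{lemma:sl2}---is close in outline to what the paper does, and your observation that $T_\lambda$ is induced by $g_u\exp(Z)K\mapsto g_u\exp(c\Upsilon)\exp(Z)K$ is correct. But the proof fails at its starting point: $T_{1/\lambda}$ is \emph{not} the inverse of $T_\lambda$ unless $|\lambda|=1$, and the rank-one computation you defer to would refute rather than confirm your claim. Concretely, for $Z=c_\psi x_\psi\in\a_0$ the explicit formula (Theorem \ref{thm:Tlambdaformula}) shows that $T_\lambda$ replaces the coordinate $c_\psi$ by $r_\psi=\frac 12\,\mathrm{arcsinh}\!\left(\frac 12\left(|\lambda|^{-1}+|\lambda|\right)\sinh(2c_\psi)\right)$ (plus a compensating $G_u$-factor); applying the same recipe with $1/\lambda$ then yields $\frac 12\,\mathrm{arcsinh}\!\left(\frac 14\left(|\lambda|^{-1}+|\lambda|\right)^2\sinh(2c_\psi)\right)\neq c_\psi$ when $|\lambda|\neq 1$. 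Equivalently, in the model $\twist{G_u}{K_0}{\m_u}$ the fiber component of $T_\lambda$ is the linear map $W\mapsto\frac 12\left(t_\lambda+t_{1/\overline\lambda}\right)W$, which acts on $\m^+$ by the scalar $\frac 12\left(\lambda+1/\overline\lambda\right)$; composing with the corresponding map for $1/\lambda$ gives the scalar $\frac 14\left(\lambda+1/\overline\lambda\right)\left(1/\lambda+\overline\lambda\right)=\frac 14\left(|\lambda|+|\lambda|^{-1}\right)^2$, which rescales the ($K_0$-invariant) Killing norm of $W$ and so cannot be the identity. The underlying error is that $t_{1/\lambda}\circ t_\lambda=I$ on $\g$ does not transfer to $T_{1/\lambda}\circ T_\lambda=I$ on $\O$: writing $\exp(c\Upsilon)=\exp(\arg(\lambda)\Upsilon)\exp(-i\log|\lambda|\,\Upsilon)$, the second factor lies in $\exp(i\k_0)$ and sits on the wrong side of $\exp(Z)$ for the Kobayashi decomposition $G=G_u\exp(\m_0)\exp(i\k_0)$; moving it across distorts $Z$ nonlinearly in a way that does not cancel under $c\mapsto -c$. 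This is exactly the $\m_0$-versus-$i\m_0$ bookkeeping you flag as the hard part, and it is where the identity you need is false.

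For comparison, the paper does not look for an inverse at all: it first derives the explicit formula of Theorem \ref{thm:Tlambdaformula} by the $SL(2)$ reduction you describe, transports it by $\Phi$ to $\twist{G_u}{K_0}{\m_u}$, where $T_\lambda$ becomes $[g_u,W]\mapsto[g_u\cdot h(W),\frac 12(t_\lambda+t_{1/\overline\lambda})W]$ with $h(W)\in G_u$, and then observes that $\frac 12\left(t_\lambda+t_{1/\overline\lambda}\right):\m_u\rightarrow\m_u$ is an invertible linear map, from which bijectivity of the bundle map is immediate. If you insist on an inverse-function argument, you would have to solve for the correct $\mu$ with $\frac 12\left(\mu+1/\overline\mu\right)=\left(\frac 12\left(\lambda+1/\overline\lambda\right)\right)^{-1}$ on $\m^+$ and verify that the base components also cancel, which is no easier than deriving the explicit formula in the first place.
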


\begin{proof}
This will be restated below as Corollary \ref{cor:Tlambdabijection} and proved in Section \ref{s:proofs}, as a consequence of an explicit formula for $T_\lambda$ (Theorem \ref{thm:Tlambdaformula}).
\end{proof}

\subsection{The Killing form}\label{ss:killingform}

\begin{lemma}\label{lemma:killingform}

\begin{enumerate}
\item For all $A,B\in\g$, we have $\kappa([\Upsilon, A],B)=\kappa(\Upsilon,[A,B])=-\kappa(A,[\Upsilon,B])$. 
\item For all $A,B\in\m$, we have $\kappa(JA,B)=-\kappa(A,JB)$ and $\kappa(JA,JB)=\kappa(A,B)$. 
\item If $Z\in\m$, and if $p$ is a power series with only even order terms, such that $p(\ad_Z)$ converges as an operator on $\m$ (e.g., $p(\ad_Z)=E_Z$ or $S_Z$), then $\kappa\left(p(\ad_Z)A,B\right)=\kappa\left(A,p(\ad_Z)B\right)$. 
\item If $Z\in\m_0$, and $A,B\in\m$ are eigenvectors of $\ad_Z^2$ with eigenvalues $\nu_1\neq\nu_B$, then 
$\kappa(A,B)=0$. 
\item If $\lambda\in\C^*$ and $C,D\in\m$, then $\kappa(t_\lambda C, t_\lambda D)=\kappa (C,D)$.  
\item If $\lambda\in\C^*$ and $C,D\in\m_0$, then $\kappa(C,D)=2\Re(\kappa(C^+,D^-))$ and $\kappa(t_\lambda C, D)=\lambda\kappa(C^+,D^-)+\frac 1\lambda\overline{\kappa(C^+,D^-)}$.  
\end{enumerate}
\end{lemma}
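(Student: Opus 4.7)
The plan is to dispatch the six parts in order, leaning throughout on the standard invariance identity $\kappa([X,Y],Z) = \kappa(X,[Y,Z])$ for the Killing form and on the facts already established about the root-space decomposition: $\kappa$ vanishes identically on $\m^+$ and on $\m^-$, $\sigma$ exchanges $\m^+$ and $\m^-$, and $J=\ad_\Upsilon$ acts as $\pm i$ on $\m^\pm$.

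Part (1) is a direct rewrite of Killing-form invariance with $X=\Upsilon$. Part (2) follows: the first identity is a special case of (1) since $J=\ad_\Upsilon$ on $\m$; the second then comes from applying the first twice and using $J^2 = -I$ on $\m$ (since the eigenvalues of $J$ on $\m$ are $\pm i$). For part (3), invariance gives $\kappa(\ad_Z A, B) = -\kappa(A, \ad_Z B)$, so $\ad_Z$ is $\kappa$-skew, whence $\ad_Z^{2n}$ is $\kappa$-symmetric for every $n$; a termwise extension to any convergent even power series gives the claim, and the convergence of the operator on $\m$ is exactly what allows the interchange of sum and $\kappa$. Part (4) is an immediate corollary: applying (3) to $p(x)=x^2$, compute $\nu_A\kappa(A,B) = \kappa(\ad_Z^2 A,B) = \kappa(A,\ad_Z^2 B) = \nu_B \kappa(A,B)$, forcing $\kappa(A,B)=0$ when $\nu_A\ne\nu_B$.

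Part (5) is shortest of all: by Lemma \ref{l:t}(1) there exists $c\in\C$ with $t_\lambda = \Ad_{\exp(c\Upsilon)}$, and the Killing form is $\Ad$-invariant.

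Part (6) is the one requiring a bit of care and is the step I would flag as the main bookkeeping obstacle, since it mixes the real structure $\sigma$ with the complex scalar $\lambda$. Starting from $C,D\in\m_0$, one has $C^-=\sigma C^+$ and $D^-=\sigma D^+$ (from the $\m_0 = (I+\sigma)\m^+$ description recalled in Section \ref{s:notation}). Expanding $\kappa(C,D) = \kappa(C^++C^-, D^++D^-)$ and discarding the two terms that lie within $\m^+$ or within $\m^-$ leaves $\kappa(C^+,D^-)+\kappa(C^-,D^+)$; using that $\sigma$ is antiholomorphic and preserves the Killing form in the conjugate sense $\kappa(\sigma A,\sigma B)=\overline{\kappa(A,B)}$ converts the second summand to $\overline{\kappa(C^+,D^-)}$, giving $2\Re\kappa(C^+,D^-)$. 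The same expansion applied to $t_\lambda C = \lambda C^+ + \lambda^{-1}C^-$ (noting $t_\lambda$ is $\C$-linear, hence $\lambda$ appears not $\overline\lambda$) yields $\lambda\kappa(C^+,D^-) + \lambda^{-1}\kappa(C^-,D^+) = \lambda\kappa(C^+,D^-) + \lambda^{-1}\overline{\kappa(C^+,D^-)}$, which is the stated formula.
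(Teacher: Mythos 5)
Your proof is correct and follows essentially the same route as the paper's: invariance of $\kappa$ for (1)--(4), the identification $t_\lambda=\Ad_{\exp(c\Upsilon)}$ for (5), and the decomposition $C=C^++\sigma C^+$, $D=D^-+\sigma D^-$ together with the isotropy of $\m^\pm$ for (6). Your explicit use of $\kappa(\sigma A,\sigma B)=\overline{\kappa(A,B)}$ in part (6) just makes precise a step the paper leaves implicit.
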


\begin{proof}
(1) is the associativity of the Killing form, and (2) follows since $J=\ad_\Upsilon$ on $\m$ and $J^2=-I$. 
The proof of (3) reduces to the case $\kappa\left(ad_Z^2A,B\right)=\kappa\left(A,\ad_Z^2B\right)$, which follows from $\kappa\left(\ad_ZA,B\right)=-\kappa\left(A,\ad_ZB\right)$. Part (4) follows easily from (3).  Part (5) follows immediately from Lemma \ref{l:t}(1), or by decomposing $t_\lambda C$ and $t_\lambda D$ into their $\m^+$ and $\m^-$ parts and using that $\m^+$ is orthogonal to $\m^-$ under the Killing form.  Part (6) is proved using the fact that $C=C^++\sigma(C^+)$, $D=D^-+\sigma(D^-)$, and the orthogonality of $\m^+$ and $\m^-$.  
\end{proof}


\section{Eigenvalues and eigenvectors}\label{s:eigen}   
(Beyond Theorem \ref{theorem:eigenvaluetheorem}, this section is not needed in the sequel.)

The data for the hyperk\"ahler structure (\S \ref{s:quaternionic}, \S \ref{s:hyper}) rely on the maps $E_Z$,  $E_{JZ}$, $S_Z$, and $S_{JZ}$,  for $Z\in\m_0$.  Since these are power series in $\ad_Z^2$ or $\ad_{JZ}^2$, one can understand their actions on $\m_0$ once one understands the eigenvalues and eigenvectors of $\ad_Z^2$ and $\ad_{JZ}^2$ on $\m_0$.  The computations in this section address that question.

From Proposition  \ref{prop:squarescommute}, we conclude that 
$\ad_Z^2$ and $\ad_{JZ}^2$ are {\em simultaneously} diagonalizable on $\m_0$ (with nonnegative real eigenvalues), and then  $E_Z$,  $E_{JZ}$, $S_Z$, and $S_{JZ}$,  are simultaneously diagonalizable.

From 
$\ad_{JZ}^2=-J\ad_Z^2J$, it is easy to see that:

\begin{lemma}\label{lemma:eigenvaluepairs}
If $Z, A\in \m_0$, and if $\ad_Z^2A=\nu_1A$ and $\ad_{JZ}^2A=\nu_2A$, then 
$\ad_{Z}^2(JA)=\nu_2JA$ and $\ad_{JZ}^2(JA)=\nu_1JA$. \qed
\end{lemma}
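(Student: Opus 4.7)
The plan is to deduce both statements directly from the intertwining identity $\ad_{JZ}^2 = -J\,\ad_Z^2\,J$ on $\m$, which is precisely Lemma \ref{lemma:order2}(1). The proof will be short: essentially two one-line manipulations, with the second equality obtainable either by a direct substitution or, more elegantly, by invoking symmetry.

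First I would establish the statement $\ad_Z^2(JA) = \nu_2 JA$. Applied with $W = A$, Lemma \ref{lemma:order2}(1) gives
\[
\nu_2 A \;=\; \ad_{JZ}^2 A \;=\; -J\bigl(\ad_Z^2(JA)\bigr).
\]
Apply $J$ to both sides and use $J^2 = -I$ on $\m$ to solve for $\ad_Z^2(JA)$, obtaining $\ad_Z^2(JA) = \nu_2\, JA$, as desired.

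For the second equality, $\ad_{JZ}^2(JA) = \nu_1 JA$, I would note that the hypotheses are symmetric under the substitution $Z \leftrightarrow JZ$, provided that $\ad_{J(JZ)}^2 = \ad_Z^2$; and indeed $J(JZ) = -Z$, so $\ad_{-Z}^2 = \ad_Z^2$. Thus the role of $\nu_1$ and $\nu_2$ is interchanged when $Z$ is replaced by $JZ$, and the first part of the lemma, applied to $JZ$ in place of $Z$, yields $\ad_{JZ}^2(JA) = \nu_1 JA$. Alternatively, one can substitute $W = JA$ directly into Lemma \ref{lemma:order2}(1) and use $J^2 = -I$ together with $\ad_Z^2 A = \nu_1 A$ to obtain the same conclusion.

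There is no real obstacle here; the only thing to be careful about is that $J$ denotes $\ad_\Upsilon$ restricted to $\m$, so $J^2 = -I$ and the identities of Lemma \ref{lemma:order2} apply without any sign or domain issues. Everything else is a bookkeeping computation using that identity.
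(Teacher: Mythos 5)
Your argument is correct and is exactly the paper's: the paper derives the lemma immediately from the identity $\ad_{JZ}^2=-J\ad_Z^2J$ on $\m$ (Lemma \ref{lemma:order2}), which is precisely the manipulation you carry out. Nothing further is needed.
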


With the notation of Lemma \ref{lemma:eigenvaluepairs}, using the power series for $E_Z$, $E_{JZ}$, $S_Z$, and $S_{JZ}$, we have 

\begin{theorem}\label{theorem:eigenvaluetheorem}
$A$ is an eigenvector   with eigenvalues $\frac{\sinh\sqrt{\nu_1}}{\sqrt{\nu_1}}$, $\frac{\sinh\sqrt{\nu_2}}{\sqrt{\nu_2}}$, $\cosh\sqrt{\nu_1}$, and $\cosh\sqrt{\nu_2}$ respectively, and $JA$ is an eigenvector with eigenvalues   
$\frac{\sinh\sqrt{\nu_2}}{\sqrt{\nu_2}}$, $\frac{\sinh\sqrt{\nu_1}}{\sqrt{\nu_1}}$, $\cosh\sqrt{\nu_2}$, and $\cosh\sqrt{\nu_1}$ respectively.\footnote{In particular, note that if $\nu_1=0$, then $E_Z(A)=A=S_Z(A)$.}  \qed
\end{theorem}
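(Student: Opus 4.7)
The plan is to proceed directly by substituting $A$ into the defining power series for $E_Z$, $S_Z$, $E_{JZ}$, and $S_{JZ}$, using the fact that $A$ is an eigenvector of $\ad_Z^2$ and $\ad_{JZ}^2$ to reduce each series to a scalar series that sums to $\sinh$ or $\cosh$.

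First I would write out the relevant power series recalled in Section \ref{s:computation}: $E_Z=\sum_{n=0}^{\infty}\frac{\ad_Z^{2n}}{(2n+1)!}$ and $S_Z=\sum_{n=0}^{\infty}\frac{\ad_Z^{2n}}{(2n)!}$, with the analogous formulas for $E_{JZ}$ and $S_{JZ}$. Since $\ad_Z^2A=\nu_1A$, an immediate induction gives $\ad_Z^{2n}A=\nu_1^nA$ for all $n\geq 0$. Substituting into the series yields
\[
E_Z(A)=\left(\sum_{n=0}^{\infty}\frac{\nu_1^n}{(2n+1)!}\right)A=\frac{\sinh\sqrt{\nu_1}}{\sqrt{\nu_1}}A,\qquad S_Z(A)=\left(\sum_{n=0}^{\infty}\frac{\nu_1^n}{(2n)!}\right)A=\cosh\sqrt{\nu_1}\,A.
\]
The identical calculation with $JZ$ in place of $Z$ and $\nu_2$ in place of $\nu_1$ gives $E_{JZ}(A)=\frac{\sinh\sqrt{\nu_2}}{\sqrt{\nu_2}}A$ and $S_{JZ}(A)=\cosh\sqrt{\nu_2}\,A$.

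For $JA$, I would invoke Lemma \ref{lemma:eigenvaluepairs}, which gives $\ad_Z^2(JA)=\nu_2(JA)$ and $\ad_{JZ}^2(JA)=\nu_1(JA)$. The same power-series argument then produces the asserted eigenvalues for $JA$, with the roles of $\nu_1$ and $\nu_2$ swapped throughout.

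The only subtlety worth flagging is well-definedness: although $\sqrt{\nu_i}$ has two complex choices, both $\cosh\sqrt{\nu_i}$ and $\frac{\sinh\sqrt{\nu_i}}{\sqrt{\nu_i}}$ depend only on $\nu_i$ (since $\cosh$ is even and $\sinh(z)/z$ is even as well), and in the boundary case $\nu_i=0$ these expressions are understood in the limiting sense, reducing to the footnote's observation $E_Z(A)=A=S_Z(A)$. Since the argument is a termwise identification of convergent power series, there is no genuine obstacle; this is really a bookkeeping step made possible by the commutation result of Proposition \ref{prop:squarescommute}, which guaranteed simultaneous diagonalization in the first place.
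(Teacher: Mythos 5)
Your proposal is correct and is exactly the argument the paper intends: the theorem is stated as an immediate consequence of substituting the simultaneous eigenvector into the power series for $E_Z$, $S_Z$, $E_{JZ}$, $S_{JZ}$ and invoking Lemma \ref{lemma:eigenvaluepairs} for $JA$. Your remarks on the well-definedness of $\cosh\sqrt{\nu}$ and $\sinh\sqrt{\nu}/\sqrt{\nu}$ and the $\nu=0$ case are accurate bookkeeping consistent with the paper's footnote.
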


We see that $\m_0$ decomposes into $J$-stable subspaces on which the actions of $E_Z$, $E_{JZ}$, $S_Z$, and $S_{JZ}$ are easily understood, once we 
obtain the eigenvalues and eigenvectors of $\ad_Z^2$ and $\ad_{JZ}^2$ on $\m_0$.  That is the remaining goal of this section, culminating in Theorem \ref{thm:EigenvalueTheorem} and Corollary \ref{cor:EigenvalueCorollary}.  We  will see that these are expressed easily in terms of restricted roots.  More precisely, we  will describe the $J$-complex subspace $(\m_0)_{\{\nu_1,\nu_2\}}$ of $\m_0$ which is the direct sum of two (real) subspaces, namely, the first being the vectors on which  $\ad_Z^2=\nu_1$ and $\ad_{JZ}^2=\nu_2$, the second being the vectors on which $\ad_Z^2=\nu_2$ and $\ad_{JZ}^2=\nu_1$.  (Each of these subspaces is a real form of $(\m_0)_{\{\nu_1,\nu_2\}}$ relative to $\C=\R\oplus\R J$, and each is $J$ times the other.)  We also will identify the eigenvalue pairs $\{\nu_1,\nu_2\}$ that occur.  

Using the notation from \S\ref{s:notation}, we may assume that $Z=\sum\lambda_{\psi} x_\psi$ and $JZ=\sum \lambda_\psi y_\psi$, where the sums are over the maximal collection of strongly orthogonal positive $\m$-roots $\{\psi\}$.  We write $Z=(I+\sigma)Z^+$ and $A=(I+\sigma)A^+$.  By expanding $\ad_Z^2$ and $\ad_{JZ}^2$,  it is easily seen that the system of equations $\{\ad_Z^2 A=\nu_1 A, \ad_{JZ}^2 A=\nu_2 A\}$ is equivalent to the system
$$\ad_{Z^+}\ad_{\sigma Z^+}A^+=\frac{\nu_1+\nu_2}2 A^+\qquad\qquad(\ad_{Z^+}^2\circ\sigma) A^+=\frac{\nu_1-\nu_2}2 A^+$$
We shall consider the slightly simpler system
$$\ad_{Z^+}\ad_{\sigma Z^+}A^+=\frac{\nu_1+\nu_2}2 A^+  \qquad\qquad\ad_{Z^+}^2 \ad_{\sigma Z^+}^2 A^+=\left(\frac{\nu_1-\nu_2}2\right)^2 A^+$$
where the second equation is obtained by applying $\ad_{Z^+}^2\circ\sigma$ twice to $A^+$; we see that $A^+$ is a solution to this system exactly when $A\in(\m_0)_{\{\nu_1,\nu_2\}}$, and will write $A^+\in\m^+_{\{\nu_1,\nu_2\}}$.

Write $A^+=\sum A_\alpha$, where $\alpha$ runs through the positive restricted $\m$-roots.  
     
Examining the first equation in the system, we have that $$\ad_{Z^+}\ad_{\sigma Z^+} A^+=\ad_{[Z^+,\sigma Z^+]}A^+=\ad_{\sum \lambda_\psi^2 h_\psi}A^+=\sum_{\alpha}\ad_{\sum \lambda_\psi^2 h_\psi}A_\alpha=\sum_\alpha\alpha\left(\sum \lambda_\psi^2 h_\psi\right)A_\alpha$$
So the first equation in the system holds exactly when $\alpha\left(\sum \lambda_\psi^2 h_\psi\right)=\frac{\nu_1+\nu_2}2$ whenever $A_\alpha\neq 0$.  

Turning to the second equation in the system, we compute that 
$$\begin{aligned} \ad_{Z^+}^2\ad_{\sigma Z^+}^2 A^+&=\ad_{Z^+}\left(\ad_{[Z^+,\sigma Z^+]}+\ad_{\sigma Z^+}\ad_{Z^+}\right)\ad_{\sigma Z^+}A^+\\&=\ad_{Z^+}\ad_{[Z^+,\sigma Z^+]}\ad_{\sigma Z^+}A^++\left(\ad_{Z^+}\ad_{\sigma Z^+}\right)^2A^+\\
&=\ad_{Z^+}\ad_{[Z^+,\sigma Z^+]}\ad_{\sigma Z^+}A^++\left(\frac{\nu_1+\nu_2}2\right)^2 A^+\\
\end{aligned}$$
where the last step follows from the first equation in the system.  Thus we may replace the second equation in the system with 
$$\ad_{Z^+}\ad_{[Z^+,\sigma Z^+]}\ad_{\sigma Z^+}A^+=\left(\left(\frac{\nu_1-\nu_2}2\right)^2-\left(\frac{\nu_1+\nu_2}2\right)^2\right)A^+=-\nu_1\nu_2 A^+$$
We can express the left side of this equation in terms of roots, as follows.  (Sums with indices $\psi$ and $\gamma$ are over the maximal set of strongly orthogonal positive $\m$-roots.)

$$\begin{aligned}\ad_{Z^+}\ad_{[Z^+,\sigma Z^+]}\ad_{\sigma Z^+}A^+
&=\left(\ad_{\sum_\psi \lambda_\psi e_\psi}\right)\circ\left(\ad_{\sum_\gamma \lambda_\gamma^2 h_\gamma}\right)\circ\left(\ad_{\sum_\psi \lambda_\psi f_\psi}\right)\left(\sum_\alpha A_\alpha\right)\\
&=\left(\ad_{\sum_\psi \lambda_\psi e_\psi}\right)\circ\left(\ad_{\sum_\gamma \lambda_\gamma^2 h_\gamma}\right)\circ\left(\sum_\alpha\left(\sum_\psi \lambda_\psi[f_\psi,A_\alpha]\right)\right)\\
&=\left(\ad_{\sum_\psi \lambda_\psi e_\psi}\right)\circ\left(\sum_\alpha\sum_\psi \lambda_\psi \cdot (\alpha-\psi)\left(\sum_\gamma \lambda_\gamma^2 h_\gamma\right)\cdot [f_\psi,A_\alpha]\right)\\
&=\left(\ad_{\sum_\psi \lambda_\psi e_\psi}\right)\circ\left(\sum_\alpha \sum_\psi \left(\frac{\nu_1+\nu_2}2 -2\lambda_\psi^2\right)\cdot \lambda_\psi\cdot [f_\psi,A_\alpha]\right)\\
&=\sum_\alpha\sum_\psi\left(\frac{\nu_1+\nu_2}2-2\lambda_\psi^2\right)\cdot\lambda_\psi^2\cdot [h_\psi,A_\alpha]\\&\phantom{.}  \qquad \text{(since $[e_\gamma,[f_\psi,A_\alpha]]=[[e_\gamma,f_\psi],A_\alpha]=\delta_{\gamma,\psi}[h_\psi,A_\alpha]$)}\\
&=\sum_\alpha\left(\left(\frac{\nu_1+\nu_2}2\right)\cdot \alpha\left(\sum_\psi \lambda_\psi^2 h_\psi\right)-2\alpha\left(\sum_\psi \lambda_\psi^4 h_\psi\right)\right)A_\alpha\\
&=\sum_\alpha\left(\left(\frac{\nu_1+\nu_2}2\right)^2-2\alpha\left(\sum_\psi \lambda_\psi^4 h_\psi \right)\right)A_\alpha
\end{aligned}$$
This expression equals $-\nu_1\nu_2 A^+$ if and only if, for each $\alpha$ with $A_\alpha\neq 0$, we have $\alpha(\sum_\psi \lambda_\psi^4 h_\psi)=\frac{\nu_1^2+6\nu_1\nu_2+\nu_2^2}8$.

We summarize this argument with the following theorem. Using the notation developed through this section, we have:

\begin{theorem}\label{thm:EigenvalueTheorem}
Let $Z=\sum \lambda_{\psi}x_{\psi}$.  The space $\m^+_{\{\nu_1,\nu_2\}}$ is the direct sum of the restricted root spaces $\m^+_{\alpha}$, where $\alpha$ satisfies the two conditions $\alpha\left(\sum_\psi \lambda_\psi ^2 h_\psi\right)=\frac{\nu_1+\nu_2}2$ and $\alpha\left(\sum_\psi \lambda_\psi ^4 h_\psi\right)=\frac{\nu_1^2+6\nu_1\nu_2+\nu_2^2}8$.  \qed
\end{theorem}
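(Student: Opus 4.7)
The plan is to translate the simultaneous eigenvalue equations on $A\in\m_0$ into equations on $A^+\in\m^+$, where the restricted root space decomposition can be brought to bear. Writing $A=(I+\sigma)A^+$, $Z=(I+\sigma)Z^+$ with $Z^+=\sum_\psi \lambda_\psi e_\psi$, and $JZ=(I+\sigma)(iZ^+)$, then using $[\m^\pm,\m^\pm]=\{0\}$ to kill cross-terms, the pair $\ad_Z^2 A=\nu_1 A$, $\ad_{JZ}^2 A=\nu_2 A$ collapses, after collecting $\m^+$-parts, into
\[
\ad_{Z^+}\ad_{\sigma Z^+}A^+=\frac{\nu_1+\nu_2}{2}A^+,\qquad \ad_{Z^+}^2\sigma A^+=\frac{\nu_1-\nu_2}{2}A^+.
\]

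For the first equation, I would use $[\m^+,\m^+]=\{0\}$ to replace $\ad_{Z^+}\ad_{\sigma Z^+}$ by $\ad_{[Z^+,\sigma Z^+]}$, noting that $[Z^+,\sigma Z^+]=\sum_\psi \lambda_\psi^2 h_\psi\in\t$. Decomposing $A^+=\sum_\alpha A_\alpha$ along restricted roots then diagonalizes this equation, giving the condition $\alpha\bigl(\sum_\psi \lambda_\psi^2 h_\psi\bigr)=\frac{\nu_1+\nu_2}{2}$ on each nonzero component.

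The second equation is harder because of the conjugate-linear $\sigma$. The key move is to square it, producing $\ad_{Z^+}^2\ad_{\sigma Z^+}^2 A^+=\bigl(\frac{\nu_1-\nu_2}{2}\bigr)^2 A^+$. Expanding $\ad_{Z^+}^2\ad_{\sigma Z^+}^2$ via the Jacobi identity and using the first equation to eliminate the $(\ad_{Z^+}\ad_{\sigma Z^+})^2$ term reduces this to $\ad_{Z^+}\ad_{[Z^+,\sigma Z^+]}\ad_{\sigma Z^+}A^+=-\nu_1\nu_2 A^+$. I would then compute the left side root-space by root-space: apply $\ad_{\sigma Z^+}$ to $A_\alpha$ to land in $\m^-_{\alpha-\psi}$, evaluate $\ad_{[Z^+,\sigma Z^+]}$ there by scalar multiplication, and use the strong orthogonality of $\Psi$ (so that $[e_\gamma,f_\psi]=\delta_{\gamma\psi}h_\psi$ for $\gamma,\psi\in\Psi$) in the outer $\ad_{Z^+}$. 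Combining with the already-established first condition to evaluate the $(\alpha-\psi)(\sum_\gamma \lambda_\gamma^2 h_\gamma)$ scalars yields the second root condition $\alpha\bigl(\sum_\psi \lambda_\psi^4 h_\psi\bigr)=\frac{\nu_1^2+6\nu_1\nu_2+\nu_2^2}{8}$.

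The main subtlety I expect is justifying that the \emph{squared} second equation characterizes $(\m_0)_{\{\nu_1,\nu_2\}}$ exactly, not a strictly larger space. This follows from the symmetry of $(\m_0)_{\{\nu_1,\nu_2\}}$ in the pair $\{\nu_1,\nu_2\}$: by Lemma \ref{lemma:eigenvaluepairs}, it contains both $A$ with $(\ad_Z^2,\ad_{JZ}^2)$-eigenvalues $(\nu_1,\nu_2)$ and $JA$ with eigenvalues $(\nu_2,\nu_1)$, so the sign ambiguity introduced by squaring matches exactly the defining symmetry of the subspace.
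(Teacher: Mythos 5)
Your proposal matches the paper's proof essentially step for step: the same reduction to the pair of equations on $A^+$, the same squaring of the conjugate-linear second equation followed by the Jacobi-identity expansion and the use of the first equation to isolate $\ad_{Z^+}\ad_{[Z^+,\sigma Z^+]}\ad_{\sigma Z^+}$, and the same root-space computation via strong orthogonality; your closing remark on why squaring loses nothing is precisely the justification the paper leaves implicit, since $\m^+_{\{\nu_1,\nu_2\}}$ is by definition symmetric in $\{\nu_1,\nu_2\}$. One cosmetic slip: $[f_\psi,A_\alpha]$ lands in the $\k$-root space $\g_{\alpha-\psi}$ rather than in $\m^-$, but this does not affect the scalar-action computation.
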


Of course, for fixed $Z$, $(\m_0)_{\{\nu_1,\nu_2\}}=\{0\}$ for almost all choices of $\{\nu_1,\nu_2\}$, with the equations in Theorem \ref{thm:EigenvalueTheorem} predicting the exact choices of $\{\nu_1,\nu_2\}$ for which $(\m_0)_{\{\nu_1,\nu_2\}}$ is nonzero. By simple algebra, we have:

\begin{cor}\label{cor:EigenvalueCorollary}
Fix $Z$  as above, let $\alpha$ be a positive restricted $\m$-root, and write $x=\alpha\left(\sum_\psi \lambda_\psi ^2 h_\psi\right)$ and $y=\alpha\left(\sum_\psi \lambda_\psi ^4 h_\psi\right)$.    Then $\m^+_\alpha$ is contained in 
$\m^+_{\{\nu_1,\nu_2\}}$, where $\{\nu_1,\nu_2\}=\{x\pm\sqrt{2x^2-2y}\}$.   \qed
\end{cor}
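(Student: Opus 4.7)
The plan is to deduce the corollary from Theorem \ref{thm:EigenvalueTheorem} by pure algebra: Theorem \ref{thm:EigenvalueTheorem} identifies $\m^+_\alpha \subseteq \m^+_{\{\nu_1,\nu_2\}}$ exactly when $x$ and $y$ satisfy the two equations
\begin{equation*}
x = \frac{\nu_1+\nu_2}{2}, \qquad y = \frac{\nu_1^2+6\nu_1\nu_2+\nu_2^2}{8}.
\end{equation*}
So I only need to invert this system, solving for the unordered pair $\{\nu_1,\nu_2\}$ in terms of $x$ and $y$.

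First I would rewrite both equations in terms of the elementary symmetric functions $s := \nu_1+\nu_2$ and $p := \nu_1\nu_2$. The first equation is $s = 2x$. For the second, I use $\nu_1^2+\nu_2^2 = s^2 - 2p$, so that $\nu_1^2+6\nu_1\nu_2+\nu_2^2 = s^2 + 4p$. Thus the second equation reads $8y = s^2 + 4p = 4x^2 + 4p$, which gives $p = 2y - x^2$.

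Therefore $\nu_1$ and $\nu_2$ are precisely the roots of the monic quadratic
\begin{equation*}
t^2 - s\,t + p = t^2 - 2x\,t + (2y - x^2) = 0,
\end{equation*}
and the quadratic formula yields
\begin{equation*}
\{\nu_1,\nu_2\} = \bigl\{x \pm \sqrt{x^2 - (2y - x^2)}\bigr\} = \bigl\{x \pm \sqrt{2x^2 - 2y}\bigr\},
\end{equation*}
which is exactly the claimed formula. There is no real obstacle here; the corollary is a direct algebraic consequence of Theorem \ref{thm:EigenvalueTheorem}, and the only thing to note is that the two original conditions determine the unordered pair $\{\nu_1,\nu_2\}$ uniquely via its elementary symmetric functions, which is why writing the system in terms of $s$ and $p$ is the efficient route.
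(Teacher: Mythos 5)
Your proposal is correct and is precisely the ``simple algebra'' the paper invokes: Theorem \ref{thm:EigenvalueTheorem} gives $\nu_1+\nu_2=2x$ and $\nu_1^2+6\nu_1\nu_2+\nu_2^2=8y$, and passing to the elementary symmetric functions and solving the resulting quadratic yields $\{\nu_1,\nu_2\}=\{x\pm\sqrt{2x^2-2y}\}$. The computation checks out, and there is nothing to add.
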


Hence, if we desire to know the simultaneous eigenvalue pairs of $\ad_Z^2$ and $\ad_{JZ}^2$ on $\m_0$, we run through each positive restricted $\m$-root $\alpha$, compute the values of $x$ and $y$, and obtain the eigenvalue pairs $\{x\pm\sqrt{2x^2-2y}\}$.   We recall that from classification, the possibilities for $\alpha$ are $\psi_s$, $\frac 12(\psi_s+\psi_t)$ with $s<t$, and (for some but not all Hermitian symmetric spaces) $\frac 12\psi_s$.  Relative to $Z=\sum\lambda_\psi x_\psi$, we find $\{\nu_1,\nu_2\}=\{4\lambda_s^2,0\}$, $\{(\lambda_s\pm\lambda_t)^2\}$, and $\{\lambda_s^2\}$ (with $\nu_1=\nu_2$) respectively, for the three cases for $\alpha$.  


\section{Some geometric identifications}\label{s:geometry}
The purpose of this section is to present several realizations of the cotangent bundle to $G_u/K_0$, as well as identifications among them.

 \subsection{Cotangent bundles and twisted products}
 \label{ss:cotangenttwisted} 
 
 \subsubsection{Two twisted products}\label{sss:twotwisted}
 
 {\em We identify the real cotangent bundle to $G_u/K_0$ with $\twist{G_u}{K_0}{\m_u}$.}  As a cotangent vector, $[g_u,W]$ sends the tangent vector $\left.\frac{d}{dt}\right|_0g_u\exp(tA)K_0$ to $\kappa(A,W)$. (Here it is enough to consider $A\in\m_u$.)\footnote{Recall that if $A\subset B$ are groups and $A$ acts on a set $X$, then $\twist BAX$ is the space of $A$-orbits on $B\times X$, where $\phantom{.}^a(b,x)=(ba^{-1}, \phantom{.}^ax)$.  The image of $(b,x)$ in $\twist BAX$ is denoted $[b,x]$.  The twisted product $\twist BAX$ is a bundle over $B/A$ with fiber $X$.  }

{\em We identify the complex cotangent bundle  to $G/Q$ with $\twist{G}{Q}{\m^-}$.}   As a complex cotangent vector, $[g,W^-]$ sends the tangent vector $\left.\frac{d}{dt}\right|_0g\exp(tA^+)Q$ to $2\kappa(A^+,W^-)$.  Here $\twist{G}{Q}{\m^-}$ carries an obvious complex structure, 
which we denote by $J_3$.  Moreover, the obvious action of $G$ on $\twist{G}{Q}{\m^-}$ is holomorphic.  

\subsubsection{Identifying the two twisted products}\label{sss:identifytwotwisted}

As we now describe, {\em there is a simple $G_u$-equivariant bijection (denoted $\Theta$) between $\twist{G}{Q}{\m^-}$ and $\twist{G_u}{K_0}{\m_u}$.}    Note that by the  Iwasawa decomposition of $G$ (or the transitivity of $G_u$ on $G/Q$),  any $g\in G$ can be written as 
$g=g_uq$ for some   $g_u\in G_u$ and $q\in Q$.  We then have:

\begin{eqnarray*}
\Theta: \twist {G}{Q}{\m^-} & \rightarrow& \twist{G_u}{K_0}{\m_u}\\
   \left[g,W^-\right]=\left[g_uq,W^-\right]
   =\left[g_u,Ad_q(W^-)\right]&\mapsto &\left[g_u,(I+\theta)(Ad_q(W^-))\right]\\       
   \left[g_u,W^-\right]& \leftarrow &\left[g_u,(I+\theta)W^-\right]
   \end{eqnarray*}
(This tells us how $G$ acts on 
$ \twist{G_u}{K_0}{\m_u}$  $J_3$-holomorphically.)

We make an observation regarding the   compatibility of these identifications. 
Let $g_u\in G_u$, let $W=(I+\theta)W^-\in\m$ and let $A=(I+\theta)A^+\in\m$.  In $G_u/K_0$, the tangent vector given by the equivalence class of the curve  $g_u\exp(tA)K_0$ is sent by the cotangent vector
$[g_u,W]$ to $\kappa(A,W)=\kappa(A^+,W^-)+\kappa(\theta A^+,\theta W^-)=2\Re\left(\kappa(A^+,W^-)\right)$.  However, the curve $g_u\exp(tA)K_0\sim g_u\exp(tA^+)\exp(t\theta A^+)K_0$ is identified  in $G/Q$ with the curve $g_u\exp(tA^+)Q$, which  is sent by the cotangent vector $[g_u,W^-]$ to $2\kappa(A^+,W^-)$.

\subsection{Cotangent bundles and adjoint orbits}\label{ss:cotangentadjoint}

\subsubsection{$\O$ is the cotangent bundle to $\Ou$}\label{sss:OandOu}

Crucially, {\em the cotangent bundle to the Hermitian symmetric space  $\Ou$ can be identified with  $\O\subset\g$, the $\Ad(G)$-orbit of $\Upsilon$.}  Verifying that $\O$ gives the cotangent bundle to $\Ou$ relies on an orbit decomposition, as follows. By a decomposition result of Kobayashi \cite{Ko}, $G=G_u\times exp(\m_0)\times exp(i\k_0)$; from which it follows that $\O\simeq G/K
\simeq\twist{G_u}{K_0}{\m_0}$, which  is the cotangent bundle to $G_u/K_0\simeq\Ou$.  

We will constantly make use of the following fact (see \cite{Ko}): {\em every element of $\O$ is expressible as $\Ad_{g_u} \Ad_{\exp Z}\Upsilon$, where $g_u\in G_u$  and $Z\in\m_0$.} The projection of the cotangent bundle on the zero fiber is then given by $\O\rightarrow\Ou$, $\Ad_{g_u} \Ad_{\exp Z}\Upsilon\mapsto \Ad_{g_u}\Upsilon$.  

Note that $\O$ carries an obvious complex structure (which we will denote $J_1$) and relative to this complex structure, the obvious $G$-action is holomorphic (even algebraic).

\subsubsection{Two identifications of $\O$ and 
$\twist{G_u}{K_0}{\m_u}$}\label{sss:twoidentifications}

As indicated in Section \ref{sss:OandOu}, we have a bijection
\begin{eqnarray*}
\Phi_{\text{triv}}:\O&\rightarrow&\twist{G_u}{K_0}{\m_u}\\
\Ad_{g_u}\Ad_{\exp Z}\Upsilon&\mapsto&[g_u,Z]
\end{eqnarray*}

Unfortunately, $\Phi_{\text{triv}}$ is not the best isomorphism to use from the point of view of pseudok\"ahler structures, as indicated by the work of other authors.    
We also  will need a map $\Phi$, defined  by 
\begin{eqnarray*}
\Phi:\O&\rightarrow&\twist{G_u}{K_0}{\m_u}\\
\Ad_{g_u}\Ad_{\exp Z}\Upsilon&\mapsto&[g_u,J\Im\left(\Ad_{\exp Z}\Upsilon\right)].
\end{eqnarray*} 
We spot the appearance of the map $P$:  

\begin{lemma}\label{lemma:describePhi}

\begin{enumerate}
\item We have the equalities
\begin{eqnarray*}\Phi\left(\Ad_{g_u}\Ad_{\exp Z}\Upsilon\right)
&=&[g_u,J\Im\left(\Ad_{\exp Z}\Upsilon\right)]
=[g_u,-\frac{i}2J\left(\Ad_{\exp(Z)}-\Ad_{\exp(-Z)}\right)\Upsilon]\\
&=&[g_u,-i\,\ad_\Upsilon\Ad_{\exp Z}\Upsilon]
=[g_u,-iJ\text{proj}^\g_\m\left(\Ad_{\exp Z}\Upsilon\right)]\\
&=&[g_u,-iJ\text{proj}^{\g_0}_{\m_0}\left(\Ad_{\exp Z}\Upsilon\right)]
=[g_u,-i\, P(Z)]
=[g_u,-i\, E_{JZ}Z].
\end{eqnarray*}
\item If $Z=\sum c_\psi x_\psi\in\a_0$, then  $\Phi\left(\Ad_{g_u}\Ad_{\exp Z}\Upsilon\right)=[g_u,\frac 1{2i}\sum\sinh(2c_\psi)x_\psi]$. 
\item $\Phi$ is a bijection.
\item $\Phi^{-1}$ takes $[g_u,W]$ to $\Ad_{g_u}\Ad_{\exp\left(P^{-1}(-iW)\right)}\Upsilon$.
\end{enumerate}
\end{lemma}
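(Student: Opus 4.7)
The plan is to prove the four parts in order; the main content is in Part (1), which yields the compact formula $\Phi(\Ad_{g_u}\Ad_{\exp Z}\Upsilon) = [g_u, -iP(Z)]$, and Parts (2)--(4) follow by substitution and inversion.

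For Part (1), I would begin with the identity $\theta(\Ad_{\exp Z}\Upsilon) = \Ad_{\exp(-Z)}\Upsilon$ (using $\theta Z = -Z$ for $Z \in \m_0$ and $\theta \Upsilon = \Upsilon$), which via $\Im A = \frac{1}{2i}(A - \theta A)$ for the decomposition $\g = \g_u \oplus i\g_u$ immediately yields the second expression. For the third and fourth expressions, I would use that $\ad_\Upsilon = J$ on $\m$ and vanishes on $\k$, so $J$ applied to the $\m$-part of any element equals $\ad_\Upsilon$ of the whole; combined with Lemma \ref{lemma:oddpart} (the odd part of $\Ad_{\exp Z}$ maps $\k$ into $\m$, so $(\Ad_{\exp Z}-\Ad_{\exp(-Z)})\Upsilon = 2\proj{\g}{\m}(\Ad_{\exp Z}\Upsilon)$), this rewrites $-\frac{i}{2}J(\Ad_{\exp Z}-\Ad_{\exp(-Z)})\Upsilon = -iJ\proj{\g}{\m}(\Ad_{\exp Z}\Upsilon) = -i\ad_\Upsilon\Ad_{\exp Z}\Upsilon$. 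The fifth expression is immediate since $Z \in \m_0$ forces $\Ad_{\exp Z}\Upsilon \in \g_0$, so $\proj{\g}{\m}$ and $\proj{\g_0}{\m_0}$ agree on it. The last two equalities are the definition $P(Z) = \ad_\Upsilon \Ad_{\exp Z}\Upsilon$ together with the identity $P(Z) = E_{JZ}Z$ established in \S\ref{ss:P} (via Lemma \ref{lemma:order2}(2)).

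For Part (2), I would substitute $Z = \sum c_\psi x_\psi$ into the final expression from Part (1). By Lemma \ref{lemma:decomposition}, $\proj{\g_0}{\m_0}(\Ad_{\exp Z}\Upsilon) = -\frac{1}{2}\sum \sinh(2c_\psi) y_\psi$; applying $-iJ$ and invoking $J y_\psi = -x_\psi$ from the commutation relations of \S\ref{s:notation} produces $\frac{1}{2i}\sum \sinh(2c_\psi) x_\psi$, which lies in $\m_u$ as required.

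For Parts (3) and (4), the formula $\Phi(\Ad_{g_u}\Ad_{\exp Z}\Upsilon) = [g_u, -iP(Z)]$ realizes $\Phi$, fiberwise over $G_u/K_0$, as the composition of $P \colon \m_0 \to \m_0$ with the $\R$-linear bijection $\m_0 \to \m_u$ given by multiplication by $-i$. Since Proposition \ref{prop:Pinvertible} provides $P^{-1}$, inverting fiberwise yields the explicit formula for $\Phi^{-1}$, and bijectivity of $\Phi$ follows. The step I expect to require the most care is verifying that $\Phi$ and $\Phi^{-1}$ descend consistently through the $K_0$-quotients in the twisted products: I would use $K_0$-equivariance of $P$ (noted in \S\ref{ss:P}, ultimately because $\Ad_k \Upsilon = \Upsilon$) together with $\Ad_{k_0} \Ad_{\exp Z} \Upsilon = \Ad_{\exp(\Ad_{k_0} Z)} \Upsilon$ for $k_0 \in K_0$ (again using $\Ad_{k_0} \Upsilon = \Upsilon$). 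This bookkeeping is routine but is the natural place for a slip.
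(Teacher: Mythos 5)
Your proposal is correct and follows the paper's own route: the paper's proof of this lemma is essentially a pointer to the chain of equalities at the start of Section \ref{ss:P} (for parts (1)--(2)) together with Proposition \ref{prop:Pinvertible} (for parts (3)--(4)), and your argument simply fills in those same steps — the $\theta$-characterization of $\Im$, the identification of $\ad_\Upsilon\Ad_{\exp Z}\Upsilon$ with $J$ applied to the $\m$-part, the $\a_0$-computation via Lemma \ref{lemma:decomposition}, and fiberwise inversion of $-iP$. The extra care you take with $K_0$-equivariance in the twisted-product quotient is correct and harmless, though the paper leaves it implicit.
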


\begin{proof}
For (1) and (2), see the equalities in the beginning of Section \ref{ss:P}.    Part (3) follows from the fact that $P:\m_0\rightarrow\m_0$ is a bijection (Proposition \ref{prop:Pinvertible}), and (4) follows from (1). 
\end{proof}

\section{Quaternionic structure}\label{s:quaternionic}
In this section, we give the complex structures $J_1$, $J_2$, $J_3$ on $\O\simeq\twist GQ{\m^-}$. We begin with notation for some easily-described tangent vectors.  (At various points in the development of the hyperk\"ahler structure, some of these are easier to work with than others.)  The complex structures themselves are defined in Definition \ref{defn:complexstructures}.   We describe the almost complex structures in terms of our various tangent vectors.

\subsection{Identifications of tangent spaces}\label{ss:tangents}
Fix $g_u\in G_u$ and $Z\in \m_0$.  Let $(I+\theta)W^-=W=-iP(Z)\in\m_u$.  Let $(A,B)\in\m_u\times\m_u$.  Write $A=A^++A^-$ with $A^+=\theta A^-$ and similarly for $B$.  We use these data to produce a variety of tangent vectors to $T^*(G_u/K_0)$, in its various manifestations. 

 Let ${(A,B)}\tilde{}=\left.\frac d{dt}\right|_0 \Ad_{g_u}\Ad_{\exp Z}\Ad_{\exp t(A+iB)}\Upsilon$, a tangent vector to $\O$ at $\Ad_{g_u}\Ad_{\exp Z}\Upsilon$. 

Let $(A,B)\hat{}=\left.\frac d{dt}\right|_0 \Ad_{g_u}\Ad_{\exp tA}\Ad_{\exp (Z+tiB)}\Upsilon$, a tangent vector to $\O$ at $\Ad_{g_u}\Ad_{\exp Z}\Upsilon$.

  Let $(A,B)^\sharp{}=\left.\frac d{dt}\right|_0[g_u\exp(tA),W+tB]$, a tangent vector to $\twist{G_u}{K_0}{\m_u}$ at $[g_u,W]$.

 Let $(A,B)^\flat{}=\left.\frac d{dt}\right|_0[g_u\exp(tA^+), W^-+tB^-]$, a tangent vector to $\twist GQ{\m^-}$ at $[g_u,W^-]$.  

\begin{lemma}\label{lemma:relatevectorfields}
These tangent vectors are related by the rules
\begin{enumerate}
\item
$(A,B)\hat{}=(S_ZA,E_ZB)\tilde{}$  
\item $d\Phi(A,B)\hat{}=(A,S_{JZ}E_ZB)^\sharp$
\item $d\Theta(A,B)^\flat{}=(A,B)^\sharp{}$.
\end{enumerate}
\end{lemma}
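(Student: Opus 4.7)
All three identities are first-order calculations comparing curves in the various realizations of the cotangent bundle; the task is to verify that the tangent vectors agree once one tracks the relevant identification carefully.

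For (1), I would apply the Leibniz rule to $(A,B)\hat{}=\left.\tfrac d{dt}\right|_0\Ad_{g_u}\Ad_{\exp tA}\Ad_{\exp(Z+tiB)}\Upsilon$. The identity $d\exp_Z=dl_{\exp Z}\circ E_{Z,l}$ from \S\ref{ss:EZFZ} makes the $B$-contribution equal to $\Ad_{g_u}\Ad_{\exp Z}\ad_{E_{Z,l}(iB)}\Upsilon$. Decomposing $E_{Z,l}=E_Z-F_Z$ into its $\m$-valued and $\k$-valued parts and using that $\Upsilon$ is central in $\k$, the $F_Z(iB)\in\k$ piece drops out, reducing this to $\Ad_{g_u}\Ad_{\exp Z}\ad_{iE_ZB}\Upsilon$. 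The $A$-contribution $\Ad_{g_u}[A,\Ad_{\exp Z}\Upsilon]$ I would rewrite as $\Ad_{g_u}\Ad_{\exp Z}\ad_{\Ad_{\exp(-Z)}A}\Upsilon$; splitting $\Ad_{\exp(-Z)}A$ into its $\m$-part $S_ZA$ and its $\k$-part $-\ad_ZE_ZA$ (Lemma \ref{lemma:oddpart}), the same centrality argument eliminates the $\k$-part and leaves $\Ad_{g_u}\Ad_{\exp Z}\ad_{S_ZA}\Upsilon$. Adding the two pieces gives $\Ad_{g_u}\Ad_{\exp Z}\ad_{S_ZA+iE_ZB}\Upsilon$, which matches $(S_ZA,E_ZB)\tilde{}$ by definition.

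For (2), I would apply $\Phi$ directly to the curve $\Ad_{g_u\exp tA}\Ad_{\exp(Z+tiB)}\Upsilon$ via the formula $\Phi(\Ad_{g_u}\Ad_{\exp Z}\Upsilon)=[g_u,-iP(Z)]$ from Lemma \ref{lemma:describePhi}(1), obtaining the curve $[g_u\exp tA,\,-iP(Z+tiB)]$ in $\twist{G_u}{K_0}{\m_u}$. Its tangent at $t=0$ is $(A,-i\,dP_Z(iB))^\sharp$. Using Lemma \ref{lemma:differentialP}, $dP_Z=S_{JZ}\circ E_Z$; since this is a power series in the $\C$-linear operators $\ad_Z^2$ and $\ad_{JZ}^2$, it is itself $\C$-linear, so $dP_Z(iB)=i\,S_{JZ}E_ZB$ and therefore $-i\,dP_Z(iB)=S_{JZ}E_ZB$, as required.

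For (3), the key is to decompose $g_u\exp(tA^+)=g'_u(t)q'(t)$ with $g'_u(t)\in G_u,\ q'(t)\in Q$. Since $\g_u\cap\q=\k_0$ and $\g_u+\q=\g$, writing $A^+=A+(-A^-)$ with $A\in\m_u\subset\g_u$ and $-A^-\in\m^-\subset\q$ exhibits a valid choice; differentiating the decomposition relation shows $\dot g'_u(0)\equiv g_uA\pmod{g_u\k_0}$ and $\dot q'(0)\equiv -A^-$, and the $\k_0$-indeterminacy is washed out by the defining $K_0$-action on $\twist{G_u}{K_0}{\m_u}$. Then $\Theta([g_u\exp tA^+,\,W^-+tB^-])=[g'_u(t),\,(I+\theta)\Ad_{q'(t)}(W^-+tB^-)]$; because $[\m^-,\m^-]=0$ the $\Ad_{q'(t)}$-action is trivial on $W^-+tB^-$ to first order, so the second coordinate's derivative at $t=0$ is $(I+\theta)B^-=B$. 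The main obstacle will be the bookkeeping in (3): one must verify that the $\k_0$-ambiguity in the $G_u\cdot Q$ decomposition produces only a $K_0$-equivalent tangent vector, which amounts to checking $\theta$-equivariance of the infinitesimal $K_0$-action and compatibility with the identity $W=(I+\theta)W^-$.
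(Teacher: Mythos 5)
Your proposal is correct and follows essentially the same route as the paper: for (1) you reduce both contributions to brackets with $\Upsilon$ and discard the $\k$-components by centrality of $\Upsilon$ (exactly the paper's use of $[E_{Z,l}(iB),\Upsilon]=[E_Z(iB),\Upsilon]$ and $[\Ad_{\exp(-Z)}A,\Upsilon]=[S_ZA,\Upsilon]$); for (2) you apply $\Phi$ to the curve and invoke $dP_Z=S_{JZ}\circ E_Z$; and for (3) you use the first-order factorization through $G_u\times Q$ together with $[\m^-,\m^-]=0$, which is the paper's computation of $\Theta^{-1}$ read in the opposite direction. No gaps.
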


\begin{proof}
For (1), we compute that 
\begin{eqnarray*} (A,B)\hat{}\leftrightarrow \Ad_{g_u}\Ad_{\exp (tA)}\Ad_{\exp (Z+tiB)}\Upsilon&\sim&\Ad_{g_u}\Ad_{\exp (tA)}\Ad_{\exp (Z)}\Ad_{\exp (t E_{Z,l}(iB))}\Upsilon\\
&=&\Ad_{g_u}\Ad_{\exp (Z)}\Ad_{\exp (t\Ad_{\exp (-Z)}A)}\Ad_{\exp (tE_{Z,l}(iB))}\Upsilon
\end{eqnarray*}
  To prove that this gives the tangent vector $(S_ZA,E_ZB)\tilde{}$, we need to show that  $$\Ad_{\exp(t(E_{Z,l}(iB)+\Ad_{\exp(-Z)}A))}\Upsilon\sim \Ad_{\exp{t(S_ZA+iE_ZB)}}\Upsilon$$ but this is immediate from $[E_{Z,l}B,\Upsilon]=[E_ZB,\Upsilon]$ (Section \ref{ss:EZFZ}) and  $[\Ad_{\exp(-Z)}A,\Upsilon]=[S_ZA,\Upsilon]$ (Lemma 
\ref{lemma:SZ}(1)).

For (2), applying $\Phi$ to $\Ad_{g_u}\Ad_{\exp(tA)}\Ad_{\exp(Z+tiB)}\Upsilon$, we obtain $[g_u\exp(tA),-iP(Z+tiB)]$; then use Lemma \ref{lemma:differentialP}.

For (3), we compute that 
\begin{eqnarray*}\Theta^{-1}[g_u\exp(tA),W+tB]&=&[g_u\exp(tA),W^-+tB^-]
\sim[g_u\exp(tA^+)\exp(tA^-),W^-+tB^-]\\
&=&[g_u\exp(tA^+),\Ad_{\exp(tA^-)}(W^-+tB^-)]\\&=&[g_u\exp(tA^+),W^-+tB^-],
\end{eqnarray*}
 where at the last step, we use $[\m^-,\m^-]=0$.
\end{proof}

\subsection{Complex structures}\label{ss:complexstructures}

\begin{defn}\label{defn:Gamma}
We define $\Gamma:\O\rightarrow\O$ to be the bijection 
$\Ad_{g_u}\Ad_Z\Upsilon\mapsto \Ad_{g_u}\Ad_{JZ}\Upsilon$.
\end{defn}

\begin{defn}\label{defn:complexstructures}
We define the following complex structures on $\O$:
\begin{enumerate}
\item $J_1$ is the usual complex structure on $\O\simeq G/K$ (coming from the complex structure on $G$). 
\item $J_2$ is the complex structure on $\O$ defined so that the bijection 
$\Gamma:(\O,J_2)\rightarrow (\O,J_1)$  is holomorphic.  (See Definition \ref{defn:Gamma}.)
\item $J_3$ is the complex structure on $\O$ defined so that the bijection
$\Theta^{-1}\circ\Phi:(\O, J_3)\rightarrow\twist GQ{\m^-}$ is holomorphic, where $\twist GQ{\m^-}$
has the usual complex structure coming from $G$, $Q$, and $\m^-$.  (Here the almost complex structure on $\m^-$ is $i$, not $-i$.)
\end{enumerate}
By abuse of notation, we define complex structures $J_1$, $J_2$, and $J_3$ on $\twist GQ{\m^-}$ so that 
$\Theta^{-1}\circ\Phi$ is holomorphic for the corresponding complex structure on $\O$, and on $\twist{G_u}{K_0}{\m_u}$ so that $\Phi$ is holomorphic.  
\end{defn}

\begin{lemma}\label{lemma:J3}

\phantom{.}

\begin{enumerate}
\item $J_1(A,B)\tilde{}=(-B,A)\tilde{}$
\item $J_3(A,B)^\flat{}=(JA,-JB)^\flat{}$
\item $J_3(A,B)^\sharp=(JA,-JB)^\sharp$
\end{enumerate}
\end{lemma}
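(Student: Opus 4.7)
The plan is to verify the three parts separately, in each case tracing through the relevant definition of the complex structure and the description of the tangent vector from Section~\ref{ss:tangents}.

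For part (1), I will use directly that $J_1$ on $\O\subset\g$ is the restriction of multiplication by $i$ in $\g$. The curve $\Ad_{g_u}\Ad_{\exp Z}\Ad_{\exp t(A+iB)}\Upsilon$ defining $(A,B)\tilde{}$ has tangent vector $\left[\Ad_{g_u}\Ad_{\exp Z}(A+iB),\,\Ad_{g_u}\Ad_{\exp Z}\Upsilon\right]\in\g$ (using that $\Ad$ is a Lie algebra homomorphism and $\exp(sX)\cdot\Upsilon$ has derivative $[X,\Upsilon]$ at $s=0$). Multiplying by $i$ replaces $A+iB$ with $iA-B=(-B)+i(A)$, and since $-B,A\in\m_u$, this is by definition the infinitesimal direction for $(-B,A)\tilde{}$.

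For part (2), I will use that by Definition~\ref{defn:complexstructures} (and the final paragraph there, together with Section~\ref{sss:twotwisted}), $J_3$ on $\twist{G}{Q}{\m^-}$ is the obvious complex structure coming from the complex-manifold structures on $G/Q$ (whose holomorphic tangent model at the basepoint is $\m^+$) and on the fiber $\m^-$. Thus $J_3$ applied to $(A,B)^\flat=\left.\frac{d}{dt}\right|_0[g_u\exp(tA^+),W^-+tB^-]$ multiplies both $A^+\in\m^+$ and $B^-\in\m^-$ by $i$. Since $J=\ad_\Upsilon$ has eigenvalues $\pm i$ on $\m^\pm$, we have $iA^+=(JA)^+$ and $iB^-=-(JB)^-=(-JB)^-$, so the result is exactly $(JA,-JB)^\flat$ (noting $JA,-JB\in\m_u$ because $\Upsilon\in\k_0$ preserves $\m_u$ under $\ad$).

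For part (3), I will invoke part (2) together with Lemma~\ref{lemma:relatevectorfields}(3), which gives $d\Theta(A,B)^\flat=(A,B)^\sharp$. The map $\Theta$ is $J_3$-holomorphic: by Definition~\ref{defn:complexstructures}, both $\Phi\colon\O\to\twist{G_u}{K_0}{\m_u}$ and $\Theta^{-1}\circ\Phi\colon\O\to\twist{G}{Q}{\m^-}$ are holomorphic for the respective $J_3$'s, so $\Theta^{-1}=(\Theta^{-1}\circ\Phi)\circ\Phi^{-1}$ is holomorphic, hence $\Theta$ is. Therefore
\[
J_3(A,B)^\sharp = J_3\,d\Theta(A,B)^\flat = d\Theta\,J_3(A,B)^\flat = d\Theta(JA,-JB)^\flat = (JA,-JB)^\sharp.
\]

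The only step requiring genuine thought is the identification in part (2) of the natural $J_3$ on $\twist{G}{Q}{\m^-}$ with the operator sending the $(A,B)^\flat$-parameters to $(iA^+,iB^-)$; everything else is bookkeeping with the definitions and the eigenvalue decomposition $\m=\m^+\oplus\m^-$ for $J$.
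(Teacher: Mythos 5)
Your proposal is correct and follows essentially the same route as the paper: the paper simply declares parts (1) and (2) ``immediate'' (your unwindings of $J_1$ as multiplication by $i$ on the orbit and of $J_3$ as $(A^+,B^-)\mapsto(iA^+,iB^-)$, combined with the $\pm i$ eigenvalues of $J$ on $\m^\pm$, are exactly the intended computations) and derives part (3) from Lemma \ref{lemma:relatevectorfields}(3) just as you do.
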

\begin{proof}
The first two statements are immediate, and the third follows from Lemma \ref{lemma:relatevectorfields}(3).
\end{proof}

We then  compute the almost complex structures for the `hat' tangent vectors:

\begin{lemma}\label{lemma:hatalmostcomplexstructures}
\phantom{.}
\begin{enumerate}
\item $J_1(A,B)\hat{}=(-S_Z^{-1}E_ZB,E_Z^{-1}S_ZA)\hat{}$
\item $J_2(A,B)\hat{}=(-JS_Z^{-1}E_ZB,-E_Z^{-1}S_ZJA)\hat{}$
\item $J_3(A,B)\hat{}=(JA,-E_Z^{-1}S_{Z}JS_{Z}^{-1}E_ZB)\hat{}$
\end{enumerate}
\end{lemma}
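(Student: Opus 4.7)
The plan is to translate each hat tangent vector into a form in which the complex structure has already been computed (Lemma \ref{lemma:J3}), apply the known formula, and then translate back using the invertible conversions in Lemma \ref{lemma:relatevectorfields}. Throughout, we work at the base point $\Ad_{g_u}\Ad_{\exp Z}\Upsilon$ (equivalently $[g_u,W]$ with $W=-iP(Z)$).

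For part (1), I would use Lemma \ref{lemma:relatevectorfields}(1): $(A,B)\hat{}=(S_ZA,E_ZB)\tilde{}$. Applying Lemma \ref{lemma:J3}(1) gives $J_1(A,B)\hat{}=(-E_ZB,S_ZA)\tilde{}$. To recast this as a hat vector $(C,D)\hat{}=(S_ZC,E_ZD)\tilde{}$, solve $S_ZC=-E_ZB$ and $E_ZD=S_ZA$, which requires invertibility of $E_Z$ and $S_Z$ on $\m_0$; this is afforded by Theorem \ref{theorem:eigenvaluetheorem} (the hyperbolic cosines and the functions $\sinh\sqrt\nu/\sqrt\nu$ are strictly positive). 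The formula then reads off immediately.

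For part (3), I would use Lemma \ref{lemma:relatevectorfields}(2): $d\Phi(A,B)\hat{}=(A,S_{JZ}E_ZB)^\sharp$. Apply Lemma \ref{lemma:J3}(3) to obtain $J_3(A,S_{JZ}E_ZB)^\sharp=(JA,-JS_{JZ}E_ZB)^\sharp$. Inverting $d\Phi$, the second component of the hat-form pre-image is $-E_Z^{-1}S_{JZ}^{-1}JS_{JZ}E_ZB$. The key simplification here is Lemma \ref{lemma:SZ}(2), which identifies $S_{JZ}^{-1}JS_{JZ}=S_ZJS_Z^{-1}$, yielding the asserted form $-E_Z^{-1}S_ZJS_Z^{-1}E_ZB$.

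For part (2), I would use the definitional relation $J_2=J_3J_1$ (from Section \ref{s:notation2}) and compose the formulas just obtained. Starting from $J_1(A,B)\hat{}=(-S_Z^{-1}E_ZB,E_Z^{-1}S_ZA)\hat{}$ and applying the $J_3$ formula from part (3), the first component becomes $J(-S_Z^{-1}E_ZB)=-JS_Z^{-1}E_ZB$, while the second becomes $-E_Z^{-1}S_ZJS_Z^{-1}E_Z\,(E_Z^{-1}S_ZA)=-E_Z^{-1}S_ZJA$ after the middle $S_Z^{-1}E_Z\cdot E_Z^{-1}S_Z$ collapses to the identity.

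The main obstacle is part (3): one must know that $J$ interacts cleanly with $S_{JZ}$ even though the $J$-conjugated map $S_{JZ}$ is different from $S_Z$. This is precisely the content of Lemma \ref{lemma:SZ}(2), itself a consequence of Corollary \ref{cor:order2again} on commuting even power series in $\ad_Z$ and $\ad_{JZ}$. Everything else is bookkeeping with invertible operators, and in particular no new computation with the $\m_0$-versus-$i\m_0$ decomposition is required at this stage.
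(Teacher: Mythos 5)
Parts (1) and (3) of your proposal coincide with the paper's proof: convert to tilde vectors via Lemma \ref{lemma:relatevectorfields}(1) for $J_1$, and to sharp vectors via Lemma \ref{lemma:relatevectorfields}(2) for $J_3$, apply Lemma \ref{lemma:J3}, convert back, and in the $J_3$ case invoke Lemma \ref{lemma:SZ}(2) to replace $S_{JZ}^{-1}JS_{JZ}$ by $S_ZJS_Z^{-1}$. Your appeal to the positivity of the eigenvalues from Theorem \ref{theorem:eigenvaluetheorem} to justify inverting $E_Z$ and $S_Z$ is a legitimate (and in the paper, tacit) point.

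Part (2), however, has a genuine logical gap in the context of this paper. You take $J_2=J_3J_1$ as definitional and compose the formulas from (1) and (3). But in Definition \ref{defn:complexstructures}(2), $J_2$ is \emph{defined} as the complex structure making $\Gamma:(\O,J_2)\rightarrow(\O,J_1)$ holomorphic, i.e.\ $J_2=d\Gamma^{-1}\circ J_1\circ d\Gamma$; the quaternionic relation $J_1J_2=J_3=-J_2J_1$ is Lemma \ref{lemma:quaternionic}, which the paper deduces \emph{from} the three formulas of the present lemma. Your computation therefore establishes a formula for the operator $J_3J_1$, not for the $J_2$ of Definition \ref{defn:complexstructures}(2), and using it to prove (2) and then (2) to prove Lemma \ref{lemma:quaternionic} would be circular. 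The paper's proof of (2) instead computes $d\Gamma^{-1}\circ J_1\circ d\Gamma$ directly on hat vectors: applying part (1) at the point $\Ad_{g_u}\Ad_{\exp JZ}\Upsilon$ gives $(A',B')=(-S_{JZ}^{-1}E_{JZ}JB,\,-JE_{JZ}^{-1}S_{JZ}A)$, and the identities $E_{JZ}=-JE_ZJ$, $S_{JZ}=-JS_ZJ$ from Lemma \ref{lemma:order2}(2) convert this to $(-JS_Z^{-1}E_ZB,\,-E_Z^{-1}S_ZJA)$. Your argument for (2) would be salvageable only if you first proved independently that $d\Gamma^{-1}\circ J_1\circ d\Gamma=J_3J_1$, which is essentially the computation you are trying to avoid.
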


\begin{proof}
(1) We have that 
$$J_1(A,B)\hat{}=J_1(S_ZA,E_ZB)\tilde{}=(-E_ZB,S_ZA)\tilde{}=(-S_Z^{-1}E_ZB,E_Z^{-1}S_ZA)\hat{}.$$  

(2) 
Using (1), it is easy to compute that 
$d\Gamma^{-1}\circ J_1\circ d\Gamma$ carries  
$$\left.\frac{d}{dt}\right|_0Ad_{g_u}\Ad_{\exp(tA)}
\Ad_{\exp(Z+tiB)}\Upsilon\qquad\text{ to }\qquad\left.\frac{d}{dt}\right|_0\Ad_{g_u}\Ad_{\exp(tA')}\Ad_{\exp (Z+tiB')}\Upsilon,$$ where
$$(A',B')=(-S_{JZ}^{-1}E_{JZ}JB,-JE_{JZ}^{-1}S_{JZ}A)=(-JS_Z^{-1}E_ZB,-E_Z^{-1}S_ZJA)$$

(3)  We use Lemmas \ref{lemma:J3} and \ref{lemma:relatevectorfields}, and compute
$$(A,B)\hat{}\quad \underset{d\Phi}{\mapsto}\quad (A,S_{JZ}E_ZB)^\sharp\quad\underset{J_3}{\mapsto}\quad(JA,-JS_{JZ}E_ZB)^\sharp\quad\underset{d\Phi^{-1}}{\mapsto}\quad(JA,-E_Z^{-1}S_{JZ}^{-1}JS_{JZ}E_ZB)\hat{}$$
which equals $(JA,-E_Z^{-1}S_{Z}JS_{Z}^{-1}E_ZB)\hat{}$ by Lemma \ref{lemma:SZ}(2).
\end{proof}

Lemma \ref{lemma:hatalmostcomplexstructures} immediately yields 
that $(J_1,J_2,J_3)$ comprise a quaternionic structure:

\begin{lemma}\label{lemma:quaternionic}
$J_1J_2=J_3=-J_2J_1$. \qed
\end{lemma}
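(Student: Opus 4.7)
The proof is a direct computation using the formulas in Lemma \ref{lemma:hatalmostcomplexstructures}, evaluated on an arbitrary tangent vector $(A,B)\hat{}$. The essential observation is that in the compositions $J_1 J_2$ and $J_2 J_1$, the ``non-commuting'' operators $E_Z$ and $S_Z$ appear precisely in the pattern $S_Z^{-1}E_Z \cdot E_Z^{-1}S_Z$ (or its mirror), which collapses to the identity. Once this is noticed, the result falls out without invoking Lemma \ref{lemma:SZ}(2) a second time; that lemma has already done its work inside formula (3) of Lemma \ref{lemma:hatalmostcomplexstructures}.

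Concretely, the plan is as follows. First, I apply $J_2$ to $(A,B)\hat{}$, obtaining $(-JS_Z^{-1}E_Z B,\ -E_Z^{-1}S_Z J A)\hat{}$. Then I apply $J_1$, which swaps the two slots and introduces the operators $-S_Z^{-1}E_Z$ and $E_Z^{-1}S_Z$ respectively. In the first slot I get
\[
-S_Z^{-1}E_Z\bigl(-E_Z^{-1}S_Z J A\bigr)=\bigl(S_Z^{-1}E_Z E_Z^{-1}S_Z\bigr)JA=JA,
\]
and in the second slot I get
\[
E_Z^{-1}S_Z\bigl(-JS_Z^{-1}E_Z B\bigr)=-E_Z^{-1}S_Z J S_Z^{-1} E_Z B,
\]
which matches the second coordinate of $J_3(A,B)\hat{}$ exactly. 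Thus $J_1 J_2 (A,B)\hat{} = J_3 (A,B)\hat{}$.

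For $J_2 J_1 = -J_3$, the calculation is symmetric: applying $J_1$ gives $(-S_Z^{-1}E_Z B,\ E_Z^{-1}S_Z A)\hat{}$, and then applying $J_2$ yields first component $-J S_Z^{-1}E_Z\bigl(E_Z^{-1}S_Z A\bigr) = -JA$ and second component $-E_Z^{-1}S_Z J\bigl(-S_Z^{-1}E_Z B\bigr) = +E_Z^{-1}S_Z J S_Z^{-1}E_Z B$, which is precisely $-J_3(A,B)\hat{}$. Since every tangent vector to $\O$ is of the form $(A,B)\hat{}$ for some $(A,B)\in \m_u\times \m_u$, both identities hold globally. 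The ``main obstacle'' is really only the bookkeeping of signs and slot positions; the crucial algebraic input (the commutation identity for $J$ with the $S_Z$-operators) is already packaged into the third formula of Lemma \ref{lemma:hatalmostcomplexstructures}, so no further nontrivial computation is needed here.
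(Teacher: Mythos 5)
Your proof is correct and is exactly the computation the paper intends when it asserts that Lemma \ref{lemma:hatalmostcomplexstructures} ``immediately yields'' the quaternionic relations; you have simply written out the substitution into the `hat' formulas explicitly, and the cancellation $S_Z^{-1}E_Z\circ E_Z^{-1}S_Z=I$ works just as you describe.
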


Lemma \ref{lemma:relatevectorfields} allows us to express the almost complex structures in Lemma \ref{lemma:hatalmostcomplexstructures} for the `tilde' and `sharp' tangent vectors.  
The computations are trivial. 
\begin{lemma}\label{lemma:tildesharpJ}
For the `tilde' tangent vectors:
\begin{enumerate}
\item $J_1(A,B)\tilde{}=(-B,A)\tilde{}$
\item $J_2(A,B)\tilde{}=(-S_ZJS_Z^{-1}B,-S_ZJS_Z^{-1}A)\tilde{}$
\item $J_3(A,B)\tilde{}=(S_ZJS_Z^{-1}A,-S_ZJS_Z^{-1}B)\tilde{}$
\end{enumerate}
For the `sharp' tangent vectors:
\begin{enumerate}
\item $J_1(A,B)^\sharp=(-S_Z^{-1}S_{JZ}^{-1}B,S_{JZ}S_ZA)^\sharp   $
\item $J_2(A,B)^\sharp=(-JS_Z^{-1}S_{JZ}^{-1}B,-S_{JZ}S_ZJA)^\sharp   $
\item $J_3(A,B)^\sharp=(JA,-JB)^\sharp  $
\end{enumerate}
with identical formulae for the `flat' tangent vectors.   (Recall that for the `sharp' tangent vectors, we are considering tangent vectors at $[g_u,W]$, where $W=-iP(Z)$.)  \qed
\end{lemma}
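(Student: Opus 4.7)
The proof will be a direct change-of-variables computation, using Lemma \ref{lemma:relatevectorfields} as a dictionary between the four parametrizations and Lemma \ref{lemma:hatalmostcomplexstructures} as the source of all nontrivial content. The key observation, exploited throughout, is that $\Phi$ and $\Theta^{-1}\circ\Phi$ are $J_i$-holomorphic \emph{by construction} (Definition \ref{defn:complexstructures}), so $J_i$ commutes with $d\Phi$ and $d\Theta$.

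First I would handle the tilde vectors. Inverting Lemma \ref{lemma:relatevectorfields}(1) gives $(A,B)\tilde{} = (S_Z^{-1}A, E_Z^{-1}B)\hat{}$, which is legitimate since $S_Z$ and $E_Z$ are invertible on $\m_0$ (as noted in \S\ref{s:eigen}). I then apply each $J_i$ using Lemma \ref{lemma:hatalmostcomplexstructures} and convert back using Lemma \ref{lemma:relatevectorfields}(1). For $J_1$, the $S_Z^{\pm 1}$ and $E_Z^{\pm 1}$ factors cancel and one recovers $(-B,A)\tilde{}$. For $J_2$, one gets $\bigl(S_Z\cdot(-J S_Z^{-1}B),\, E_Z\cdot(-E_Z^{-1}S_Z J S_Z^{-1}A)\bigr)\tilde{} = (-S_Z J S_Z^{-1}B,\, -S_Z J S_Z^{-1}A)\tilde{}$, and the $J_3$ computation is analogous. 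Note that the operator $S_Z J S_Z^{-1}$ already appears naturally in the hat formula for $J_3$, so no invocation of Lemma \ref{lemma:SZ}(2) is needed here.

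Next, for the sharp vectors, I would invert Lemma \ref{lemma:relatevectorfields}(2) to write $(A,B)^\sharp = d\Phi\bigl(A, E_Z^{-1}S_{JZ}^{-1}B\bigr)\hat{}$, which is valid since $S_{JZ}$ and $E_Z$ are invertible on $\m_0$. Using that $J_i$ commutes with $d\Phi$, I get $J_i(A,B)^\sharp = d\Phi\bigl(J_i(A, E_Z^{-1}S_{JZ}^{-1}B)\hat{}\bigr)$; applying Lemma \ref{lemma:hatalmostcomplexstructures} and then Lemma \ref{lemma:relatevectorfields}(2) once more, the $E_Z$-factors in the second slot cancel and yield the stated formulas. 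For $J_3$ the identity is already Lemma \ref{lemma:J3}(3). Finally, for flat vectors, $d\Theta$ is $J_i$-holomorphic by Definition \ref{defn:complexstructures}, and Lemma \ref{lemma:relatevectorfields}(3) says $d\Theta$ intertwines flat and sharp tangent vectors with the identity dictionary $(A,B)^\flat \leftrightarrow (A,B)^\sharp$; hence the flat formulas literally coincide with the sharp ones.

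There is no substantive obstacle: every simplification reduces to a cancellation of $E_Z E_Z^{-1}$ or $S_Z S_Z^{-1}$. The only subtlety worth noting is that I would need to verify that the operators I am inverting act as bijections on the relevant real subspaces of $\m$, which is immediate from Theorem \ref{theorem:eigenvaluetheorem} since the eigenvalues of $E_Z$, $E_{JZ}$, $S_Z$, $S_{JZ}$ have the form $\sinh(r)/r$ and $\cosh(r)$ for $r \geq 0$, all strictly positive.
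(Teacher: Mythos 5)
Your proposal is correct and is exactly the argument the paper intends: it states immediately before the lemma that the formulas follow by transporting Lemma \ref{lemma:hatalmostcomplexstructures} through the dictionary of Lemma \ref{lemma:relatevectorfields}, with the computations left as "trivial." Your cancellations of $E_ZE_Z^{-1}$ and $S_ZS_Z^{-1}$ check out in every case, and your remarks on invertibility (via Theorem \ref{theorem:eigenvaluetheorem}) and on the flat case (identity dictionary via $d\Theta$) supply precisely the justification the paper omits.
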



\section{Hyperk\"ahler structure}\label{s:hyper}

In this section, we deduce formulas for the metric and for the real symplectic forms $\omega_1$, $\omega_2$, $\omega_3$, and some additional facts.  The starting point is to identify $i(\omega_2+i\omega_3)$ with the KKS form on $\O$; it turns out that $\omega_1+i\omega_2$ gives the usual holomorphic symplectic form on $\twist GQ{\m^-}$.  

Using (\cite{H2}, Lemma 6.8), the hyperk\"ahler structure on $\O$ is established once the following has been verified:

\begin{prop}\label{prop:existence}

On $\O$, 
\begin{enumerate}
\item There exists a pair of  complex structures $(J_2,J_3)$ on $\O$ whose associated almost complex structures anticommute. 
\item There exist closed real 2-forms $\omega_2$, $\omega_3$ on $\O$ that are $J_2$- and $J_3$-invariant, respectively. 
\item For all  vectors $v$ and $w$ tangent to $\O$ at a point, we have 
$\omega_2(v,J_2w)=\omega_3(v,J_3w)$. 
\item The (real symmetric bilinear) form defined by $m(v,w)=\omega_2(v,J_2w)=\omega_3(v,J_3w)$
is positive-definite. 
\end{enumerate}
\end{prop}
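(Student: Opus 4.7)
The plan is to realize $\omega_2$ and $\omega_3$ as the real and imaginary parts (with suitable signs) of the two canonical holomorphic symplectic forms already available on $\O$, and then verify each of the four conditions by direct computation using the operator identities from \S\ref{s:computation} and the tangent-vector formulas from \S\ref{ss:tangents}.  Part (1) is immediate: Lemma \ref{lemma:quaternionic} gives $J_1 J_2 = J_3 = -J_2 J_1$, and combined with $J_1^2 = J_2^2 = -I$ this forces $J_2 J_3 = J_2(J_1 J_2) = (J_2 J_1) J_2 = -J_3 J_2$.

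For Part (2), I would take $-\omega_3 + i\omega_2$ to equal the KKS holomorphic symplectic form on $\O$ (which is $J_1$-holomorphic by Definition \ref{defn:complexstructures}(1)) and take $\omega_1 + i\omega_2$ to equal the canonical $J_3$-holomorphic symplectic form on $\twist{G}{Q}{\m^-}$, transported to $\O$ via $\Theta^{-1}\circ\Phi$.  Both forms are holomorphic symplectic, hence closed, so their real and imaginary parts are closed real 2-forms.  The nontrivial coherence point is that the two $\omega_2$'s must coincide; I would verify this by computing each in the `hat' basis, using Lemma \ref{lemma:decomposition} to express $\Ad_{\exp Z}\Upsilon$ in terms of $E_Z$ and $F_Z$, and using Lemmas \ref{lemma:describePhi} and \ref{lemma:differentialP} to push the cotangent-bundle form through $\Phi$, with the KKS side handled via Lemma \ref{lemma:killingform}(1).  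The $J_2$- and $J_3$-invariance statements then follow by substituting the formulas of Lemma \ref{lemma:hatalmostcomplexstructures} (equivalently Lemma \ref{lemma:tildesharpJ}) into these Killing-form expressions and invoking the self-adjointness in Lemma \ref{lemma:killingform}(3) together with Lemma \ref{lemma:SZ}(2).  Part (3) is then an algebraic identity: expanding both $\omega_2(v, J_2 w)$ and $\omega_3(v, J_3 w)$ in a common basis and applying Lemma \ref{lemma:order2}(2) with Corollary \ref{cor:order2again} should collapse both sides to the same symmetric bilinear expression.

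The main obstacle is Part (4), the positive-definiteness of $m$, which I would attack eigenspace by eigenspace.  By Proposition \ref{prop:squarescommute}, $\ad_Z^2$ and $\ad_{JZ}^2$ commute on $\m$ and are simultaneously diagonalizable with nonnegative real eigenvalues on $\m_u$.  On each joint eigenspace, Theorem \ref{theorem:eigenvaluetheorem} identifies $S_Z$, $S_{JZ}$, $E_Z$, $E_{JZ}$ as multiplication by strictly positive scalars of the form $\cosh(\sqrt{\nu_i})$ and $\sinh(\sqrt{\nu_i})/\sqrt{\nu_i}$.  Writing $m$ in the `tilde' basis then reduces, on each eigenspace, to a strictly positive multiple of $-\kappa|_{\m_u\times\m_u}$, which is positive-definite because $\g_u$ is compact semisimple.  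Orthogonality between distinct joint eigenspaces under $\kappa$ (Lemma \ref{lemma:killingform}(4)) lets this positive-definiteness assemble over the full tangent space at $\Ad_{g_u}\Ad_{\exp Z}\Upsilon$, and $G_u$-invariance propagates it to all of $\O$.  The main bookkeeping care is tracking the signs introduced by $\m_0 = i\m_u$ and the factor of $J = \ad_\Upsilon$, and confirming that no cancellation occurs among the positive coefficient combinations.
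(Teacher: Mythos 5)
Your proposal is correct and follows essentially the same route as the paper: $\omega_2$ and $\omega_3$ are extracted as the real and imaginary parts of the KKS form (so closedness is automatic), invariance and condition (3) are checked with Lemma \ref{lemma:order2}(2), Corollary \ref{cor:order2again}, and Lemma \ref{lemma:killingform}(3), and positivity of $m$ follows from simultaneous diagonalization (Proposition \ref{prop:squarescommute}, Theorem \ref{theorem:eigenvaluetheorem}) together with the negative-definiteness of $\kappa$ on $\g_u$ and the orthogonality in Lemma \ref{lemma:killingform}(4). The only divergence is your coherence check identifying the KKS $\omega_2$ with the $\omega_2$ coming from the canonical form on $\twist{G}{Q}{\m^-}$; this is the content of the later Proposition \ref{prop:canonicalforms} and is not needed here, since conditions (1)--(4) involve only $J_2$, $J_3$, $\omega_2$, $\omega_3$.
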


\begin{proof}
(1) was verified in Lemma \ref{lemma:quaternionic}. For (2), {\it we define $i(\omega_2+i\omega_3)$ to be the complex Kostant-Kirillov-Souriau form on $\O$}, which   is the form (for $g\in G$ and $A_1,B_1,A_2,B_2\in\g_u)$ that at the point $\Ad_g\Upsilon\in\O$, sends the pair of tangent vectors 
$$\left(\left.\frac d{dt}\right|_0\Ad_{\exp(t(A_1+iB_1))}\Ad_g\Upsilon,\left.\frac d{dt}\right|_0\Ad_{\exp(t(A_2+iB_2))}\Ad_g\Upsilon\right)$$
to $\kappa\left(\Ad_g\Upsilon,[A_1+iB_1,A_2+iB_2]\right)$.  Thus $\omega_2$ (resp. $\omega_3$) is the imaginary part (resp. the opposite of the real part) of the KKS form.  Using the `tilde'  tangent vectors, we 
immediately obtain the expressions for $\omega_2$ and $\omega_3$ that appear in Proposition \ref{prop:metricintilde} below  
(using Lemma \ref{lemma:killingform}(1) to obtain the second version of each formula). Since the KKS form is closed, it follows that $\omega_2$ and $\omega_3$ are closed.  
Using 
 Lemmas \ref{lemma:order2}(2) and \ref{lemma:killingform}(3) and Corollary \ref{cor:order2again}, 
 it is easy to prove the required invariance of these forms.  Using the same tools along with  Lemma \ref{lemma:order2}(3), one can then verify 
 the equation in (3) and the formulas for $m$ in Proposition \ref{prop:metricintilde}.  The metric is nondegenerate and positive; this follows from the fact that the Killing form is negative-definite on $\g_u$, along with  Lemma \ref{lemma:killingform}(4) and Theorem \ref{theorem:eigenvaluetheorem}.  \end{proof}

It is straightforward to  obtain the following explicit formulae for $m$, $\omega_1$, $\omega_2$,  $\omega_3$, and certain holomorphic symplectic forms, given $g_u$, $Z$, and $W$ as earlier.
 The formulae involve the operators $E_Z$, $S_Z$, $S_{JZ}$; the effect of these operators is  easy to understand in light of Section \ref{s:eigen}.  Note that $\omega_1$ and $\omega_2$ are most easily described using the $\sharp$ or $\flat$ tangent vectors, whereas $\omega_3$ is more natural in the $\sim$ vectors.  

\begin{prop}\label{prop:metricinhat}
On the pair of tangent vectors $(A_1,B_1)\hat{}, (A_2,B_2)\hat{}$, we have 
\begin{itemize}
\item $m=-\kappa\left(\Upsilon,[S_ZA_1,S_ZJA_2]+[E_ZB_1,S_ZJS_Z^{-1}E_ZB_2]\right)  
      \newline\phantom{.}\quad  =-\kappa\left(S_{JZ}S_ZA_1,A_2\right)-\kappa\left(S_{JZ}S_Z^{-1}E_ZB_1,E_ZB_2\right)$
\item $\omega_1=
\kappa\left(\Upsilon,[E_ZB_1,S_ZJA_2]-[A_1,S_ZJE_ZB_2]\right)    
     \newline\phantom{.}\quad        =-\kappa\left(A_1,S_{JZ}E_ZB_2\right)+\kappa\left(S_{JZ}E_ZB_1,A_2\right)$
\item $\omega_2=\kappa\left(\Upsilon,[S_ZA_1,E_ZB_2]+[E_ZB_1,S_ZA_2]\right)   
           =\kappa\left(JS_ZA_1,E_ZB_2\right)-\kappa\left(E_ZB_1,JS_ZA_2\right)$
\item $\omega_3=\kappa\left(\Upsilon,[E_ZB_1,E_ZB_2]-[S_ZA_1,S_ZA_2]\right)  
          =\kappa\left(JE_ZB_1,E_ZB_2\right)-\kappa\left(JS_ZA_1,S_ZA_2\right)$
\item $\omega_2+i\omega_3= -i\kappa\left(\Upsilon, [S_ZA_1+iE_ZB_1,S_ZA_2+iE_ZB_2]\right)
        \newline\phantom{.}\quad         =-i\kappa\left(J(S_ZA_1+iE_ZB_1),S_ZA_2+iE_ZB_2\right)$ \qed
\end{itemize}
\end{prop}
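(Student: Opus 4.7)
The plan is to piggyback on the groundwork already laid for Proposition~\ref{prop:existence}. There, $i(\omega_2+i\omega_3)$ is defined to be the KKS form, whose expression in the tilde tangent vectors is immediate; $m$ is recovered from $m(v,w)=\omega_3(v,J_3w)$; and $\omega_1$ is recovered from $\omega_1(v,w)=-m(v,J_1w)$, the general identity $\omega_{(\lambda_1,\lambda_2,\lambda_3)}(v,w)=-m(v,J_{(\lambda_1,\lambda_2,\lambda_3)}w)$ specialized to $J_1$. Converting from tilde to hat is handled by Lemma~\ref{lemma:relatevectorfields}(1), and the ``second versions'' of each displayed formula are obtained by shuffling $\Upsilon$ and the power-series operators across the Killing form via Lemma~\ref{lemma:killingform}.

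First I would derive $\omega_2$, $\omega_3$, and $\omega_2+i\omega_3$. By Lemma~\ref{lemma:relatevectorfields}(1), $(A_i,B_i)\hat{}=(S_ZA_i,E_ZB_i)\tilde{}$, so the tilde-form identity $\omega_2+i\omega_3=-i\kappa(\Upsilon,[A_1+iB_1,A_2+iB_2])$ becomes the first displayed hat formula simply by replacing $A_i$ by $S_ZA_i$ and $B_i$ by $E_ZB_i$. Extracting imaginary and minus-real parts gives the first-version $\omega_2$ and $\omega_3$ formulas. The second versions then follow by peeling $\Upsilon$ off the bracket with Lemma~\ref{lemma:killingform}(1) (using $\ad_\Upsilon|_\m=J$), and by a single application of Lemma~\ref{lemma:killingform}(2) to place the $J$ on the correct factor to match the sign conventions of the proposition.

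For $m$ I would substitute $J_3(A_2,B_2)\hat{}=(JA_2,-E_Z^{-1}S_ZJS_Z^{-1}E_ZB_2)\hat{}$ from Lemma~\ref{lemma:hatalmostcomplexstructures}(3) into the first $\omega_3$ formula; the inner $E_Z$ cancels the outer $E_Z^{-1}$ in the $B$-slot, and the $A$-slot produces $S_ZJA_2$, yielding the first-version $m$. The second version comes from a four-move sequence that will recur throughout: peel $\Upsilon$ off via Lemma~\ref{lemma:killingform}(1); move $S_Z$ across $\kappa$ via Lemma~\ref{lemma:killingform}(3); use $J$-antisymmetry Lemma~\ref{lemma:killingform}(2); and collapse $JS_ZJ$ to $-S_{JZ}$ using Lemma~\ref{lemma:order2}(2) applied to $p=S_Z$, finishing with the commutation $S_ZS_{JZ}=S_{JZ}S_Z$ from Corollary~\ref{cor:order2again}. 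Finally, substituting $J_1(A_2,B_2)\hat{}=(-S_Z^{-1}E_ZB_2,E_Z^{-1}S_ZA_2)\hat{}$ from Lemma~\ref{lemma:hatalmostcomplexstructures}(1) into the first-version $m$ and simplifying the $S_Z\circ S_Z^{-1}$ and $E_Z\circ E_Z^{-1}$ cancellations gives $\omega_1=\kappa(\Upsilon,[E_ZB_1,S_ZJA_2]-[S_ZA_1,S_ZJS_Z^{-1}E_ZB_2])$; the same four-move sequence identifies $\kappa(\Upsilon,[S_ZA_1,S_ZJS_Z^{-1}E_ZB_2])$ with $\kappa(\Upsilon,[A_1,S_ZJE_ZB_2])$, matching the first displayed $\omega_1$, and then delivers the second version with $S_{JZ}E_Z$.

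The main obstacle is pure bookkeeping: executing the four moves in the correct order, so that each $JS_ZJ$ gets collapsed to $-S_{JZ}$ \emph{before} any further commutation is attempted, and keeping careful track of where the operators $E_Z$, $S_Z$, $S_Z^{-1}$, and $J$ sit inside each Killing pairing. No techniques beyond Proposition~\ref{prop:existence} and the computational material of Section~\ref{s:computation} are needed.
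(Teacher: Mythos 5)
Your proposal is correct and follows exactly the route the paper sketches (the proposition is left as ``straightforward,'' with the method indicated in the proof of Proposition \ref{prop:existence}): define $\omega_2+i\omega_3$ via the KKS form in tilde vectors, convert to hat vectors with Lemma \ref{lemma:relatevectorfields}(1), recover $m$ from $\omega_3(v,J_3w)$ and $\omega_1$ from $-m(v,J_1w)$, and obtain the second versions by shuffling operators across $\kappa$ and collapsing $JS_ZJ=-S_{JZ}$. The individual computations you describe (the $E_Z E_Z^{-1}$ and $S_Z S_Z^{-1}$ cancellations, and the identification of $\kappa(\Upsilon,[S_ZA_1,S_ZJS_Z^{-1}E_ZB_2])$ with $\kappa(\Upsilon,[A_1,S_ZJE_ZB_2])$) all check out.
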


\begin{prop}\label{prop:metricintilde}
On the pair of tangent vectors $(A_1,B_1)\tilde{}, (A_2,B_2)\tilde{}$, we have 
\begin{itemize}
\item $m= -\kappa\left(\Upsilon,[A_1,S_ZJS_Z^{-1}A_2]+[B_1,S_ZJS_Z^{-1}B_2]\right) 
  \newline\phantom{.}\quad      =-\kappa\left(S_{JZ}S_Z^{-1}A_1,A_2\right)-\kappa\left(S_{JZ}S_Z^{-1}B_1,B_2\right)  $
\item $\omega_1=\kappa\left(\Upsilon,[B_1,S_ZJS_Z^{-1}A_2]-[A_1,S_ZJS_Z^{-1}B_2]\right)    
      \newline\phantom{.}\quad       =\kappa\left(S_{JZ}S_Z^{-1}B_1,A_2\right)-\kappa\left(A_1,S_{JZ}S_Z^{-1}B_2\right)$
\item $\omega_2=\kappa\left(\Upsilon,[A_1,B_2]+[B_1,A_2]\right)   
          =\kappa\left(JA_1,B_2\right)-\kappa\left(B_1,JA_2\right)$
\item $\omega_3=\kappa\left(\Upsilon,[B_1,B_2]-[A_1,A_2]\right)  
         =\kappa\left(JB_1,B_2\right)-\kappa\left(JA_1,A_2\right)$
\item $\omega_2+i\omega_3=   -i\kappa\left(\Upsilon,[A_1+iB_1,A_2+iB_2]\right)   
         =-i\kappa\left(J(A_1+iB_1),A_2+iB_2\right) $  \qed
\end{itemize}
\end{prop}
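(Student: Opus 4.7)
The plan is to derive the five formulas directly, exploiting the fact that the `tilde' tangent vectors give the most natural description of the KKS form.  All remaining formulas then follow from the quaternionic structure (Lemma \ref{lemma:tildesharpJ}) and standard manipulations of the Killing form (Lemma \ref{lemma:killingform}).  An alternative route is to change variables via Lemma \ref{lemma:relatevectorfields}(1), writing a tilde vector $(A,B)\tilde{}$ as the hat vector $(S_Z^{-1}A, E_Z^{-1}B)\hat{}$ and substituting into Proposition \ref{prop:metricinhat}; I will sketch the direct approach here.

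First I would derive the formula for $\omega_2 + i\omega_3$ from its definition as $-i$ times the KKS form on $\O$.  Rewriting
$$\Ad_{g_u}\Ad_{\exp Z}\Ad_{\exp t(A+iB)}\Upsilon = \Ad_{\exp(t\Ad_{g_u\exp Z}(A+iB))}\Ad_{g_u}\Ad_{\exp Z}\Upsilon,$$
the tilde vector $(A,B)\tilde{}$ is identified with the value at $p = \Ad_{g_u}\Ad_{\exp Z}\Upsilon$ of the fundamental vector field generated by $\Ad_{g_u\exp Z}(A+iB)$.  The KKS formula together with $\Ad$-invariance of $\kappa$ then yields
$$\omega_2 + i\omega_3 = -i\kappa\bigl(\Upsilon,\,[A_1+iB_1,\,A_2+iB_2]\bigr).$$
Taking real and imaginary parts produces the bracket expressions for $\omega_2$ and $\omega_3$; the alternative forms with $J$ applied to an argument follow from Lemma \ref{lemma:killingform}(1) via $JX = [\Upsilon,X]$ for $X\in\m$.

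For the metric I would use $m(v,w) = \omega_3(v, J_3 w)$, combined with the formula $J_3(A,B)\tilde{} = (S_ZJS_Z^{-1}A,\,-S_ZJS_Z^{-1}B)\tilde{}$ from Lemma \ref{lemma:tildesharpJ}(3), to obtain
$$m = -\kappa(JA_1,\,S_ZJS_Z^{-1}A_2) - \kappa(JB_1,\,S_ZJS_Z^{-1}B_2).$$
The key algebraic identity is $S_{JZ} = -JS_ZJ$, which comes from Lemma \ref{lemma:order2}(2) applied to the power series defining $S$; this rearranges to $S_ZJS_Z^{-1} = JS_{JZ}S_Z^{-1}$.  Combined with $\kappa(JX,JY) = \kappa(X,Y)$, the self-adjointness of $S_Z$ and $S_{JZ}$ under the Killing form (Lemma \ref{lemma:killingform}(3), which also applies to their inverses), and their mutual commutativity (Corollary \ref{cor:order2again}), one arrives at the claimed formula.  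The formula for $\omega_1$ then follows from $\omega_1(v,w) = -m(v,J_1 w)$ together with $J_1(A,B)\tilde{} = (-B,A)\tilde{}$ from Lemma \ref{lemma:tildesharpJ}(1).

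The main obstacle is purely bookkeeping: tracking the commutation identities among $J$, $S_Z$, $S_{JZ}$, and their inverses, together with the various adjointness statements for $\kappa$, and using carefully that $A_j, B_j \in \m_u$ so that $A_j + iB_j \in \m$.  No new geometric input is required beyond what was already in play for the `hat' formulas of Proposition \ref{prop:metricinhat}; each of the five identities amounts to a short independent computation.
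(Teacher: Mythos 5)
Your proposal is correct and follows essentially the same route as the paper: the paper likewise defines $i(\omega_2+i\omega_3)$ as the KKS form, reads off the $\omega_2$ and $\omega_3$ formulas directly on the `tilde' vectors (using Lemma \ref{lemma:killingform}(1) for the second version of each), and then obtains $m$ and $\omega_1$ from $m(v,w)=\omega_3(v,J_3w)$ and $\omega_1(v,w)=-m(v,J_1w)$ via Lemmas \ref{lemma:order2}(2), \ref{lemma:killingform}(3) and Corollary \ref{cor:order2again}. Your identification of $S_{JZ}=-JS_ZJ$ as the key identity is exactly the bookkeeping the paper leaves implicit.
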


\begin{prop}\label{prop:metricinsharp}
On the pair of tangent vectors $(A_1,B_1)^\sharp, (A_2,B_2)^\sharp$, we have  
\begin{itemize}
\item $m= -\kappa\left(\Upsilon,[S_ZA_1,S_ZJA_2]+[S_{JZ}^{-1}B_1,S_{JZ}^{-1}JB_2]\right)   
  \newline\phantom{.}\quad          =-\kappa\left(S_{JZ}S_ZA_1,A_2\right)-\kappa\left(S_{JZ}^{-1}S_Z^{-1}B_1,B_2\right)$
\item $\omega_1=
\kappa\left(\Upsilon,[B_1,JA_2]-[A_1,JB_2]\right)
         =-\kappa(A_1,B_2)+\kappa(B_1,A_2)$
\item $\omega_2= 
\kappa\left(\Upsilon,[A_1,B_2]+[B_1,A_2]\right)  
           =-\kappa(A_1,JB_2)+\kappa(JB_1,A_2)$
\item $\omega_3=\kappa\left(\Upsilon,[S_{JZ}^{-1}B_1,S_{JZ}^{-1}B_2]-[S_ZA_1,S_ZA_2]\right) 
     \newline\phantom{.}\quad       = \kappa\left(S_{JZ}^{-1}S_Z^{-1}JB_1,B_2\right)-\kappa\left(S_{JZ}S_ZJA_1,A_2\right)$
\item   $\omega_1+i\omega_2= 
\kappa\left(\Upsilon,[A_1,(iI-J)B_2]+[B_1,(iI+J)A_2]\right)  
 \newline\phantom{.}\quad        =-\kappa\left(A_1,B_2\right)+\kappa\left(B_1,A_2\right)-i\kappa\left(A_1,JB_2\right)+i\kappa\left(JB_1,A_2\right)$  \qed
\end{itemize}  
\end{prop}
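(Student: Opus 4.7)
My plan is to derive Proposition \ref{prop:metricinsharp} from Proposition \ref{prop:metricinhat} by pulling back along $\Phi$. Lemma \ref{lemma:relatevectorfields}(2) says $d\Phi(A,B)\hat{}=(A,S_{JZ}E_ZB)^\sharp$, so a sharp tangent vector $(A,B)^\sharp$ corresponds to the hat vector $(A,\,E_Z^{-1}S_{JZ}^{-1}B)\hat{}$. Since $\omega_1,\omega_2,\omega_3,m$ were defined as forms on $\O$ and merely re-expressed in terms of $\sharp$-vectors via $\Phi$, the formulas in Proposition \ref{prop:metricinsharp} are obtained by substituting $B_j\mapsto E_Z^{-1}S_{JZ}^{-1}B_j$ in each formula of Proposition \ref{prop:metricinhat} and then simplifying.

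The key algebraic facts I will use throughout are: (i) $S_{JZ}=JS_ZJ^{-1}$, which follows from Lemma \ref{lemma:order2}(2) applied termwise to the power series $S_Z$; (ii) $S_Z$ and $S_{JZ}$ commute, by Corollary \ref{cor:order2again}; (iii) $S_Z$, $S_{JZ}$, and their inverses are self-adjoint for $\kappa$, by Lemma \ref{lemma:killingform}(3); and (iv) $J$ is skew-adjoint for $\kappa|_{\m}$, by Lemma \ref{lemma:killingform}(2). Combined, these yield the single identity $J=S_{JZ}^{-1}JS_Z=S_ZJS_{JZ}^{-1}$, which is the workhorse of the calculation.

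I proceed form by form. For $\omega_1$ the substitution is immediate: the $E_Z^{-1}S_{JZ}^{-1}$'s cancel directly against the $S_{JZ}E_Z$ in the second formula of Proposition \ref{prop:metricinhat}, producing $-\kappa(A_1,B_2)+\kappa(B_1,A_2)$. For $\omega_2$ one gets $\kappa(JS_ZA_1,S_{JZ}^{-1}B_2)-\kappa(S_{JZ}^{-1}B_1,JS_ZA_2)$; moving $S_{JZ}^{-1}$ to the other factor (fact (iii)) and using $S_{JZ}^{-1}JS_Z=J$ collapses this to $-\kappa(A_1,JB_2)+\kappa(JB_1,A_2)$. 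For $m$ one gets $-\kappa(S_{JZ}S_ZA_1,A_2)-\kappa(S_{JZ}S_Z^{-1}S_{JZ}^{-1}B_1,S_{JZ}^{-1}B_2)$; using $S_Z$–$S_{JZ}$ commutativity the second term becomes $-\kappa(S_Z^{-1}B_1,S_{JZ}^{-1}B_2)=-\kappa(S_{JZ}^{-1}S_Z^{-1}B_1,B_2)$. For $\omega_3$ one gets $\kappa(JS_{JZ}^{-1}B_1,S_{JZ}^{-1}B_2)-\kappa(JS_ZA_1,S_ZA_2)$; applying (i)–(iv) rewrites $S_{JZ}^{-1}JS_{JZ}^{-1}=JS_Z^{-1}S_{JZ}^{-1}=S_Z^{-1}S_{JZ}^{-1}J$ and $S_ZJS_Z=S_{JZ}S_ZJ$, yielding the stated expression. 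Finally, the formula for $\omega_1+i\omega_2$ is just the sum of the preceding two.

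The only mildly delicate step is the clean rewriting $S_{JZ}^{-1}JS_Z=J$ and its variants; once that identity is isolated as a lemma-level fact from $S_{JZ}=JS_ZJ^{-1}$ and the commutativity of $S_Z$ with $S_{JZ}$, every simplification above becomes a one-line manipulation, so I do not anticipate a genuine obstacle beyond careful bookkeeping of where $E_Z$, $S_Z$, $S_{JZ}$, and $J$ sit relative to the two arguments of $\kappa$.
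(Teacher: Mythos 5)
Your proposal is correct and matches the paper's (implicit) route: the paper gives no written proof beyond declaring the formulae ``straightforward,'' and the intended derivation is exactly your substitution $B_j\mapsto E_Z^{-1}S_{JZ}^{-1}B_j$ into Proposition \ref{prop:metricinhat} via Lemma \ref{lemma:relatevectorfields}(2), with the conjugation identity $S_{JZ}J=JS_Z$ and the self-adjointness/commutativity facts doing the simplification. The only thing you leave tacit is the first-listed $\kappa(\Upsilon,[\cdot,\cdot])$ version of each formula, which follows from your second versions by one application of associativity of the Killing form (Lemma \ref{lemma:killingform}(1)) together with Lemma \ref{lemma:SZ}(2).
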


The formulae in Proposition \ref{prop:metricinsharp} are also valid for the `flat' vector fields.  However, it is also convenient to express these formulae in terms of $A_1^+,A_1^-,B_2^+,B_2^-$.  Recall that by Lemma \ref{lemma:SZ}(3), $S_{JZ}\circ S_Z$ preserves $\m^+$.  

\begin{prop}\label{prop:metricinflat}
On the pair of tangent vectors $(A_1,B_1)^\flat, (A_2,B_2)^\flat$, we have  
\begin{itemize}
\item $m= -2\Re\left(\kappa\left(S_{JZ}S_ZA_1^+,A_2^-\right)+\kappa\left(S_{JZ}^{-1}S_Z^{-1}B_1^+,B_2^-\right)\right)      $
\item $\omega_1=2\Re\left(\kappa\left(B_1^+,A_2^-\right)-\kappa\left(A_1^+,B_2^-\right)\right)   $
\item $\omega_2=   -2\Im\left(\kappa\left(A_1^+,B_2^-\right)+\kappa\left(B_1^+,A_2^-\right)\right)$
\item $\omega_3= 2\Im\left(\kappa\left(S_{JZ}S_ZA_1^+,A_2^-\right)-\kappa\left(S_{JZ}^{-1}S_Z^{-1}B_1^+,B_2^-\right)\right)  $
\item  $\omega_1+i\omega_2= 2\kappa\left(B_1^-,A_2^+\right)-2\kappa\left(A_1^+,B_2^-\right)   $   \qed
\end{itemize}
\end{prop}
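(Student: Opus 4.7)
The plan is to reduce everything to Proposition \ref{prop:metricinsharp}. By Lemma \ref{lemma:relatevectorfields}(3), the differential $d\Theta$ identifies $(A,B)^\flat$ with $(A,B)^\sharp$, so (once we recall the explicit bookkeeping of which $\kappa$--pairings vanish on $\m^\pm\times\m^\pm$) each flat formula is just a rewrite of the corresponding sharp formula in terms of the $\m^+$, $\m^-$ components. The only ingredients required are Lemma \ref{lemma:killingform}(6) and its relatives, plus the fact (Lemma \ref{lemma:SZ}(3) and Corollary \ref{cor:order2again}) that $S_Z$ and $S_{JZ}$, being power series with even-order terms, commute with $J$ and hence preserve the decomposition $\m=\m^+\oplus\m^-$.

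The core identity to establish, for $X,Y\in\m_u$, is
\[
\kappa(X,Y)=2\Re\kappa(X^+,Y^-),\qquad \kappa(JX,Y)=-2\Im\kappa(X^+,Y^-),
\]
which follows at once from $Y^+=\theta Y^-$, the antiholomorphy of $\theta$, and $JX=iX^+-iX^-$. Applying these to the four scalar entries in Proposition \ref{prop:metricinsharp} yields $\omega_1=2\Re\bigl(\kappa(B_1^+,A_2^-)-\kappa(A_1^+,B_2^-)\bigr)$ and $\omega_2=-2\Im\bigl(\kappa(A_1^+,B_2^-)+\kappa(B_1^+,A_2^-)\bigr)$ directly. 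For $m$ and $\omega_3$, I would first commute the $S$ operators through the $\m^+$ projection: since $S_{JZ}S_Z$ preserves $\m^+$, we have $(S_{JZ}S_Z A_1)^+=S_{JZ}S_Z A_1^+$, and likewise for $S_{JZ}^{-1}S_Z^{-1}$. Combining this with the two identities above gives the stated formulas for $m$ and $\omega_3$ (the latter picks up an imaginary part because of the $J$ inside the sharp formula).

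The complex form $\omega_1+i\omega_2$ requires a short verification but no real difficulty: expanding $-\kappa(A_1,B_2)-i\kappa(A_1,JB_2)$ in $\m^+,\m^-$ components one finds a cancellation that leaves precisely $-2\kappa(A_1^+,B_2^-)$, and similarly $\kappa(B_1,A_2)+i\kappa(JB_1,A_2)$ collapses to $2\kappa(B_1^-,A_2^+)$. Summing produces the claimed expression. The only subtlety in the whole argument, and what I expect to be the main bookkeeping trap, is keeping track of which pairings $\kappa(\cdot^+,\cdot^-)$ versus $\kappa(\cdot^-,\cdot^+)$ survive and which are their complex conjugates; but this is handled uniformly by Lemma \ref{lemma:killingform}(6) together with the orthogonality of $\m^+$ and $\m^-$ under $\kappa$. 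No new computation beyond what is already developed in Sections \ref{s:computation} and \ref{s:geometry} is needed.
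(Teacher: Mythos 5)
Your proposal is correct and is exactly the route the paper intends: the proposition is stated with no separate proof precisely because, as the preceding remark says, the flat formulae are the sharp formulae of Proposition \ref{prop:metricinsharp} rewritten in $\m^{\pm}$ components, using $d\Theta(A,B)^\flat=(A,B)^\sharp$, the identities $\kappa(X,Y)=2\Re\kappa(X^+,Y^-)$ and $\kappa(JX,Y)=-2\Im\kappa(X^+,Y^-)$ for $X,Y\in\m_u$, and the fact that $S_{JZ}S_Z$ preserves $\m^+$. All of your component computations (including the cancellation giving $\omega_1+i\omega_2=2\kappa(B_1^-,A_2^+)-2\kappa(A_1^+,B_2^-)$) check out.
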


In the remainder of this section, we present some facts about the symplectic forms and K\"ahler potentials.

\begin{prop}\label{prop:canonicalforms}
Under the identification of $\twist{G}{Q}{\m^-}$ as the complex contangent bundle of $G/Q$, 
\begin{enumerate}
\item At $[g,W^-]\in\twist{G}{Q}{\m^-}$, the canonical 1-form $\lambda_\C$ on $\twist{G}{Q}{\m^-}$ sends $(A,B)^\flat$ to $2\kappa(A^+,W^-)$. 
\item The canonical (complex) 2-form $d\lambda_\C$ equals $\omega_1+i\omega_2$. 
\end{enumerate}
\end{prop}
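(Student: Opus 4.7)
The plan is to deduce Part (1) directly from the definition of the tautological 1-form on a cotangent bundle, and to compute $d\lambda_\C$ in Part (2) using a well-chosen local coordinate chart that exploits the abelianity of $\m^+$.

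For Part (1), I would recall that on any cotangent bundle $T^*M$, the tautological 1-form $\lambda$ is characterized by $\lambda_\xi(v)=\xi\bigl(d\pi_\xi(v)\bigr)$, where $\pi\colon T^*M\to M$ is the projection and $\xi$ is the evaluation point.  Under the identification of $\twist{G}{Q}{\m^-}$ with the complex cotangent bundle of $G/Q$, the projection sends $[g,W^-]$ to $gQ$ and carries the flat tangent vector $(A,B)^\flat$ to $\left.\frac{d}{dt}\right|_0 g\exp(tA^+)Q$.  The description of cotangent vectors in \S\ref{sss:twotwisted} then evaluates $[g,W^-]$ on this tangent vector to produce $2\kappa(A^+,W^-)$, which is exactly Part (1).

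For Part (2), I would use the Cartan formula $d\lambda_\C(V_1,V_2)=V_1\lambda_\C(V_2)-V_2\lambda_\C(V_1)-\lambda_\C([V_1,V_2])$ after extending the two $\flat$-tangent vectors to local vector fields.  Near $[g,W^-]$, I introduce the chart $\phi(x,y)=[g\exp(x),\,W^-+y]$ with $(x,y)\in U\times\m^-$ and $U$ a small neighborhood of $0$ in $\m^+$.  Since $\q=\k\oplus\m^-$ is complementary to $\m^+$, the map $\phi$ is a diffeomorphism onto a neighborhood of $[g,W^-]$.  Extend each $(A_i,B_i)^\flat$ to the coordinate vector field $V_i|_{\phi(x,y)}:=\left.\frac{d}{dt}\right|_0\phi(x+tA_i^+,\,y+tB_i^-)$.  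The crucial observation is that $[\m^+,\m^+]=0$ forces $\ad_x(A_i^+)=0$, whence $E_{x,l}(A_i^+)=A_i^+$ and $\exp(x)^{-1}\exp(x+tA_i^+)$ agrees with $\exp(tA_i^+)$ to first order in $t$.  Consequently $V_i|_{\phi(x,y)}$ is again the $\flat$-tangent vector $(A_i,B_i)^\flat$ at the point $[g\exp(x),W^-+y]$, and applying Part (1) pointwise yields $\lambda_\C(V_i)(\phi(x,y))=2\kappa(A_i^+,W^-+y)$.  Differentiating along the other field gives $V_j\bigl(\lambda_\C(V_i)\bigr)=2\kappa(A_i^+,B_j^-)$, while $[V_1,V_2]=0$ because the $V_i$ are coordinate vector fields in $\phi$.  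The Cartan formula then delivers
\[
d\lambda_\C(V_1,V_2)=2\kappa(A_2^+,B_1^-)-2\kappa(A_1^+,B_2^-)=2\kappa(B_1^-,A_2^+)-2\kappa(A_1^+,B_2^-),
\]
which matches the formula for $\omega_1+i\omega_2$ recorded in Proposition \ref{prop:metricinflat}.

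The main obstacle will be justifying that the constant coordinate extension $V_i$ really is a $\flat$-vector at every nearby point; this is precisely where the abelianity of $\m^+$ is indispensable, since without it the Baker--Campbell--Hausdorff expansion of $\exp(x+tA_i^+)$ would contribute nonvanishing correction terms through $E_{x,l}$ that would need to be carried along.  Once this step is secured, the rest of the argument is routine cotangent-bundle geometry.
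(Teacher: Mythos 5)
Your proof is correct and follows essentially the same route as the paper: both arguments extend the $\flat$-vectors to commuting vector fields (you via a coordinate chart on the quotient, the paper via globally defined lifts to $G\times\m^-$), with $[\m^+,\m^+]=0$ supplying the vanishing bracket, and then apply the standard formula for the exterior derivative of a $1$-form and compare with Proposition \ref{prop:metricinflat}. The computation $d\lambda_\C(V_1,V_2)=2\kappa(A_2^+,B_1^-)-2\kappa(A_1^+,B_2^-)$ matches the paper's exactly.
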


\begin{proof}
Part (1) comes from the identification in Section \ref{ss:cotangenttwisted}.  For (2), we need to compute $d\lambda_\C$.  
To get around the fact that the `flat' tangent vectors do not give vector fields on $\twist{G}{Q}{\m^-}$, we work in $G\times\m^-$, whose quotient by $Q$ gives $\twist{G}{Q}{\m^-}$.  On $G\times \m^-$, for each $(A^+,B^-)\in \m^+\times\m^-$, we have a globally defined vector field $(A^+,B^-)^\heartsuit=\left.\frac{d}{dt}\right|_0\left(g\exp(tA^+),W^-+tB^-\right)$. We note that the bracket of two `heart' vector fields is zero; in the first component, we use that $[\m^+,\m^+]=0$, and the statement is obvious in the second component.  Also, note that 
for the   obvious lift of $\lambda_\C$ to $G\times \m^-$, we have
$(A_1^+,B_1^-)^\heartsuit \left(\lambda_\C(A_2^+,B_2^-)^\heartsuit\right)
=(A_1^+,B_1^-)^\heartsuit \left(2\kappa(A_2^+,W^-)\right)=2\kappa(A_2^+,B_1^-)$. 
Using the usual formula for the external derivative of a 1-form,\footnote{$d\lambda(X,Y)=X(\lambda(Y))-Y(\lambda(X))-\lambda([X,Y])$} we obtain 
$d\lambda_\C\left((A_1^+,B_1^-)^\heartsuit, (A_2^+,B_2^-)^\heartsuit\right)
=2\kappa(A_2^+,B_1^-)-2\kappa(A_1^+,B_2^-)$.  
Pushing this down to $\twist{G}{Q}{\m^-}$ and comparing with Proposition \ref{prop:metricinflat}, we obtain (2). 
\end{proof}

In a similar way, 
  
\begin{prop}\label{prop:canonicalforms2}
Under the identification of $\twist{G_u}{K_0}{\m_u}$ as the real contangent bundle of $G_u/K_0$, 
\begin{enumerate}
\item At $[g_u,W]\in\twist{G_u}{K_0}{\m_u}$, the canonical 1-form $\lambda_\R$ on $\twist{G_u}{K_0}{\m_u}$ sends $(A,B)^\sharp$ to $\kappa(A,W)$, and coincides with 
$\Re(\lambda_\C)$. 
\item The canonical (real) 2-form $d\lambda_\R$ equals $\omega_1$. \qed
\end{enumerate}
\end{prop}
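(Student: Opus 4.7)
The plan is to follow the template of Proposition \ref{prop:canonicalforms}, treating this statement as its real-form counterpart. For part (1), I would first argue directly from definitions that $\lambda_\R([g_u,W])((A,B)^\sharp) = \kappa(A,W)$: the canonical 1-form on any cotangent bundle evaluated on a tangent vector $v$ is the cotangent covector applied to $d\pi(v)$, and here $(A,B)^\sharp$ projects to $\left.\frac{d}{dt}\right|_0 g_u\exp(tA)K_0$, which under the identification recalled in \S\ref{sss:twotwisted} is sent by $[g_u,W]$ to $\kappa(A,W)$.

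To show $\lambda_\R = \Re(\lambda_\C)$, I would pull back via $\Theta:\twist{G}{Q}{\m^-}\to\twist{G_u}{K_0}{\m_u}$. By Lemma \ref{lemma:relatevectorfields}(3), $d\Theta(A,B)^\flat = (A,B)^\sharp$, so $(\Theta^*\lambda_\R)((A,B)^\flat) = \kappa(A,W)$. Meanwhile, from Proposition \ref{prop:canonicalforms}(1), $\lambda_\C((A,B)^\flat) = 2\kappa(A^+,W^-)$. Writing $A = A^+ + \theta A^+$ and $W = W^- + \theta W^-$, using that $\kappa$ vanishes on $\m^+\times\m^+$ and on $\m^-\times\m^-$, and using the identity $\kappa(\theta X,\theta Y)=\overline{\kappa(X,Y)}$ (which comes from $\kappa$ being real-valued on $\g_u$, together with complex bilinearity and the antilinearity of $\theta$), one expands
\[
\kappa(A,W) \;=\; \kappa(A^+,W^-) + \kappa(\theta A^+,\theta W^-) \;=\; 2\,\Re\,\kappa(A^+,W^-) \;=\; \Re\bigl(\lambda_\C((A,B)^\flat)\bigr).
\]
Thus $\Theta^*\lambda_\R = \Re(\lambda_\C)$, and after identification $\lambda_\R = \Re\lambda_\C$.

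Part (2) then follows immediately: since the exterior derivative is $\R$-linear and commutes with taking real parts, and using Proposition \ref{prop:canonicalforms}(2),
\[
d\lambda_\R \;=\; d\,\Re(\lambda_\C) \;=\; \Re(d\lambda_\C) \;=\; \Re(\omega_1 + i\omega_2) \;=\; \omega_1.
\]
There is no real obstacle here; the substantive work (the "heart" vector-field trick to compute $d\lambda_\C$) was already done in the previous proposition. The only point requiring care is the behavior of the Killing form under the antiholomorphic involution $\theta$, which is precisely what allows $2\Re\kappa(A^+,W^-)$ to reassemble into $\kappa(A,W)$.
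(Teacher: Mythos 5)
Your proof is correct and is essentially the argument the paper intends (the paper offers no explicit proof, only the remark that it goes ``in a similar way'' to Proposition \ref{prop:canonicalforms}): part (1) reduces to the compatibility computation $\kappa(A,W)=2\Re\kappa(A^+,W^-)$ already recorded in \S\ref{sss:identifytwotwisted}, and part (2) follows by taking real parts of $d\lambda_\C=\omega_1+i\omega_2$. No gaps.
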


We also regard $\lambda_\C$ and $\lambda_\R$ as $1$-forms on $\O$, via $\Phi$.

\begin{defn}\label{defn:phi}
Define $\phi:\O\rightarrow\R$, $\Ad_{g_u}\Ad_{\exp Z}\Upsilon\mapsto\kappa\left(\Ad_{\exp Z}\Upsilon, \Upsilon\right)$. 
\end{defn}

The following proposition, combined with Proposition \ref{prop:canonicalforms2}, shows that $\phi$ is a K\"ahler potential for $\omega_1$, relative to $J_1$, as proved in \cite{BG2}. 

\begin{prop}\label{prop:potential1}
$\lambda_\R=d_1^c\phi$, where $d_1^c$ denotes $d^c$ with respect to the complex structure $J_1$. 
\end{prop}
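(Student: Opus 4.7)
The plan is to verify the identity $\lambda_\R(v) = -d\phi(J_1 v)$ (the standard convention $d_1^c = -d\circ J_1$) pointwise, working on $\twist{G_u}{K_0}{\m_u}$ via $\Phi$ and evaluating on the ``sharp'' tangent vectors of Section~\ref{ss:tangents}. Since $\phi$, $\lambda_\R$, and $J_1$ are all $G_u$-invariant, it suffices to work at base points of the form $\Phi(\Ad_{\exp Z}\Upsilon) = [e,-iP(Z)]$. Proposition~\ref{prop:canonicalforms2} then gives $\lambda_\R((A,B)^\sharp) = \kappa(A,-iP(Z)) = -i\,\kappa(A,P(Z))$.

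Next I would compute $d\phi((A,B)^\sharp)$ directly. Transferred through $\Phi$, the function becomes $[g_u,W]\mapsto \kappa(\Ad_{\exp P^{-1}(-iW)}\Upsilon,\Upsilon)$ by Lemma~\ref{lemma:describePhi}(4). Along the curve $t\mapsto[\exp(tA),W+tB]$ the $g_u$-factor drops out, leaving only the $W$-dependence. I differentiate using two ingredients: (i) $dP^{-1}_{P(Z)} = E_Z^{-1}\circ S_{JZ}^{-1}$ from Lemma~\ref{lemma:differentialP}; and (ii) the exponential derivative formula $\exp(Z+tY') = \exp(Z)\exp(tE_{Z,l}(Y')) + O(t^2)$ from Section~\ref{ss:EZFZ}. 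Because $F_Z(Y')\in\k$ centralizes $\Upsilon$, the bracket collapses to $\ad_{E_{Z,l}(Y')}\Upsilon = -J\,E_Z(Y')$; the $E_Z$ cancels the $E_Z^{-1}$ from $dP^{-1}$, and after invoking $E_Z(JZ) = J\,P(Z)$ (from the definition of $P$ in Section~\ref{ss:P}), the orthogonality of $\k$ and $\m$ under $\kappa$, and the $J$-invariance of $\kappa$, everything reduces to
\[
d\phi((A,B)^\sharp) \;=\; i\,\kappa\bigl(S_{JZ}^{-1}B,\,P(Z)\bigr).
\]

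Now I apply $J_1$ via Lemma~\ref{lemma:tildesharpJ}: $J_1(A,B)^\sharp = (-S_Z^{-1}S_{JZ}^{-1}B,\,S_{JZ}S_Z A)^\sharp$. The formula above yields $d\phi(J_1(A,B)^\sharp) = i\,\kappa(S_Z A,\,P(Z))$. The decisive simplification is $S_Z\,P(Z) = P(Z)$, which follows from $[Z,P(Z)]=0$; this commutation is established by $K_0$-equivariance of $P$ together with the observation that for $Z=\sum c_\psi x_\psi\in\a_0$ one has $P(Z) = \tfrac12\sum\sinh(2c_\psi)\,x_\psi\in\a_0$, so both elements sit in the abelian subalgebra $\a_0$. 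The $\kappa$-self-adjointness of $S_Z$ (Lemma~\ref{lemma:killingform}(3)) then gives $\kappa(S_Z A,P(Z)) = \kappa(A,S_Z P(Z)) = \kappa(A,P(Z))$, hence $d\phi(J_1(A,B)^\sharp) = i\,\kappa(A,P(Z)) = -\lambda_\R((A,B)^\sharp)$, proving $\lambda_\R = d_1^c\phi$.

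The main technical obstacle is piloting the operators $E_Z, F_Z, S_{JZ}$ through the chain-rule derivation of $d\phi$ without accumulating stray factors; the crux then reduces to the single clean identity $[Z,P(Z)]=0$, which lets the $S_Z$ factor produced by $J_1$ evaporate against $P(Z)$.
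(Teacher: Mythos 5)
Your strategy is the same as the paper's (the paper evaluates on the ``hat'' vectors and you on the ``sharp'' vectors, but the ingredients are identical: Proposition \ref{prop:canonicalforms2} for $\lambda_\R$, the chain rule through $dP^{-1}=E_Z^{-1}S_{JZ}^{-1}$ and $E_{Z,l}$, self-adjointness of $S_Z$, and $S_ZP(Z)=P(Z)$ via $[Z,P(Z)]=0$). However, your computation of $d\phi$ has a sign error, and you then absorb it by adopting the convention $d_1^c=-d\circ J_1$, which is the opposite of the one the paper's own proof uses (there $(d_1^c\phi)(v)=(J_1v)(\phi)=d\phi(J_1v)$, with no minus sign). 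Concretely: writing $\dot Z(0)=iE_Z^{-1}S_{JZ}^{-1}B$, the derivative of $\phi$ along your curve is $\kappa\bigl(E_{Z,r}\dot Z(0),[\Ad_{\exp Z}\Upsilon,\Upsilon]\bigr)$, and $[\Ad_{\exp Z}\Upsilon,\Upsilon]=-\ad_\Upsilon\Ad_{\exp Z}\Upsilon=-P(Z)$; dropping the $\k$-valued $F_Z$ term gives
$$d\phi\bigl((A,B)^\sharp\bigr)=\kappa\bigl(iS_{JZ}^{-1}B,\,-P(Z)\bigr)=-i\,\kappa\bigl(S_{JZ}^{-1}B,\,P(Z)\bigr),$$
not $+i\,\kappa(S_{JZ}^{-1}B,P(Z))$. (Equivalently, if you push $\Ad_{\exp Z}$ onto the second slot you must pair against $\Ad_{\exp(-Z)}\Upsilon$, whose $\m$-part is $+E_Z(JZ)=JP(Z)$, not against $\Ad_{\exp Z}\Upsilon$.) A quick $SL(2)$ check confirms the sign: with $Z=cx$, $B=ix$, one has $\phi=-2\cosh 2c$ along the curve and $\dot\phi(0)=+4\tanh 2c$, which matches $-i\kappa(S_{JZ}^{-1}B,P(Z))$ and contradicts your formula.

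With the corrected sign, your own chain of reductions gives $d\phi\bigl(J_1(A,B)^\sharp\bigr)=-i\,\kappa(S_ZA,P(Z))=-i\,\kappa(A,P(Z))=\kappa(A,-iP(Z))=\lambda_\R\bigl((A,B)^\sharp\bigr)$, i.e.\ $\lambda_\R=d\phi\circ J_1$ --- which is exactly the identity the paper proves, and which requires the convention $d_1^c=+\,d\circ J_1$. So your displayed conclusion $\lambda_\R(v)=-d\phi(J_1v)$ is false as stated; your proof only appears to reach the proposition because the sign error in $d\phi$ cancels against your choice of sign convention for $d^c$. Everything else --- the reduction to the $W$-slot, the cancellation of $E_Z$ against $E_Z^{-1}$, the identity $S_ZP(Z)=P(Z)$ from $Z,P(Z)\in\a_0$ --- is correct and matches the paper.
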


\begin{proof}
Let $A,B\in\m_u$.  We compare $\lambda_{\R}(A,B)\hat{}$ with $(d_1^c\phi)(A,B)\hat{}$. 

On the one hand, by Lemma \ref{lemma:relatevectorfields}, $(A,B)\hat{}$ at $\Ad_{g_u}\Ad_{\exp Z}\Upsilon$ corresponds under $\Phi$ to $(A,S_{JZ}E_ZB)^\sharp$ at $[g_u,-iP(Z)]$, which by Proposition \ref{prop:canonicalforms2}(1), is sent by $\lambda_\R$ to $\kappa(A,-iP(Z))$.  

On the other hand, by Lemma \ref{lemma:hatalmostcomplexstructures}(1), 
\begin{eqnarray*}(d_1^c\phi)(A,B)\hat{}&=&\left(J_1(A,B)\right)\hat{}(\phi)\\
&=&\left.\frac{d}{dt}\right|_0\kappa\left(\Ad_{\exp(Z+tiE_Z^{-1}S_ZA)}\Upsilon,
\Upsilon\right)\\&=&\left.\frac{d}{dt}\right|_0\kappa\left(\Ad_{\exp(tiE_{Z,r}E_Z^{-1}S_ZA)}\Ad_{\exp Z}\Upsilon,\Upsilon\right)\\&=&i\kappa\left([E_{Z,r}E_Z^{-1}S_ZA,\Ad_{\exp Z}\Upsilon],\Upsilon\right)=i\kappa\left(E_{Z,r}E_Z^{-1}S_ZA,[\Ad_{\exp Z}\Upsilon,\Upsilon]\right)\\&=&\kappa\left(E_{Z,r}E_Z^{-1}S_ZA,-iP(Z)\right),
\end{eqnarray*}
the last equality following from the definition of $P(Z)$.  
Since $\kappa(\k,\m)=0$, we may replace $E_{Z,r}$ with $E_Z$, yielding $\kappa\left(S_ZA,-iP(Z)\right)$.  By Lemma \ref{lemma:killingform}(3), this equals
$\kappa\left(A,-iS_ZP(Z)\right)$.  Taking $\a_0$ so that $Z\in\a_0$, we have that $P(Z)\in\a_0$ (Section \ref{ss:P}), and it follows that $\kappa\left(A,-iS_ZP(Z)\right)=\kappa\left(A,-iP(Z)\right)$ by Section \ref{s:eigen}. 
\end{proof}

In fact, $\phi$ is  related to another K\"ahler potential, viewed in the setting of $\twist{G_u}{K_0}{\m_u}$:

\begin{defn}\label{defn:phi'}
 Define $\phi':\twist{G_u}{K_0}{\m_u}\rightarrow\R$, 
$[g_u,W]\mapsto 2\kappa(P(\frac 12 P^{-1}(iW)), P(\frac 12 P^{-1}(iW)))$.  
\end{defn}

\begin{prop}\label{prop:potential2}
$\phi'$ is a K\"ahler potential for $\omega_1$, relative to $J_1$. 
\end{prop}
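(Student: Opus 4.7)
The plan is to reduce Proposition \ref{prop:potential2} to Proposition \ref{prop:potential1} by establishing the pointwise identity
\[
\phi + \phi'\circ\Phi \;=\; \kappa(\Upsilon,\Upsilon)
\]
on $\O$.  Since $\Phi$ is a $J_1$-biholomorphism between $\O$ and $\twist{G_u}{K_0}{\m_u}$ (Definition \ref{defn:complexstructures}) and $dd_1^c$ annihilates constants and commutes with pullback along a $J_1$-holomorphic map, this identity together with Proposition \ref{prop:potential1} immediately yields the K\"ahler-potential property of $\phi'$.

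First I would reduce the verification to a one-parameter family.  Both $\phi$ and $\phi'\circ\Phi$ are $G_u$-invariant: the former obviously, the latter because $\phi'$ is $G_u$-invariant by construction.  Since $K_0$ fixes $\Upsilon$ under $\Ad$ and the map $P$ is $K$-equivariant (see \S\ref{ss:P}), both functions are unchanged when $Z$ is replaced by $\Ad_k Z$ for $k\in K_0$; combined with the fact that every element of $\m_0$ is $K_0$-conjugate to an element of $\a_0$, it suffices to verify the displayed identity at points of the form $\Ad_{\exp Z}\Upsilon$ with $Z = \sum_{\psi\in\Psi} c_\psi x_\psi\in\a_0$.

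For such $Z$, Lemma \ref{lemma:decomposition}(3) directly yields $\phi(\Ad_{\exp Z}\Upsilon) = \kappa(\Upsilon,\Upsilon) + \sum_\psi(\cosh(2c_\psi)-1)\kappa(\Upsilon_\psi,\Upsilon)$.  On the other hand, by Lemma \ref{lemma:describePhi}(2), $\Phi(\Ad_{\exp Z}\Upsilon) = [e,-iP(Z)]$, and applying the explicit formula $P(\sum a_\psi x_\psi) = \tfrac 12\sum\sinh(2a_\psi)x_\psi$ from \S\ref{ss:P} to $Z/2$ produces $P(Z/2) = \tfrac 12\sum_\psi\sinh(c_\psi)x_\psi$; together with the strong orthogonality of $\{x_\psi\}$ this gives
\[
(\phi'\circ\Phi)(\Ad_{\exp Z}\Upsilon) \;=\; 2\kappa(P(Z/2),P(Z/2)) \;=\; \tfrac 12\sum_\psi\sinh^2(c_\psi)\,\kappa(x_\psi,x_\psi).
\]

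Comparing the two expressions and invoking the double-angle formula $\cosh(2c)-1 = 2\sinh^2(c)$, the identity reduces to the Killing-form equality $\kappa(x_\psi,x_\psi) = -4\kappa(\Upsilon_\psi,\Upsilon)$ for each $\psi\in\Psi$.  Since $x_\psi = e_\psi+f_\psi$ and $\kappa$ vanishes on $\m^+\times\m^+$ and on $\m^-\times\m^-$, $\kappa(x_\psi,x_\psi) = 2\kappa(e_\psi,f_\psi)$; on the other hand, $\Upsilon_\psi = \tfrac i2 h_\psi = \tfrac i2[e_\psi,f_\psi]$, and associativity of $\kappa$ combined with $\ad_\Upsilon f_\psi = -if_\psi$ (because $f_\psi\in\m^-$) yields $\kappa(\Upsilon_\psi,\Upsilon) = \tfrac i2\kappa(e_\psi,[f_\psi,\Upsilon]) = -\tfrac 12\kappa(e_\psi,f_\psi)$, confirming the equality.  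This Killing-form bridge is the only nonformal computational step, and constitutes the main obstacle in the plan; the rest is routine bookkeeping within the machinery developed in \S\ref{s:computation} and \S\ref{s:geometry}.
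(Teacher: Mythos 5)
Your computational route is genuinely different from the paper's: you reduce to $\a_0$ by $K_0$-equivariance and evaluate both functions explicitly via Lemma \ref{lemma:decomposition} and $P(\sum a_\psi x_\psi)=\tfrac12\sum\sinh(2a_\psi)x_\psi$, whereas the paper writes $\phi=\kappa(\Ad_{\exp Z/2}\Upsilon,\Ad_{\exp(-Z/2)}\Upsilon)$, splits $\Ad_{\exp Z/2}\Upsilon=X+Y$ into its $\k_0$- and $\m_0$-parts, and uses $\kappa(X,X)=\tfrac12\kappa(\Upsilon,\Upsilon)+\tfrac12\phi$ to get $\phi=-2\kappa(Y,Y)+\kappa(\Upsilon,\Upsilon)$. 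Every individual step of your calculation checks out, including the reduction to $\a_0$ and the identity $\kappa(x_\psi,x_\psi)=-4\kappa(\Upsilon_\psi,\Upsilon)$.

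The gap is the final inference. The identity you prove is $\phi'\circ\Phi=\kappa(\Upsilon,\Upsilon)-\phi$, and from this $d(d_1^c(\phi'\circ\Phi))=-d(d_1^c\phi)=-\omega_1$: a constant \emph{sum} makes $\phi'$ a potential for $-\omega_1$, not $\omega_1$. What the proposition needs is that $\phi'\circ\Phi-\phi$ be pluriharmonic, and the paper's proof accordingly arrives at the \emph{difference} relation $\phi=\phi'\circ\Phi+\kappa(\Upsilon,\Upsilon)$. The two conclusions cannot both hold, and the discrepancy sits exactly at the paper's claim $Y=-iP(\tfrac12P^{-1}(iW))$: your explicit formulas give $Y=\proj{\g_0}{\m_0}\Ad_{\exp Z/2}\Upsilon=-\tfrac12\sum\sinh(c_\psi)y_\psi=-JP(Z/2)\in\m_0$ rather than $-iP(Z/2)\in\m_u$, and since $\kappa(JA,JA)=\kappa(A,A)$ while $\kappa(iA,iA)=-\kappa(A,A)$, this flips the sign of $\kappa(Y,Y)$. (For $SL(2)$ with $Z=cx_\psi$ one gets $\phi=-2\cosh 2c$ and $\phi'\circ\Phi=4\sinh^2 c$ up to a common normalization of $\kappa$, whose sum, not difference, is constant --- so your identity is the numerically correct one.) As written, then, your argument does not establish the proposition; it shows instead that the sign in the definition of $\phi'$, or in the statement, must be adjusted. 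You need to confront this sign explicitly rather than assert that the identity ``immediately yields'' the conclusion.
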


\begin{proof}

Beginning with the definition of $\phi$, we have 
$$\begin{aligned} \phi(\Ad_{g_u}\Ad_{\exp Z}\Upsilon)&=\kappa(\Ad_{\exp Z}\Upsilon,\Upsilon)=\kappa(\Ad_{\exp Z/2}\Upsilon,\Ad_{\exp -Z/2}\Upsilon)\\&=\kappa(X+Y,X-Y)=\kappa(X,X)-\kappa(Y,Y)\end{aligned}$$
where
$$\begin{aligned} X&=\text{pr}^{\g}_{\k}\Ad_{\exp Z/2}\Upsilon=\frac 12\left(\Ad_{\exp Z/2}+\Ad_{\exp -Z/2}\right)\Upsilon\in\k_0\\
Y&=\text{pr}^{\g}_{\m}\Ad_{\exp Z/2}\Upsilon=\frac 12\left(\Ad_{\exp Z/2}-\Ad_{\exp -Z/2}\right)\Upsilon\in\m_0\end{aligned}$$

But one computes easily that $\kappa(X,X)=\frac 12\kappa(\Upsilon,\Upsilon)+\frac 12\phi(\Ad_{g_u}\Ad_{\exp Z}\Upsilon)$.  Combining with the first equation in the proof, one concludes that $\phi(\Ad_{g_u}\Ad_{\exp Z}\Upsilon)
=-2\kappa(Y,Y)+\kappa(\Upsilon,\Upsilon)$.  

Recall that $Z=P^{-1}(iW)$.   We claim that $Y=-iP(\frac 12 P^{-1}(iW))$.  This is equivalent to $\text{pr}^{\g}_{\m}\Ad_{\exp Z/2}\Upsilon=-iP(\frac 12 Z)$, which follows directly from the definition of $P$.

Finally, $\phi(\Ad_{g_u}\Ad_{\exp Z}\Upsilon)=2\kappa(P(\frac 12 P^{-1}(iW)), P(\frac 12 P^{-1}(iW)))+\kappa(\Upsilon,\Upsilon)$, so $\phi$ and $\phi'$ differ by a constant.  
\end{proof}

\section{Moment maps}\label{s:moment} 

We give moment maps for the $G_u$-action on $\twist{G_u}{K_0}{\m_u}$ (Proposition \ref{prop:moment1}) and on $\O$ (Proposition \ref{prop:moment2}, relative to the symplectic structures $\omega_1$, $\omega_2$, and $\omega_3$.    The definition of a moment map is required in the proofs in \S\ref{s:proofs}, but the formulae for $\mu_1$, $\mu_2$, and $\mu_3$ derived in this section are not needed there.

\begin{defn}\label{defn:momentmap}
Let $(M,\omega)$ be a symplectic manifold and suppose that $H$ is a simple Lie group that acts symplectically on $M$.  A {\em moment map} is an $H$-equivariant map $\mu:M\rightarrow \h$ with the property that $$\kappa\left(d\mu_p(\chi_p),X\right)=\omega\left(\chi_p,\eta_X\right)$$ for all $p\in M$,  for all vectors $\chi_p$ tangent to $M$ at $p$, and for all $X\in\h$.  Here $\eta_X$ denotes the tangent vector (at $p$) induced by $X$ via the action of $H$.\footnote{It is customary to define moment maps as having target $\mathfrak h^*$, but if $H$ is simple, our definition is equivalent.} 
\end{defn}

\begin{prop}\label{prop:moment1} 
On $\twist{G_u}{K_0}{\m_u}$, with respect to the symplectic forms
$\omega_1$, $\omega_2$, and $\omega_3$, we have moment maps
$\mu_i:\twist{G_u}{K_0}{\m_u}\rightarrow\g_u$, given by 
\begin{eqnarray*}
\mu_1\left([g_u,W]\right)&=&\Ad_{g_u}W\\
\mu_2\left([g_u,W]\right)&=&\Ad_{g_u}JW\\
\mu_3\left([g_u,W]\right)&=&\frac 12\Ad_{g_u}\left(\Ad_{\exp\left(P^{-1}(iW)\right)}+
  \Ad_{\exp\left(-P^{-1}(iW)\right)} \right)\Upsilon
\end{eqnarray*}
\end{prop}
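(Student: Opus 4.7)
The plan is to verify each moment map formula separately; $G_u$-equivariance is immediate from the formulas (all three begin with $\Ad_{g_u}$), so the real work is checking the defining identity $\kappa(d\mu_i(\chi), X) = \omega_i(\chi, \eta_X)$ at a point $[g_u, W] \in \twist{G_u}{K_0}{\m_u}$. For fixed $X \in \g_u$, I first compute the induced vector field: writing $\exp(tX)g_u = g_u \exp(t\Ad_{g_u^{-1}}X)$ and decomposing $\Ad_{g_u^{-1}}X = X_k + X_m$ according to $\g_u = \k_0 \oplus \m_u$, the $K_0$-part gets absorbed across the twisted product, yielding $\eta_X = (X_m, [X_k, W])^\sharp$.

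For $\mu_1$, I would invoke Proposition \ref{prop:canonicalforms2}: $\omega_1 = d\lambda_\R$ on the cotangent bundle. The standard cotangent-lift formula then gives $\mu_1([g_u, W])(X) = \lambda_\R(\eta_X) = \kappa(X_m, W) = \kappa(X, \Ad_{g_u}W)$, where the last equality uses $\Ad$-invariance and $\kappa(\k_0, \m_u) = 0$. For $\mu_2$, I would check directly using the $\sharp$-form of $\omega_2$ in Proposition \ref{prop:metricinsharp}, namely $-\kappa(A, JB') + \kappa(JB, A')$; substituting $\eta_X$ and using $J[X_k, W] = [X_k, JW]$ (valid since $[\Upsilon, X_k] = 0$ because $\Upsilon$ is central in $\k$) together with $\Ad$-invariance, the right side simplifies to $\kappa([X_k, A], JW) + \kappa(JB, X_m)$. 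On the left side, differentiating $\mu_2$ along $[g_u\exp(tA), W + tB]$ yields $d\mu_2(\chi) = \Ad_{g_u}(JB + [A, JW])$, and pairing with $X$ (using $[\m_u, \m_u] \subset \k_0$ and the $\m$-$\k$ orthogonality of $\kappa$) produces the same expression.

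For $\mu_3$, I would use the hyperk\"ahler construction itself. By the definition employed in the proof of Proposition \ref{prop:existence}, the form $-\omega_3 + i\omega_2$ on $\O$ is the KKS form of the adjoint orbit, whose moment map for the holomorphic $G$-action is (up to a sign governed by the paper's convention) the inclusion $\O \hookrightarrow \g$. Restricting to $X \in \g_u$ and extracting the real part in $\g = \g_u \oplus i\g_u$, the $G_u$-moment map for $\omega_3$ on $\O$ is the projection $p \mapsto p_{\g_u}$. By Lemma \ref{lemma:describePhi}(4), $\Phi^{-1}([g_u,W]) = \Ad_{g_u}\Ad_{\exp P^{-1}(-iW)}\Upsilon$, and since for $Z \in \m_0$ we have $\Ad_{\exp Z}\Upsilon \in \g_0 = \k_0 \oplus \m_0$ with $\k_0 \subset \g_u$ and $\m_0 \subset i\g_u$, the $\g_u$-part equals $\Ad_{g_u}$ applied to the $\k_0$-part of $\Ad_{\exp Z}\Upsilon$; by Lemma \ref{lemma:decomposition} this is $\tfrac{1}{2}(\Ad_{\exp Z} + \Ad_{\exp(-Z)})\Upsilon$, and using the oddness of $P$ (Lemma \ref{lemma:PandJcommute}(3)) to substitute $Z = -P^{-1}(iW)$ yields the displayed formula. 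The main obstacle is the $\mu_3$ case: one must carefully reconcile sign conventions among the paper's $\omega_2$, $\omega_3$, the KKS form, and the moment-map definition, and track real/imaginary parts of $\kappa$ under $\g = \g_u \oplus i\g_u$; once this bookkeeping is done, the argument is entirely formal.
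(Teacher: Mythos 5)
Your proposal is correct, and it diverges from the paper's proof in two of the three cases. For $\mu_2$ you do essentially what the paper does: compute $\eta_X$ in sharp coordinates, differentiate the putative formula, and match both sides against the explicit expression for $\omega_2$ in Proposition \ref{prop:metricinsharp} using associativity of $\kappa$ and orthogonality of $\k_0$ and $\m_u$. For $\mu_1$, however, the paper runs the same direct verification again, whereas you invoke Proposition \ref{prop:canonicalforms2} and the standard cotangent-lift identity $\mu_X=\lambda_\R(\eta_X)$; this is cleaner (and the sign does match the paper's convention $\kappa(d\mu(\chi),X)=\omega(\chi,\eta_X)$, since $d(\iota_{\eta_X}\lambda_\R)=-\iota_{\eta_X}\omega_1$ when the lift preserves $\lambda_\R$), at the modest cost of importing the general fact about lifted actions. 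The biggest difference is $\mu_3$: the paper defers it to the end of the proof of Proposition \ref{prop:moment2}, where the formula $\mu_3(\Ad_{g_u}\Ad_{\exp Z}\Upsilon)=\frac12\Ad_{g_u}(\Ad_{\exp Z}+\Ad_{\exp(-Z)})\Upsilon$ is verified by brute force on hat tangent vectors, reducing to the two operator identities supplied by Lemma \ref{lemma:identzero}(2) and Lemma \ref{lemma:oddpart}; it then transfers to $\twist{G_u}{K_0}{\m_u}$ via Lemmas \ref{lemma:describePhi}(4) and \ref{lemma:PandJcommute}(3), exactly as you do. You instead take the abstract fact that the holomorphic KKS moment map on an adjoint orbit is the inclusion and extract the $\g_u$-part. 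The bookkeeping you flag as the main obstacle does close: for $X\in\g_u$ and $V=V_u+iV_u'$ one has $\Re\kappa(V,X)=\kappa(\Re V,X)$, and since $\Re\omega_{\mathrm{KKS}}=-\omega_3$ while $\Re\kappa([\xi,p],X)=-\Re\kappa(p,[\xi,X])$, the two minus signs cancel and $\mu_3=\Re\circ\iota$ satisfies the paper's convention with no stray sign. Your route buys a conceptual, computation-free argument (at the price of assuming the standard KKS moment-map fact, which the paper never states abstractly); the paper's route is self-contained within its operator calculus but requires the auxiliary identities of Lemmas \ref{lemma:identzero} and \ref{lemma:oddpart}.
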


\begin{proof}
With the putative formula for $\mu_1$ and $A,B\in\m_u$, at $[g_u,W]$,  we have  that $d\mu_1\left(A,B\right)^\sharp{}=\Ad_{g_u}B+\Ad_{g_u}[A,W]$.  Let $X\in\g_u$; it is easy to see that at $[g_u,W]$, we have 
$\eta_{\Ad_{g_u}X}=\left(\proj{\g_u}{\m_u}X,[\proj{\g_u}{\k_0}X,W]\right)^\sharp{}$.  
We must verify the equation $\kappa\left(\Ad_{g_u}B+\Ad_{g_u}[A,W],\Ad_{g_u}X\right)=\omega_1\left((A,B)^\sharp{},(\proj{\g_u}{\m_u}X,[\proj{\g_u}{\k_0}X,W])^\sharp{}\right)$.  Using the fact that $\k$ and $\m$ are orthogonal under $\kappa$, the left side reduces to $\kappa(B,\proj{\g_u}{\m_u}X)+\kappa\left([A,W],\proj{\g_u}{\k_0}X\right)$.  So does the right side, using Proposition \ref{prop:metricinsharp} and the associativity of the Killing form.  The proof for $\mu_2$ is similar. 
The proof for $\mu_3$ will be given at the end of the proof of Proposition \ref{prop:moment2}.  
\end{proof}

\begin{cor}\label{cor:moment1}
The $\g$-valued, $G$-equivariant, $J_3$-holomorphic moment map on $\twist{G}{Q}{\m^-}$ relative to the holomorphic symplectic form $\omega_1+i\omega_2$, is given by 
$(\mu_1+i\mu_2)\left([g,W^-]\right)=2\Ad_gW^-$.
\end{cor}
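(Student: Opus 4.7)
The strategy is to use the diffeomorphism $\Theta:\twist{G}{Q}{\m^-}\to\twist{G_u}{K_0}{\m_u}$ from \S\ref{sss:identifytwotwisted} to pull the formulas of Proposition \ref{prop:moment1} back to $\twist{G}{Q}{\m^-}$ and then form $\mu_1+i\mu_2$.  Since the forms $\omega_1,\omega_2$ on $\twist{G}{Q}{\m^-}$ were defined precisely so that $\Theta$ is an isomorphism of $G_u$-symplectic manifolds for each of these forms, the pullbacks of $\mu_1,\mu_2$ remain moment maps for the $G_u$-action.

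The central computation is at a point $[g_u,W^-]$.  Its image under $\Theta$ is $[g_u,W]$ with $W=(I+\theta)W^-=W^-+\theta W^-$, where $\theta W^-\in\m^+$.  Because $J=\ad_\Upsilon$ acts as $+i$ on $\m^+$ and as $-i$ on $\m^-$, we get $JW=i(\theta W^- - W^-)$, hence
\begin{equation*}
  (\mu_1+i\mu_2)\bigl([g_u,W^-]\bigr)=\Ad_{g_u}(W+iJW)=\Ad_{g_u}\bigl((W^-+\theta W^-)+(W^- - \theta W^-)\bigr)=2\Ad_{g_u}W^-.
\end{equation*}
For a general $[g,W^-]$, use the Iwasawa decomposition $g=g_uq$ to write $[g,W^-]=[g_u,\Ad_q W^-]$; the identity $\Ad_{g_u}\Ad_q=\Ad_g$ then yields $(\mu_1+i\mu_2)([g,W^-])=2\Ad_g W^-$.

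It remains to verify the three properties in the statement.  $G$-equivariance is immediate from $g'\cdot[g,W^-]=[g'g,W^-]$; $J_3$-holomorphicity is visible since the $G$-action on $\twist{G}{Q}{\m^-}$ is $J_3$-holomorphic (\S\ref{sss:twotwisted}) and the fiberwise dependence on $W^-$ is $\C$-linear.  For the moment-map property against the holomorphic symplectic form $\omega_1+i\omega_2$, one pairs with $X=X_1+iX_2\in\g_u\oplus i\g_u$ and uses holomorphicity of the $G$-action (so $\eta_{iX_2}=J_3\eta_{X_2}$) together with the identity $(\omega_1+i\omega_2)(J_3u,\cdot)=i(\omega_1+i\omega_2)(u,\cdot)$ to reduce to the already established $\g_u$-valued moment-map conditions for $\mu_1$ and $\mu_2$.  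This last complexification bookkeeping is the only step requiring a little care; it is essentially formal given Proposition \ref{prop:moment1} and the holomorphicity of $\omega_1+i\omega_2$ established in Proposition \ref{prop:canonicalforms}.
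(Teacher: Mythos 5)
Your proposal is correct and follows essentially the same route as the paper: transport the point via $\Theta$ to $\twist{G_u}{K_0}{\m_u}$, apply Proposition \ref{prop:moment1}, and observe that $(I+iJ)(I+\theta)$ acts as multiplication by $2$ on $\m^-$ (using $\Ad_qW^-\in\m^-$). The paper does the computation directly at $[g,W^-]=[g_u,\Ad_qW^-]$ rather than first at $[g_u,W^-]$ and leaves the equivariance and holomorphicity remarks implicit, but these are only presentational differences.
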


\begin{proof}  Let $g=g_uq\in G$, with $g_u\in G_u$ and $q\in Q$. Then $[g,W^-]$ is mapped by 
$\Theta$ to $[g_u,(I+\theta)(\Ad_q(W^-))]$, which (Proposition \ref{prop:moment1}) is sent by $\mu_1+i\mu_2$ to 
\begin{eqnarray*}\Ad_{g_u}\left((I+iJ)(I+\theta)(\Ad_q(W^-))\right)
&=&\Ad_{g_u}\left(\right(I+iJ+\theta+iJ\theta)(\Ad_qW^-))\\
&=&\Ad_{g_u}\left(\right(I+iJ+\theta-\theta iJ)(\Ad_qW^-))\\
&=&2\Ad_{g_u}\Ad_qW^-=2\Ad_gW^-,
\end{eqnarray*}
using that $\Ad_qW^-\in\m^-$.
\end{proof}

\begin{prop}
\label{prop:moment2}
On $\O$, with respect to the symplectic forms $\omega_1$, $\omega_2$, and $\omega_3$, we have moment maps $\mu_i:\O\rightarrow\g_u$, given by 
\begin{eqnarray*}
\mu_1\left(\Ad_{g_u}\Ad_{\exp Z}\Upsilon\right)&=&\Im\left(\Ad_{g_u}\Ad_{\exp (JZ)}\Upsilon\right)\\
\mu_2\left(\Ad_{g_u}\Ad_{\exp Z}\Upsilon\right)&=&-\Im\left(\Ad_{g_u}\Ad_{\exp Z}\Upsilon\right),\text{ or otherwise said, }\\
 \mu_2\left(\Ad_g\Upsilon\right)&=&-\Im\left(\Ad_g\Upsilon\right)\\
 \mu_3\left(\Ad_{g_u}\Ad_{\exp Z}\Upsilon\right)&=&\Re\left(\Ad_g\Upsilon\right)
\end{eqnarray*}
\end{prop}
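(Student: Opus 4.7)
The plan is to handle $\mu_2$ and $\mu_3$ together via Kirillov--Kostant--Souriau theory, and to obtain $\mu_1$ by transporting the cotangent-bundle moment map of Proposition \ref{prop:moment1} across $\Phi$.

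For $\mu_2$ and $\mu_3$, recall from the proof of Proposition \ref{prop:existence} that $\Omega := i(\omega_2+i\omega_3)=i\omega_2-\omega_3$ is by construction the KKS form on $\O$. At any $\xi\in\O$, a tangent vector $v$ has the form $v=[Y,\xi]$ for some $Y\in\g$, and the $G$-induced vector field for $X\in\g$ is $\eta_X(\xi)=[X,\xi]$. Associativity of the Killing form then gives $\Omega(v,\eta_X)=\kappa(\xi,[Y,X])=-\kappa(v,X)$. Specializing to $X\in\g_u$, $\kappa$ is real on $\g_u$, so $\Re\kappa(v,X)=\kappa(\Re v,X)$ and $\Im\kappa(v,X)=\kappa(\Im v,X)$. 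Taking real and imaginary parts of the above identity, and using $\omega_3=-\Re\Omega$ and $\omega_2=\Im\Omega$, yields $\omega_3(v,\eta_X)=\kappa(\Re v,X)$ and $\omega_2(v,\eta_X)=\kappa(-\Im v,X)$. With the convention of Definition \ref{defn:momentmap} this gives exactly $\mu_3(\xi)=\Re\xi$ and $\mu_2(\xi)=-\Im\xi$.

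For $\mu_1$, I would first verify that $\Phi\colon\O\to\twist{G_u}{K_0}{\m_u}$ is a $G_u$-equivariant symplectomorphism for $\omega_1$: equivariance is immediate from the definition of $\Phi$, and the symplectomorphism property is a direct comparison of the two $\omega_1$ formulas in Propositions \ref{prop:metricinhat} and \ref{prop:metricinsharp} via the tangent-vector relation $d\Phi(A,B)\hat{}=(A,S_{JZ}E_Z B)^{\sharp}$ from Lemma \ref{lemma:relatevectorfields}(2). Pulling back the moment map $\mu_1([g_u,W])=\Ad_{g_u}W$ (the part of Proposition \ref{prop:moment1} proved independently of $\mu_3$) along $\Phi(\Ad_{g_u}\Ad_{\exp Z}\Upsilon)=[g_u,-iP(Z)]$ then produces $\mu_1(\Ad_{g_u}\Ad_{\exp Z}\Upsilon)=-i\Ad_{g_u}P(Z)$. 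To match this with the claimed $\Im(\Ad_{g_u}\Ad_{\exp(JZ)}\Upsilon)$, note that $\Ad_{g_u}$ preserves $\g=\g_u\oplus i\g_u$ and so commutes with $\Im$, reducing the task to showing $-iP(Z)=\Im\Ad_{\exp(JZ)}\Upsilon$. By Lemma \ref{lemma:decomposition}, the $\k_0$-part of $\Ad_{\exp(JZ)}\Upsilon$ lies in $\g_u$ and contributes nothing to $\Im$, while the $\m_0$-part is $-E_{JZ}(J(JZ))=E_{JZ}(Z)$. Since $\m_0=i\m_u$, $\Im$ acts on $\m_0$ as multiplication by $-i$, and Lemma \ref{lemma:order2}(2) combined with the formula $P(Z)=-JE_Z(JZ)=E_{JZ}Z$ recorded in \S\ref{ss:P} gives $E_{JZ}(Z)=P(Z)$. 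Hence $\Im\Ad_{\exp(JZ)}\Upsilon=-iP(Z)$, as required.

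The main obstacle is this final identification for $\mu_1$, which threads together Lemma \ref{lemma:decomposition}, Lemma \ref{lemma:order2}(2), and the definition of $P$, and requires correctly interpreting $\Im$ restricted to $\m_0\subset\g_0\subset\g$. The KKS arguments for $\mu_2$ and $\mu_3$ are essentially formal once the sign convention in Definition \ref{defn:momentmap} is tracked.
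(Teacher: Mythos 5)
Your proof is correct, and for $\mu_2$ and $\mu_3$ it takes a genuinely different route from the paper. The paper obtains $\mu_1$ and $\mu_2$ exactly as you do for $\mu_1$ — by transporting the cotangent-bundle moment maps of Proposition \ref{prop:moment1} across $\Phi$ and then identifying $-i\Ad_{g_u}P(Z)$ with the stated real/imaginary parts — but it proves the $\mu_3$ formula by a direct verification of Definition \ref{defn:momentmap} in the `hat' coordinates: it computes $d\mu_3(A,B)\hat{}$ and $\eta_{\Ad_{g_u}X}$ explicitly, evaluates $\omega_3$ via Proposition \ref{prop:metricinhat}, and reduces the matching of terms to the operator identities of Lemma \ref{lemma:identzero}(2) and Lemma \ref{lemma:oddpart}. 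Your KKS argument replaces all of that: since $\O$ is a single $G$-orbit, every tangent vector is $[Y,\xi]$, associativity of $\kappa$ gives $\Omega(v,\eta_X)=-\kappa(v,X)$, and taking real and imaginary parts against $X\in\g_u$ delivers $\mu_3=\Re$ and $\mu_2=-\Im$ simultaneously — in effect you prove Corollary \ref{cor:moment2} first and deduce the proposition from it, reversing the paper's logical order. Your route is shorter and more conceptual, avoiding the power-series identities entirely; what the paper's computation buys is the final transfer of the $\mu_3$ formula back to $\twist{G_u}{K_0}{\m_u}$ (via Lemmas \ref{lemma:describePhi}(4) and \ref{lemma:PandJcommute}(3)), which is needed to finish the deferred $\mu_3$ case of Proposition \ref{prop:moment1} — a step your argument would still owe if you wanted that proposition in full. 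Your identification $\Im\left(\Ad_{\exp(JZ)}\Upsilon\right)=-iP(Z)$ via Lemma \ref{lemma:decomposition} is carried out correctly, and the "verification" that $\Phi$ is a symplectomorphism is in fact automatic, since the sharp-coordinate formulas of Proposition \ref{prop:metricinsharp} are by construction the same forms written in $d\Phi$-transported tangent vectors.
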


\begin{proof}
We first consider $\mu_2$.  We note that $\Ad_{g_u}\Ad_{\exp Z}\Upsilon$ is sent by $\Phi$ to $[g_u,-i\,P(Z)]$; Proposition \ref{prop:moment1} tells us that this is sent by $\mu_2$ to $-i\,\Ad_{g_u}JP(Z)=i\Ad_{g_u}\proj{\g}{i\g_u}\left(\Ad_{\exp Z}\Upsilon\right)
=-\Im\left(\Ad_{g_u}\Ad_{\exp Z}\Upsilon\right).$  The expression for $\mu_1$ is obtained in a similar way.   It remains to consider $\mu_3$.

With the putative formula  $\mu_3\left(\Ad_{g_u}\Ad_{\exp Z}\Upsilon\right)=\frac 12\Ad_{g_u}\left(\Ad_{\exp Z}+\Ad_{\exp(-Z)}\right)\Upsilon$, we have easily that 
$$\begin{aligned}d\mu_3\left((A,B)\hat{}\right)&=\frac 12\Ad_{g_u}[A,\left(\Ad_{\exp Z}+\Ad_{\exp(-Z)}\right)\Upsilon]\\&+\frac 12\Ad_{g_u}\left(\Ad_{\exp Z}[E_{Z}iB,\Upsilon]+\Ad_{\exp(-Z)}[E_{(-Z)}(-iB),\Upsilon]\right)\end{aligned}$$
which simplifies to 
$\frac 12\Ad_{g_u}[A,\left(\Ad_{\exp Z}+\Ad_{\exp(-Z)}\right)\Upsilon]-\frac i2\Ad_{g_u}\left(\Ad_{\exp Z}-\Ad_{\exp(-Z)}\right)JE_ZB$. 
It follows that for $X\in\g_u$, 
\begin{eqnarray*}
\kappa\left(d\mu_m(\chi_m),\Ad_{g_u}X\right)&=&\kappa\left(\frac 12\Ad_{g_u}[A,\left(\Ad_{\exp Z}+\Ad_{\exp(-Z)}\right)\Upsilon], \Ad_{g_u}X\right)\\&-&\kappa\left(\frac i2\Ad_{g_u}\left(\Ad_{\exp Z}-\Ad_{\exp(-Z)}\right)JE_ZB, \Ad_{g_u}X\right)\\&=&
\kappa\left(\frac 12[A,\left(\Ad_{\exp Z}+\Ad_{\exp(-Z)}\right)\Upsilon]-\frac i2\left(\Ad_{\exp Z}-\Ad_{\exp(-Z)}\right)JE_ZB, X\right)\\
\end{eqnarray*}

On the other hand, from 
\begin{eqnarray*}
\Ad_{\exp(t\Ad_{g_u}X)}\Ad_{g_u}\Ad_{\exp Z}\Upsilon
&=&\Ad_{g_u}\Ad_{\exp(tX)}\Ad_{\exp Z}\Upsilon\\
&\sim&\Ad_{g_u}\Ad_{\exp (t\,\proj{\g_u}{\m_u}X)}\Ad_{\exp(t\,\proj{\g_u}{\k_0}X)}\Ad_{\exp Z}\Upsilon\\
&\sim&
\Ad_{g_u}\Ad_{\exp (t\,\proj{\g_u}{\m_u}X)}\Ad_{\exp(Z+t[\proj{\g_u}{\k_0}X,Z])}\Upsilon
\end{eqnarray*}
we conclude that 
$\eta_{\Ad_{g_u}X}=\left(\proj{\g_u}{\m_u}X, -i[\proj{\g_u}{\k_0}X,Z]\right)\hat{}$.  
From Proposition \ref{prop:metricinhat} and Lemma \ref{lemma:killingform}, it follows that 
\begin{eqnarray*}
\omega_3\left((A,B)\hat{},\eta_{\Ad_{g_u}\proj{\g_u}{\m_u}X}\right)
&=&-\kappa\left(JS_ZA,S_Z\proj{\g_u}{\m_u}X\right)
+\kappa\left(JE_ZB,-i\,E_Z[\proj{\g_u}{\k_0}X,Z]  \right)\\
&=&-\kappa\left(S_ZJS_ZA,\proj{\g_u}{\m_u}X\right)
-i\,\kappa\left(\ad_ZE_ZJE_ZB,\proj{\g_u}{\k_0}X\right)\\
&=&-\kappa\left(S_ZJS_ZA,X\right)
-i\,\kappa\left(\ad_ZE_ZJE_ZB,X\right). 
\end{eqnarray*}
using the orthogonality of $\k_0$ and $\m_0$.

 To verify the formula for $\mu_3$, by Definition \ref{defn:momentmap}, we must verify that 
\begin{eqnarray*}
\frac 12[A,\left(\Ad_{\exp Z}+\Ad_{\exp(-Z)}\right)\Upsilon]&=&-S_ZJS_ZA\\
\frac 12\left(\Ad_{\exp Z}-\Ad_{\exp(-Z)}\right)JE_ZB&=&\ad_ZE_ZJE_ZB
\end{eqnarray*}

The first equation follows from Lemma \ref{lemma:identzero}(2),  
while the second equation follows from Lemma \ref{lemma:oddpart}.  

Finally, we obtain the version of $\mu_3$ in Proposition \ref{prop:moment1} using 
Lemmas \ref{lemma:describePhi}(4) and \ref{lemma:PandJcommute}(3).  
\end{proof}

   \begin{cor}\label{cor:moment2}
 The $\g$-valued, $G$-equivariant, $J_1$-holomorphic moment map on $\O$, relative to the holomorphic symplectic form $-i\omega_2+\omega_3$, is
$-i\mu_2+\mu_3$, which is simply the inclusion map. \qed
\end{cor}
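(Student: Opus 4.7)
The plan is to combine the explicit formulas for $\mu_2$ and $\mu_3$ from Proposition \ref{prop:moment2} and then verify the defining moment map equation for the resulting inclusion map.

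First, for any $p = \Ad_g \Upsilon \in \O$, Proposition \ref{prop:moment2} gives $\mu_2(p) = -\Im(p)$ and $\mu_3(p) = \Re(p)$, where $\Re, \Im$ refer to the decomposition $\g = \g_u \oplus i\g_u$. Hence
\[
(-i\mu_2 + \mu_3)(p) = i\,\Im(p) + \Re(p) = p,
\]
so the map is indeed the inclusion $\O \hookrightarrow \g$. $G$-equivariance and $J_1$-holomorphy are then immediate: $J_1$ is inherited from $\g$ via the embedding $\O\subset\g$, and $G$ acts by the adjoint representation on $\g$.

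For the moment map equation, I would use the formula for the complex KKS form established in the proof of Proposition \ref{prop:existence}: for tangent vectors $v_j = [X_j, p]$ with $X_j \in \g$, one has $i(\omega_2 + i\omega_3)(v_1, v_2) = \kappa(p, [X_1, X_2])$. Multiplying through by $-i$ gives
\[
(-i\omega_2 + \omega_3)(v_1, v_2) = -\kappa(p, [X_1, X_2]).
\]
Then with $v = [X, p]$ an arbitrary tangent vector to $\O$ at $p$ and $\eta_Y = [Y, p]$ the vector field induced by $Y \in \g$, associativity of $\kappa$ (Lemma \ref{lemma:killingform}(1)) yields
\[
(-i\omega_2 + \omega_3)(v, \eta_Y) = -\kappa(p, [X, Y]) = \kappa([X, p], Y) = \kappa(v, Y),
\]
which matches $\kappa(d\mu^\C_p(v), Y) = \kappa(v, Y)$ since $\mu^\C$ is the inclusion and so $d\mu^\C_p(v) = v$. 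This is the defining identity of Definition \ref{defn:momentmap} for the complex group $G$ with its complex Lie algebra $\g$.

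There is no genuine obstacle; the content is the classical identification of the holomorphic moment map on a (co)adjoint orbit with the inclusion map, and the only care needed is bookkeeping the factors of $i$ relating $\omega_2 + i\omega_3$, the KKS form, and the form $-i\omega_2 + \omega_3$ named in the corollary. Essentially all the work has already been done in Proposition \ref{prop:moment2}; this corollary just repackages those real $G_u$-moment maps into a single complex $G$-moment map.
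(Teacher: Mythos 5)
Your proposal is correct and matches the paper's (implicit) argument: the corollary carries no separate proof precisely because it is the immediate combination $-i\mu_2+\mu_3=i\,\Im+\Re=\mathrm{id}$ of the formulas in Proposition \ref{prop:moment2}, and your additional verification of the moment condition via the KKS form just makes explicit the standard fact the paper takes for granted. (One cosmetic slip: passing from $i(\omega_2+i\omega_3)(v_1,v_2)=\kappa(p,[X_1,X_2])$ to $(-i\omega_2+\omega_3)(v_1,v_2)=-\kappa(p,[X_1,X_2])$ is multiplication by $-1$, not by $-i$, though the identity you end up with is the right one.)
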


\section{Statements of Main Results}
\label{s:deformations}
The following theorem shows that {\em for $\lambda\in\C^*$, $T_\lambda$ intertwines $J_{(\lambda)}$ with $J_1$.}  Thus, except for $\pm J_3$, all complex structures on $\O$ arising from the hyperk\"ahler structure are $G_u$-equivariantly equivalent to $J_1$ via an explicit map.  

\begin{theorem}\label{t:equiv} 
Let $\lambda\in\C^*$, let $A,B\in\m_u$, let $Z\in\m_0$, and let $g_u\in G_u$.  We have 
$$dT_{\lambda}^{-1}\circ J_1\circ dT_{\lambda}(A,B)\hat{}_{g_u,Z}=J_{(\lambda)}(A,B)\hat{}_{g_u,Z}  \qed $$

\end{theorem}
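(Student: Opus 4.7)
\textbf{Proof plan for Theorem \ref{t:equiv}.}

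The plan is to compute both sides explicitly on hat tangent vectors and compare, using the fact that $J_1$ is induced from multiplication by $i$ on $\g$ when $\O$ is viewed as a $G$-orbit. First I would compute $dT_\lambda(A,B)\hat{}_{g_u,Z}$ by applying $T_\lambda$ to the defining curve $\Ad_{g_u}\Ad_{\exp tA}\Ad_{\exp(Z+tiB)}\Upsilon$, obtaining $\Ad_{g_u\exp(tA)}\Ad_{\exp(t_\lambda Z + t\,t_\lambda(iB))}\Upsilon$. Expanding to first order with $E_{t_\lambda Z,l}$, noting that only the $\m$-part of this operator survives (since $\Upsilon$ is $\k$-central) and invoking Lemma \ref{l:t}(5) to get $E_{t_\lambda Z}(t_\lambda(iB)) = i\,t_\lambda E_Z B$, gives
\begin{equation*}
dT_\lambda(A,B)\hat{}_{g_u,Z} = [X, T_\lambda p], \qquad X = \Ad_{g_u}\bigl(A + \Ad_{\exp t_\lambda Z}(i\,t_\lambda E_Z B)\bigr),
\end{equation*}
where $p = \Ad_{g_u}\Ad_{\exp Z}\Upsilon$.

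Since $J_1$ on $\O$ sends $[X, T_\lambda p]$ to $[iX, T_\lambda p]$, the condition $dT_\lambda(A',B')\hat{} = J_1 \circ dT_\lambda(A,B)\hat{}$ becomes the requirement that $X' - iX$ lie in the stabilizer $\Ad_{g_u\exp(t_\lambda Z)}\k$. Conjugating by $\Ad_{\exp(-t_\lambda Z)}$ and projecting to $\m$ yields the single vector equation
\begin{equation*}
S_{t_\lambda Z}(A' - iA) + i\,t_\lambda E_Z B' + t_\lambda E_Z B = 0.
\end{equation*}
Applying $t_{1/\lambda}$ and invoking Lemma \ref{l:t}(5) simplifies this to
\begin{equation*}
t_{1/\lambda}(A' - iA) + S_Z^{-1}E_Z(iB' + B) = 0.
\end{equation*}
Splitting into $\m^+$ and $\m^-$ parts — where $t_{1/\lambda}$ acts by $\lambda^{-1}$ and $\lambda$ respectively, and $S_Z^{-1}E_Z$ preserves each summand by Lemma \ref{lemma:projectionlemma} — gives two independent equations.

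Next I would impose the reality constraint $A', B' \in \m_u$, i.e.\ $A'^- = \theta A'^+$ and $B'^- = \theta B'^+$. Applying $\theta$ to the $\m^+$-equation and using that $\theta$ is antilinear but commutes with $E_Z, S_Z$ (since $Z \in \m_0$), one obtains a second equation that together with the $\m^-$-equation determines $A'$ and $B'$ uniquely in terms of $A$ and $B$. Explicit solution yields
\begin{equation*}
B'^+ = \frac{2}{\lambda + 1/\bar\lambda} E_Z^{-1}S_Z A^+ + \frac{\lambda - 1/\bar\lambda}{-i(\lambda + 1/\bar\lambda)}\,B^+,
\end{equation*}
and an analogous expression for $A'^+$ (with $B'^-$, $A'^-$ obtained by applying $\theta$).

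The final step is to match this with $J_{(\lambda)}(A,B)\hat{}$ computed from Lemma \ref{lemma:hatalmostcomplexstructures}: on $\m^+$ one has $J = i$, so $J_{(\lambda)}$ contributes coefficients $\lambda_1 - i\lambda_2$ and $-i\lambda_3$. A direct algebraic verification using the stereographic formulas of Definition \ref{d:stereo} shows $2/(\lambda + 1/\bar\lambda) = \lambda_1 - i\lambda_2$ and $(\lambda - 1/\bar\lambda)/(-i(\lambda + 1/\bar\lambda)) = -i\lambda_3$, with the analogous identification for $A'^+$ using $\lambda(1+\lambda_3) = \lambda_1 + i\lambda_2$ and $\lambda(\lambda_1 - i\lambda_2) = 1 - \lambda_3$. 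The main obstacle is precisely the step flagged in the introduction: because $t_\lambda Z \notin \m_0$, one must resolve tangent vectors carefully into $\m^\pm$-parts and use the antilinearity of $\theta$ to recover the $\m_0$-reality of $A', B'$; once this bookkeeping is done, the remaining identifications are routine manipulations with stereographic projection.
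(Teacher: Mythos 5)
Your setup is sound and is essentially the paper's: the first-order expansion of the curve, the observation that $J_1$ multiplies the generating vector by $i$, and the reduction to the single linear condition $S_{t_\lambda Z}(A'-iA)+i\,t_\lambda E_ZB'+t_\lambda E_ZB=0$ reproduce Proposition \ref{p:ABCD} applied with $X=t_\lambda Z$. The gap is in how you solve this equation. You claim that $S_Z^{-1}E_Z$ preserves $\m^+$ and $\m^-$ "by Lemma \ref{lemma:projectionlemma}," but that lemma only lists equivalent conditions for an operator to have this property; it does not assert that $S_Z^{-1}E_Z$ satisfies them, and in fact it does not. For $Z=Z^++Z^-\in\m_0$ and $W^+\in\m^+$ one has $\ad_Z^2W^+=\ad_{Z^+}\ad_{Z^-}W^++\ad_{Z^-}^2W^+$, whose second summand lies in $\m^-$, so $\ad_Z^2$ (hence any single power series in it, such as $S_Z^{-1}E_Z$) mixes $\m^+$ and $\m^-$. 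Only products of a power series with its $J$-conjugate, such as $S_{JZ}S_Z$, preserve the summands (Lemma \ref{lemma:projectioncommutes}, Corollary \ref{cor:order2again}). Consequently your equation does not decouple into the "two independent equations" you describe, and your closed form for $B'^+$ is wrong: unwinding it gives a $B$-contribution of $-\lambda_3JB$ to $B'$, whereas the correct answer, forced by Lemma \ref{lemma:hatalmostcomplexstructures}(3), is $-\lambda_3E_Z^{-1}S_ZJS_Z^{-1}E_ZB$; likewise the $A$-contribution must be $E_Z^{-1}S_Z(\lambda_1-\lambda_2J)A$, not $(\lambda_1-\lambda_2J)E_Z^{-1}S_ZA$. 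These agree only if $E_Z^{-1}S_Z$ commuted with $J$, which is exactly what fails.

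The paper sidesteps this by not solving the linear system at all: it takes the known expressions for $J_1,J_2,J_3$ on hat vectors (Lemma \ref{lemma:hatalmostcomplexstructures}) as an ansatz for $(C,t_\lambda^{-1}D)=J_{(\lambda)}(A,B)\hat{}$, substitutes into the linear condition, moves $t_\lambda$ through the power series via Lemma \ref{l:t}(4),(5), and reduces the verification to the two scalar operator identities $t_\lambda^{-1}=\lambda_1-\lambda_2J-i\lambda_3t_\lambda^{-1}J$ and $I=\lambda_1t_\lambda^{-1}+\lambda_2t_\lambda^{-1}J+i\lambda_3J$, which are checked on $\m^\pm$ separately (Lemma \ref{l:t}(6)). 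Your plan can be repaired along similar lines — for instance by applying $\theta$ to the whole equation (not to an "$\m^+$-part") and adding/subtracting, the operators $t_{1/\lambda},t_{\bar\lambda}$ combine into $\lambda_1+\lambda_2J$ and $-\lambda_3J$ sitting on the correct side of $S_Z^{-1}E_Z$ — but as written the decoupling step and the resulting formula do not hold.
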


Recall that $J_1$ gives the usual complex structure on $\O$, and the usual holomorphic action of $G$ on $\O$ is $(g,x)\mapsto \Ad_g(x)$ (where $x\in \O$).  For $\lambda\in\C^*$ (which is to say, $J_{(\lambda)}\neq \pm J_3$), we have:

\begin{cor}\label{cor:J1}
\begin{enumerate}
\item
For $\O$ equipped with the complex structure $J_{(\lambda)}$, there is a holomorphic action of $G$ given by $(g,x)\mapsto 
T_{\lambda}^{-1}\circ \Ad_g\circ T_{\lambda}(x)$.  
\item This holomorphic action extends the standard action of $G_u$ on $\O$ via $\Ad$.  
\item This holomorphic action is transitive.\footnote{Note the change at $J_3$.  Here the holomorphic action is the usual action of $G$ on $\twist GQ{\m^-}$, which is not transitive.}  \qed
\end{enumerate}
\end{cor}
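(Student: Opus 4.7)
The strategy is to bootstrap from Theorem \ref{t:equiv}. Combined with Proposition \ref{p:TBijection}, it says that $T_\lambda$ is a $G_u$-equivariant biholomorphism from $(\O, J_{(\lambda)})$ to $(\O, J_1)$. The three parts of the corollary then become essentially formal transfers of properties of the standard $G$-action on $(\O, J_1)$ across this biholomorphism.

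For part (1), I would set $\rho_\lambda(g,x) := T_\lambda^{-1}(\Ad_g(T_\lambda(x)))$. The group action axioms (associativity and identity) follow formally from the fact that conjugating a group action by a bijection produces a group action. For holomorphicity in $x$, factor $\rho_\lambda(g,-)$ as
$(\O, J_{(\lambda)}) \xrightarrow{T_\lambda} (\O, J_1) \xrightarrow{\Ad_g} (\O, J_1) \xrightarrow{T_\lambda^{-1}} (\O, J_{(\lambda)})$;
the outer maps are biholomorphisms by Theorem \ref{t:equiv} (read as $J_1 \circ dT_\lambda = dT_\lambda \circ J_{(\lambda)}$), and $\Ad_g$ is a biholomorphism of $(\O, J_1)$. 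Joint holomorphicity in $(g,x)$ on $G \times (\O, J_{(\lambda)})$ then follows from the joint holomorphicity of the standard action $G \times (\O, J_1) \to (\O, J_1)$ composed on either side with biholomorphisms.

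For part (2), the $G_u$-equivariance of $T_\lambda$ (asserted immediately after Definition \ref{d:T}) gives $T_\lambda \circ \Ad_{g_u} = \Ad_{g_u} \circ T_\lambda$ for all $g_u \in G_u$, so $\rho_\lambda(g_u,x) = T_\lambda^{-1}(\Ad_{g_u}(T_\lambda(x))) = \Ad_{g_u}(x)$ on $\O$. For part (3), the standard $G$-action on $\O$ is transitive by the very definition of $\O$ as the $\Ad(G)$-orbit of $\Upsilon$; since $T_\lambda$ is a bijection, for any $x \in \O$ we have $\rho_\lambda(G,x) = T_\lambda^{-1}(\Ad_G(T_\lambda(x))) = T_\lambda^{-1}(\O) = \O$, proving transitivity.

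I do not anticipate any serious obstacle here, since once Theorem \ref{t:equiv} and Proposition \ref{p:TBijection} are in hand, the corollary is a purely formal consequence of $T_\lambda$ being a $G_u$-equivariant biholomorphism between $(\O, J_{(\lambda)})$ and $(\O, J_1)$. The only item worth a sentence of care is the joint holomorphicity in $(g,x)$, but this is immediate from the explicit formula in Definition \ref{d:T}, which makes $T_\lambda$ visibly smooth in the $\O$-variable and constant in $g$.
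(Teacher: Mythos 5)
Your proposal is correct and follows the same route as the paper, which simply records that Part (1) is immediate from Theorem \ref{t:equiv}, that this gives Part (3), and that Part (2) follows from the $G_u$-equivariance of $T_\lambda$. You have merely expanded the paper's one-line argument into its formal components (conjugating the transitive holomorphic $\Ad_G$-action by the $G_u$-equivariant biholomorphism $T_\lambda$), so there is nothing substantively different here.
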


The map $T_\lambda$ also intertwines holomorphic symplectic forms for the complex structures $J_{(\lambda)}$ and $J_1$, as we now make precise.  
Beginning with $\lambda\in\C^*$, we can construct a holomorphic symplectic form relative to $J_{(\lambda)}$ as follows.  We use the notation from Section \ref{s:notation}.  Noting that the vector 
$(\lambda_1,\lambda_2,\lambda_3)$
 is orthogonal to the vector  $\frac 1{\lambda_1^2+\lambda_2^2}(-\lambda_2,\lambda_1,0)$ and that their cross product is 
$\frac 1{\lambda_1^2+\lambda_2^2}\left(-\lambda_1\lambda_3,-\lambda_2\lambda_3,\lambda_1^2+\lambda_2^2\right)$, we obtain the holomorphic symplectic form 
$$\frac{1}{\lambda_2^2+\lambda_2^2}\left(   (-\lambda_2-i\lambda_1\lambda_3)\omega_1  +   (\lambda_1-i\lambda_2\lambda_3)\omega_2  
+i(\lambda_1^2+\lambda_2^2)\omega_3
\right)$$
which simplifies to $\frac i2\left(\lambda-\lambda^{-1}\right)\omega_1+\frac 12\left(\lambda+\lambda^{-1}\right)\omega_2+i\omega_3$.  In fact, {\it this is the pullback  via the map $T_\lambda$  of the standard holomorphic symplectic form relative to $J_1$ }:

\begin{theorem}\label{t:holsymplintertwine}
$(dT_{\lambda})^*(\omega_2+i\omega_3)=\frac i2\left(\lambda-\lambda^{-1}\right)\omega_1+\frac 12\left(\lambda+\lambda^{-1}\right)\omega_2+i\omega_3$   \qed
\end{theorem}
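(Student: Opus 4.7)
The plan is to evaluate both sides on a pair of hat tangent vectors $(A_1,B_1)\hat{}$, $(A_2,B_2)\hat{}$ at $\Ad_{g_u}\Ad_{\exp Z}\Upsilon$ and match them using the explicit Kirillov--Kostant--Souriau description of $\omega_2+i\omega_3$.  Since $iB_j\in i\m_u=\m_0$, the defining curve $t\mapsto \Ad_{g_u}\Ad_{\exp(tA_j)}\Ad_{\exp(Z+tiB_j)}\Upsilon$ is already in the form (factor from $G_u)\cdot\exp(\m_0)\cdot\Upsilon$ to which $T_\lambda$ applies, giving $t\mapsto\Ad_{g_u}\Ad_{\exp(tA_j)}\Ad_{\exp t_\lambda(Z+tiB_j)}\Upsilon$.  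Using the standard expansion $\exp(Y+sW)\sim\exp(Y)\exp(sE_{Y,l}W)$ and the conjugation $\Ad_{\exp(tA)}\Ad_{\exp Y}=\Ad_{\exp Y}\Ad_{\exp(t\,\Ad_{\exp(-Y)}A)+O(t^2)}$, I would rewrite $dT_\lambda(A_j,B_j)\hat{}$ at $\mu:=\Ad_{g_u}\Ad_{\exp t_\lambda Z}\Upsilon$ as $\ad_{\Ad_{g_u\exp t_\lambda Z}X_j}(\mu)$ with $X_j = \Ad_{\exp(-t_\lambda Z)}A_j + iE_{t_\lambda Z,l}(t_\lambda B_j)$.

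Since $i(\omega_2+i\omega_3)$ is the KKS form on $\O$, it sends this pair of tangent vectors at $\mu$ to $\kappa(\Upsilon,[X_1,X_2])$ by Ad-invariance of $\kappa$.  Since $\ad_\Upsilon$ kills $\k$ and $\k\perp\m$ under $\kappa$, only the $\m$-parts contribute: using $S_Y=\frac 12(\Ad_{\exp Y}+\Ad_{\exp(-Y)})$ and the fact that $E_{Y,l}-E_Y=-F_Y$ sends $\m$ to $\k$, we obtain $X_j^\m=S_{t_\lambda Z}A_j+iE_{t_\lambda Z}(t_\lambda B_j)$.  Lemma~\ref{l:t}(5) rewrites $S_{t_\lambda Z}=t_\lambda S_Z t_{1/\lambda}$ and $E_{t_\lambda Z}=t_\lambda E_Z t_{1/\lambda}$, and $t_{1/\lambda}t_\lambda=I$ on $\m$ collapses the second factor, yielding $X_j^\m=t_\lambda Y_j$ with $Y_j:=S_Zt_{1/\lambda}A_j+iE_ZB_j$.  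Then Lemma~\ref{l:t}(1) ($J$ commutes with $t_\lambda$) and Lemma~\ref{lemma:killingform}(5) ($\kappa$ is $t_\lambda$-invariant on $\m$) collapse everything:
\[
(dT_\lambda)^*(\omega_2+i\omega_3) = -i\kappa\bigl(\Upsilon,\,[S_Zt_{1/\lambda}A_1+iE_ZB_1,\;S_Zt_{1/\lambda}A_2+iE_ZB_2]\bigr),
\]
which is exactly the formula for $\omega_2+i\omega_3$ in Proposition~\ref{prop:metricinhat}, but with $A_j$ replaced by $t_{1/\lambda}A_j$.

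To finish, I would write $t_{1/\lambda}=\alpha I+i\beta J$ where $\alpha=\frac{\lambda+\lambda^{-1}}{2}$ and $\beta=\frac{\lambda-\lambda^{-1}}{2}$ (from $t_{1/\lambda}=\lambda^{-1}P^++\lambda P^-$ combined with the fact that the projections to $\m^{\pm}$ equal $\frac12(I\mp iJ)$), substitute, expand the bilinear form, and match against $i\beta\omega_1+\alpha\omega_2+i\omega_3$ using Proposition~\ref{prop:metricinhat}.  The three identities to be verified along the way are (i) $\kappa(\Upsilon,[S_ZJA_1,S_ZJA_2])=\kappa(\Upsilon,[S_ZA_1,S_ZA_2])$, (ii) $\kappa(\Upsilon,[S_ZA_1,S_ZJA_2])+\kappa(\Upsilon,[S_ZJA_1,S_ZA_2])=0$, and (iii) $\kappa(\Upsilon,[A_1,S_ZJE_ZB_2])=-\kappa(\Upsilon,[S_ZJA_1,E_ZB_2])$, each a short computation using $JS_Z=S_{JZ}J$ (Lemma~\ref{lemma:SZ}(2)), the $\kappa$-self-adjointness of $S_Z$ and $S_{JZ}$ (Lemma~\ref{lemma:killingform}(3)), and $\kappa(JX,JY)=\kappa(X,Y)$.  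Combined with $\alpha^2-\beta^2=1$, identity (i) merges the $\alpha^2$ and $-\beta^2$ contributions into a single $\omega_3$-term, (ii) annihilates the $i\alpha\beta$ cross-terms, and (iii) converts the remaining $-\beta$ terms into the asymmetric form of $\omega_1$ given in Proposition~\ref{prop:metricinhat}.

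The main obstacle is the bookkeeping in this final expansion: nine bilinear pieces must be organized and matched against the three weighted real $2$-forms, and care is required with the interplay between $J$ and $S_Z$ (neither of which, individually, preserves $\m_0$ once $\lambda\notin\R$).  Each individual simplification is a one-line Killing-form manipulation, and no tools are needed beyond those already developed in Section~\ref{s:computation}.
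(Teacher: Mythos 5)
Your proposal is correct and follows essentially the same route as the paper: compute $dT_\lambda$ on the hat vectors, feed the result into the KKS description of $\omega_2+i\omega_3$, commute $t_\lambda$ through the even power series via Lemma \ref{l:t}(5), and use the $t_\lambda$-invariance of $\kappa$ to reduce to a term-by-term match against Proposition \ref{prop:metricinhat}. The only (cosmetic) difference is in the endgame: you factor $t_\lambda$ out completely and expand $t_{1/\lambda}=\tfrac{\lambda+\lambda^{-1}}{2}I+i\tfrac{\lambda-\lambda^{-1}}{2}J$ with $\alpha^2-\beta^2=1$, whereas the paper keeps $S_{t_\lambda Z}$ in the $A_1,A_2$ term and disposes of it by a direct $\m^+\oplus\m^-$ computation — both hinge on the same facts, namely that $S_{JZ}S_Z$ commutes with $J$ and that $\kappa$ is isotropic on $\m^{\pm}$.
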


\vskip .1 true in 

Above, we showed the equivalence between the complex structures $J_1$ and $J_{(\lambda)}\neq \pm J_3$, their associated holomorphic $G$ actions, and their associated holomorphic symplectic forms.  Below, we show that in contrast, the real symplectic forms associated to the various $J_{(\lambda)}$ are deformations of the real symplectic form $\omega_3$ associated to $J_3$, except when 
$|\lambda|=1$.  

Specifically, given $\lambda\in\C^*$ with $|\lambda|\neq 1$, define $\nu=\nu(\lambda)=\frac{\lambda(1+|\lambda|)}{|\lambda|(1-|\lambda|)}\in\C^*$.  
The following theorem shows that  {\it the pullback via $T_\nu$ of the standard real symplectic form for  $J_3$ is a nonzero constant multiple of the standard real symplectic form for $J_{(\lambda)}$: }  
\begin{theorem}\label{t:J3}
$\dfrac{1-|\lambda|^2}{1+|\lambda|^2}(dT_{\nu})^*(\omega_3)=\omega_{(J)}$   \qed 
\end{theorem}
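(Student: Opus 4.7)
The plan is to deduce Theorem~\ref{t:J3} as a direct consequence of Theorem~\ref{t:holsymplintertwine}. Since $T_\nu$ is a real diffeomorphism of $\O$, the pullback $(dT_\nu)^*$ sends real forms to real forms; the complex identity of Theorem~\ref{t:holsymplintertwine} (applied to $\nu$ in place of $\lambda$) therefore splits into two real identities, one giving $(dT_\nu)^*\omega_2$ and the other giving $(dT_\nu)^*\omega_3$. I would extract the imaginary part of that identity to isolate $(dT_\nu)^*\omega_3$, and then show that multiplication by $\lambda_3=(1-|\lambda|^2)/(1+|\lambda|^2)$ converts the resulting coefficients into the stereographic coordinates $\lambda_1$ and $\lambda_2$ of Definition~\ref{d:stereo}.

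The first step is to compute $\nu\pm\nu^{-1}$ from $\nu=\lambda(1+|\lambda|)/\bigl(|\lambda|(1-|\lambda|)\bigr)$. Writing $\lambda=re^{i\theta}$ with $r=|\lambda|$, and keeping the expressions in rational form to avoid any case-split on $r<1$ versus $r>1$, a short computation gives
\begin{equation*}
\nu\pm\nu^{-1}\;=\;\frac{e^{i\theta}(1+r)^2\pm e^{-i\theta}(1-r)^2}{1-r^2}.
\end{equation*}
Expanding via Euler's formula yields $\tfrac{i}{2}(\nu-\nu^{-1})=-\tfrac{(1+r^2)\sin\theta}{1-r^2}+i\tfrac{2r\cos\theta}{1-r^2}$ and $\tfrac{1}{2}(\nu+\nu^{-1})=\tfrac{(1+r^2)\cos\theta}{1-r^2}+i\tfrac{2r\sin\theta}{1-r^2}$.

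Substituting these into Theorem~\ref{t:holsymplintertwine} and taking imaginary parts gives
\begin{equation*}
(dT_\nu)^*\omega_3\;=\;\tfrac{2r\cos\theta}{1-r^2}\,\omega_1\,+\,\tfrac{2r\sin\theta}{1-r^2}\,\omega_2\,+\,\omega_3.
\end{equation*}
Multiplying through by $(1-r^2)/(1+r^2)$ clears the denominators and produces $\tfrac{2r\cos\theta}{1+r^2}\omega_1+\tfrac{2r\sin\theta}{1+r^2}\omega_2+\tfrac{1-r^2}{1+r^2}\omega_3$. By Definition~\ref{d:stereo} these coefficients are exactly $\lambda_1,\lambda_2,\lambda_3$, so the right-hand side equals $\omega_{(\lambda)}$, finishing the proof.

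The calculation is entirely mechanical once Theorem~\ref{t:holsymplintertwine} is in hand: the real work lies upstream, and the particular choice $\nu=\lambda(1+|\lambda|)/\bigl(|\lambda|(1-|\lambda|)\bigr)$ is engineered precisely so that the imaginary part, after multiplication by $\lambda_3$, reproduces the stereographic combination $\lambda_1\omega_1+\lambda_2\omega_2+\lambda_3\omega_3$. The only potential pitfall is the sign behavior of $\nu$ when $|\lambda|$ crosses $1$; treating $\nu\pm\nu^{-1}$ algebraically as a rational expression in $\lambda$ (rather than via a polar decomposition of $\nu$) handles both regimes with a single computation.
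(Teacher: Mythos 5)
Your proof is correct. Both you and the paper exploit the same underlying idea---that $\omega_3$ is the imaginary part of $\omega_2+i\omega_3$, so Theorem \ref{t:J3} should fall out of the intertwining of holomorphic symplectic forms applied with $\nu$ in place of $\lambda$---but the execution differs. The paper does not take imaginary parts of the scalar-coefficient identity in Theorem \ref{t:holsymplintertwine}; it returns to the Killing-form expression of Proposition \ref{prop:holsympcalc}, where the mixed terms carry the \emph{operator} $t_\nu^{-1}$ rather than a scalar, extracts the imaginary part via Lemma \ref{l:t}(7), expands $\omega_{(\lambda)}$ from Proposition \ref{prop:metricinhat}, and reduces everything to the operator identity $t_\nu^{-1}-t_{\overline\nu}=2i(\lambda_1J+\lambda_2)/\lambda_3$ on $\m$, verified on $\m^+$ and $\m^-$ separately. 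Your route instead treats Theorem \ref{t:holsymplintertwine} as an already-established identity of complex-valued real $2$-forms and takes imaginary parts there; this is legitimate because $dT_\nu$ is a real linear map, so $(dT_\nu)^*\omega_2$ and $(dT_\nu)^*\omega_3$ are precisely the real and imaginary parts of the left side, while $\omega_1,\omega_2,\omega_3$ take real values on real tangent vectors. Your arithmetic checks out: $\Im\bigl(\tfrac i2(\nu-\nu^{-1})\bigr)=\tfrac{2r\cos\theta}{1-r^2}$ and $\Im\bigl(\tfrac 12(\nu+\nu^{-1})\bigr)=\tfrac{2r\sin\theta}{1-r^2}$, and multiplication by $\lambda_3=\tfrac{1-r^2}{1+r^2}$ produces exactly the stereographic coefficients $\lambda_1,\lambda_2,\lambda_3$ of Definition \ref{d:stereo}; the rational form of $\nu\pm\nu^{-1}$ does handle $|\lambda|<1$ and $|\lambda|>1$ uniformly, with $|\lambda|=1$ excluded by hypothesis. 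What your version buys is brevity and the elimination of all further operator bookkeeping, making Theorem \ref{t:J3} a formal corollary of Theorem \ref{t:holsymplintertwine} plus the definition of $\nu$; what the paper's version buys is an explicit record of the operator identity relating $t_\nu$ to $\lambda_1J+\lambda_2$, at the cost of partially redoing a computation already encapsulated in the earlier theorem.
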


It follows that {\em for each real symplectic form  $\omega_{(\lambda_1,\lambda_2,\lambda_3)}$ with $\lambda_3\neq 0$, the corresponding moment map is, up to scalar multiple, a deformation (via $T_{\nu}$) of the moment map $\mu_3$ associated with $\omega_3$:}

\begin{cor}\label{c:J3}
For all $\lambda\in\C^*$ with $\left|\lambda\right|\neq 1$, we have 
$\mu_{(\lambda)}=\dfrac{1-|\lambda|^2}{1+|\lambda|^2}\,\mu_3\circ T_\nu$.  \qed
\end{cor}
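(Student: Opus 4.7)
The plan is to derive the corollary as a routine consequence of Theorem \ref{t:J3}, the definition of the moment map, and the $G_u$-equivariance of $T_\nu$. Writing $c(\lambda):=\frac{1-|\lambda|^2}{1+|\lambda|^2}$ for the scalar, Theorem \ref{t:J3} gives $\omega_{(\lambda)} = c(\lambda)\,(dT_\nu)^*\omega_3$, which is the only non-trivial input.

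First, I would record the equivariance consequence: since $T_\nu:\O\rightarrow\O$ is $G_u$-equivariant for the action by $\Ad$ (Section \ref{ss:t}), differentiating $T_\nu\circ\Ad_{\exp(tX)} = \Ad_{\exp(tX)}\circ T_\nu$ at $t=0$ shows that for every $X\in\g_u$ and every $p\in\O$,
\begin{equation*}
dT_\nu\bigl(\eta_X|_p\bigr) = \eta_X|_{T_\nu(p)},
\end{equation*}
where $\eta_X$ denotes the fundamental vector field on $\O$.

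Next, I would combine the two ingredients. For an arbitrary tangent vector $v$ at $p$ and $X\in\g_u$, using Theorem \ref{t:J3}, the equivariance above, and the defining relation $\omega_3(\chi,\eta_X)=\kappa(d\mu_3(\chi),X)$ from Definition \ref{defn:momentmap}, I compute
\begin{align*}
\omega_{(\lambda)}|_p\bigl(v,\eta_X|_p\bigr)
&= c(\lambda)\,\omega_3|_{T_\nu(p)}\bigl(dT_\nu v,\;dT_\nu(\eta_X|_p)\bigr)\\
&= c(\lambda)\,\omega_3|_{T_\nu(p)}\bigl(dT_\nu v,\;\eta_X|_{T_\nu(p)}\bigr)\\
&= c(\lambda)\,\kappa\bigl(d\mu_3|_{T_\nu(p)}(dT_\nu v),\,X\bigr)
= c(\lambda)\,\kappa\bigl(d(\mu_3\circ T_\nu)|_p(v),\,X\bigr).
\end{align*}
Since $\mu_{(\lambda)}$ by definition satisfies $\omega_{(\lambda)}|_p(v,\eta_X|_p)=\kappa(d\mu_{(\lambda)}|_p(v),X)$, and $X\in\g_u$ is arbitrary, nondegeneracy of $\kappa$ on $\g_u$ forces $d\mu_{(\lambda)} = c(\lambda)\,d(\mu_3\circ T_\nu)$ pointwise on $\O$.

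The last step is to upgrade this infinitesimal equality to the claimed identity of maps. Since $\O\simeq G/K$ is connected, the difference $F:=\mu_{(\lambda)} - c(\lambda)\,\mu_3\circ T_\nu$ is a constant element of $\g_u$. Both moment maps are $G_u$-equivariant (by definition for $\mu_{(\lambda)}$, and for $\mu_3\circ T_\nu$ because $\mu_3$ is equivariant and $T_\nu$ is $G_u$-equivariant), so $F$ must be $\Ad(G_u)$-fixed. Because $G$ is simple, $\g_u$ carries no nonzero $\Ad(G_u)$-invariants, giving $F=0$. The main potential obstacle is purely notational rather than mathematical: one must keep straight that $T_\nu$ is the deformation for the parameter $\nu=\nu(\lambda)$ appearing in Theorem \ref{t:J3}, not for $\lambda$ itself, and that Definition \ref{defn:momentmap} identifies $\g_u^*$ with $\g_u$ via $\kappa$, which is what allows the nondegeneracy argument to run uniformly.
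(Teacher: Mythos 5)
Your proof is correct and follows essentially the same route as the paper's: Theorem \ref{t:J3}, the $G_u$-equivariance of $T_\nu$ (giving $dT_\nu(\eta_X|_p)=\eta_X|_{T_\nu(p)}$), and the defining relation for $\mu_3$ are combined into the identical chain of equalities. The only difference is presentational: the paper stops once it has verified that $\frac{1-|\lambda|^2}{1+|\lambda|^2}\,\mu_3\circ T_\nu$ satisfies the two defining conditions of a moment map for $\omega_{(\lambda)}$, whereas you add an explicit uniqueness step (constant difference by connectedness, then invariance plus simplicity of $\g_u$) to identify it with $\mu_{(\lambda)}$.
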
   

Finally, we give explicit formulas for the function $T_\lambda$, both on $\mathcal O$, and (via $\Phi$) on $\twist{G_u}{K_0}{\m_u}$.  

\begin{theorem}\label{thm:Tlambdaformula}  
\begin{enumerate}
\item  Let $g_u\in G_u$ and let $Z\in \m_0$; without loss of generality we may assume that $Z=\sum c_\psi x_\psi\in\a_0$ for some $\{c_\psi\}\subset \R$.  Let  $\lambda\in\C^*$, and write $\lambda = \left|\lambda\right| u^2$ where $\left|u\right|=1$.  We then have that $T_\lambda:\mathcal O\rightarrow \mathcal O$ is given by $\Ad_{g_u}\Ad_{\exp Z}\Upsilon \mapsto \Ad_{g_u'}\Ad_{\exp Z'}\Upsilon$, where 
$$\begin{aligned}
g_u'&=g_u\cdot \Pi_{\psi}\exp(i \cdot s_\psi \cdot t_{u^2}(y_\psi))\\
Z'&=\sum_{\psi}r_\psi \cdot t_{u^2}(x_\psi)
\end{aligned}$$
and where 
$$\begin{aligned}
s_\psi &=\frac 12 \arctan\left(\frac 12\left(\dfrac 1{\left|\lambda\right|}-\left|\lambda\right|\right)\tanh(2c_\psi)\right)\\
r_\psi&=\frac 12\text{\rm arcsinh}\left(\frac 12\left(\dfrac 1{\left|\lambda\right|}+\left|\lambda\right|\right)\sinh(2c_\psi)\right)
\end{aligned}$$
\item Let $g_u\in G_u$ and let $W\in\m_u$; without loss of generality we may assume that $W=\sum id_\psi x_\psi\in i\a_0$ for some $\{d_\psi\}\subset\R$.  Take $\lambda$ as above.  We then have that $\Phi\circ T_\lambda \circ \Phi^{-1}: \twist{G_u}{K_0}{\m_u}\rightarrow \twist{G_u}{K_0}{\m_u}$ is given by $[g_u,W]\mapsto [g_u',W']$, where 
$$\begin{aligned}
g_u'&=g_u\cdot \Pi_{\psi}\exp(i \cdot s_\psi \cdot t_{u^2}(y_\psi))\\
W'&=\frac i2\left(\frac 1{\left|\lambda\right|}+\left|\lambda\right|\right)\sum_{\psi} d_\psi\cdot t_{u^2}(x_\psi)=\frac 12\left(t_{\lambda}+t_{1/\overline{\lambda}}\right)W
\end{aligned}$$
and where
$$\begin{aligned}
s_\psi&=\frac 12\arctan\left(\left(\frac 1{\left|\lambda\right|}-\left|\lambda\right|\right)\left(\frac{-d_\psi}{1+4 d_{\psi}^2}\right)\right)\\
\end{aligned}\qed$$ 
\end{enumerate}
\end{theorem}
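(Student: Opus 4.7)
The approach has two reductions followed by an explicit $\mathfrak{sl}_2$-calculation. Since both $T_\lambda$ and the proposed formula are manifestly $G_u$-equivariant, we may set $g_u = e$. First, the strong orthogonality of $\Psi$ makes the subalgebras $\g[\psi]$ pairwise commute, so $\exp(t_\lambda Z) = \prod_\psi \exp(c_\psi\, t_\lambda(x_\psi))$; combined with $\Upsilon = \Upsilon' + \sum_\psi \Upsilon_\psi$, where $\Upsilon'$ commutes with every $\g[\psi]$, the identity to be proved decomposes into one identity per $\mathfrak{sl}_2$-block $\g[\psi]$. Second, writing $\lambda = |\lambda|u^2$ with $|u|=1$ and choosing real $d$ with $e^{id}=u^2$, Lemma~\ref{l:t}(1) gives $t_{u^2} = \Ad_{\exp(d\Upsilon)}$. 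Because $\exp(d\Upsilon) \in K$ fixes $\Upsilon$ and $t_{u^2}$ is a Lie-algebra automorphism, conjugation by $\exp(d\Upsilon)$ peels a common $t_{u^2}$ off both sides, reducing the problem to the per-block verification (with $\lambda' := |\lambda|$)
\[
\Ad_{\exp(c\lambda' e_\psi + c\lambda'^{-1}f_\psi)}\Upsilon_\psi \;=\; \Ad_{\exp(is_\psi y_\psi)}\Ad_{\exp(r_\psi x_\psi)}\Upsilon_\psi.
\]

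In a single block (dropping subscripts), Lemma~\ref{lemma:sl2} with $a=c\lambda'$, $b=c/\lambda'$, $\sqrt{ab}=c$ gives
\[
\mathrm{LHS} \;=\; \cosh(2c)\Upsilon_\psi - \tfrac{i\lambda'}{2}\sinh(2c)\,e + \tfrac{i}{2\lambda'}\sinh(2c)\,f.
\]
For the RHS, Lemma~\ref{lemma:sl2} with $a=b=r$ gives $\Ad_{\exp(rx)}\Upsilon_\psi = \cosh(2r)\Upsilon_\psi - \tfrac{i}{2}\sinh(2r)(e-f)$; then $isy = -s(e-f)$, and the $\mathfrak{sl}_2$-commutation relations yield $\exp(-s\ad_{e-f})h = \cos(2s)h + \sin(2s)(e+f)$ while $e-f$ is fixed. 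Assembling,
\[
\mathrm{RHS} \;=\; \cos(2s)\cosh(2r)\Upsilon_\psi + \tfrac{i}{2}\bigl(\cosh(2r)\sin(2s)-\sinh(2r)\bigr)e + \tfrac{i}{2}\bigl(\cosh(2r)\sin(2s)+\sinh(2r)\bigr)f.
\]
Matching coefficients of $\Upsilon_\psi$, $e$, $f$ and taking sum and difference of the $e$- and $f$-equations produces $\sinh(2r) = \tfrac12(\lambda'^{-1}+\lambda')\sinh(2c)$ and $\cosh(2r)\sin(2s) = \tfrac12(\lambda'^{-1}-\lambda')\sinh(2c)$; dividing the latter by the $\Upsilon_\psi$-equation $\cos(2s)\cosh(2r) = \cosh(2c)$ yields $\tan(2s) = \tfrac12(\lambda'^{-1}-\lambda')\tanh(2c)$. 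These are exactly the Part~(1) formulas for $r_\psi, s_\psi$, and the three scalar equations are automatically consistent via the Pythagorean identities $\cosh^2-\sinh^2=1$ and $\cos^2+\sin^2=1$. The uniqueness of the real solutions also establishes Proposition~\ref{p:TBijection} as a corollary.

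For Part~(2), apply $\Phi$. Lemma~\ref{lemma:describePhi} gives $\Phi(\Ad_{g_u'}\Ad_{\exp Z'}\Upsilon) = [g_u', -iP(Z')]$; since $P$ is $K$-equivariant with $t_{u^2} \in \Ad(K)$, and since $P(\sum r_\psi x_\psi) = \tfrac12\sum\sinh(2r_\psi)x_\psi$ on $\a_0$, we obtain $-iP(Z') = -\tfrac{i}{2}\sum\sinh(2r_\psi)\,t_{u^2}(x_\psi)$. Combining this with Part~(1) and the conversion $d_\psi = -\tfrac12\sinh(2c_\psi)$ (read off from $W = -iP(Z)$ via Lemma~\ref{lemma:describePhi}(2)) yields the stated $W'$, the collapse of $\tfrac12(t_\lambda + t_{1/\overline\lambda})W$ to $\tfrac{i}{2}(|\lambda|+|\lambda|^{-1})\sum d_\psi t_{u^2}(x_\psi)$ being a short computation using $\lambda + \overline\lambda^{-1} = (|\lambda|+|\lambda|^{-1})u^2$; the $s_\psi$-formula then follows by rewriting $\tanh(2c_\psi)$ in terms of $d_\psi$ via $\cosh(2c_\psi) = \sqrt{1+4d_\psi^2}$. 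The main obstacle throughout is that $t_\lambda(Z) \notin \m_0$ whenever $|\lambda| \neq 1$, so $\Ad_{\exp(t_\lambda Z)}\Upsilon$ is not already in the Kobayashi normal form $\Ad_{g_u'}\Ad_{\exp Z'}\Upsilon$ used throughout the paper; rewriting it in that form forces the simultaneous appearance of both a hyperbolic $\m_0$-factor $\exp(r_\psi x_\psi)$ and a trigonometric $\m_u$-factor $\exp(is_\psi y_\psi)$, after conjugation by the phase element $\exp(d\Upsilon) \in K$ absorbs the argument of $\lambda$.
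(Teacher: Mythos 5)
Your proof is correct and follows essentially the same route as the paper: reduction to commuting $\mathfrak{sl}_2$-blocks via strong orthogonality, the explicit $2\times 2$ computation of Lemma \ref{lemma:sl2}, coefficient matching to produce the three scalar equations (with $r$ and $s$ read off from two of them and the remaining equation $\cosh(2c)=\cos(2s)\cosh(2r)$ checked by Pythagorean identities), and Part (2) deduced from Part (1) via $\Phi$ and the formula for $P$ on $\a_0$. The only cosmetic difference is that you strip the phase $u^2$ by conjugating with $\exp(d\Upsilon)\in K$ before the block computation, whereas the paper carries the $u^2,\overline u^2$ entries through the matrix calculation.
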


Note that for $\lambda$ as written, the role of $u$ is relatively uninteresting, compared to the situation when $\left|\lambda\right|\neq 1$.  Note also the remarkably simple formula $W'=\frac 12\left(t_{\lambda}+t_{1/\overline{\lambda}}\right)W$.

\begin{cor}\label{cor:Tlambdabijection} 
$T_\lambda$ is a bijection.  \qed
\end{cor}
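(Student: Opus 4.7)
The plan is to deduce bijectivity directly from Theorem~\ref{thm:Tlambdaformula}. Since $\Phi$ is a bijection (Lemma~\ref{lemma:describePhi}(3)), it suffices to show that the induced map $\Phi\circ T_\lambda\circ\Phi^{-1}$ on $\twist{G_u}{K_0}{\m_u}$ is a bijection; Theorem~\ref{thm:Tlambdaformula}(2) gives an explicit formula $[g_u,W]\mapsto[g_u\cdot\alpha(W),\beta(W)]$, with $\beta(W)=\tfrac{1}{2}(t_\lambda+t_{1/\overline\lambda})W$ an $\R$-linear map and $\alpha(W)=\prod_\psi\exp(i\,s_\psi\,t_{u^2}(y_\psi))\in G_u$ a nonlinear $K_0$-equivariant factor whose scalars $s_\psi$ are determined by the coordinates $d_\psi$ of $W$ in the $K_0$-normal form $W\in i\a_0$.

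The key observation is that $\beta$ is a real-linear isomorphism of $\m_u$. Using the decomposition $\m=\m^+\oplus\m^-$ (each summand preserved by $t_\lambda$ and $t_{1/\overline\lambda}$), one sees that $\beta$ acts on $\m^+$ as multiplication by the nonzero complex number $\tfrac{1}{2}(\lambda+1/\overline\lambda)=\tfrac{\lambda(1+|\lambda|^2)}{2|\lambda|^2}$ and on $\m^-$ by its conjugate; in particular $\beta$ is bijective on $\m$, and since it preserves $\m_u$ (by a direct $\theta$-invariance check) it is a real-linear isomorphism of $\m_u$ with explicit inverse. Moreover, Lemma~\ref{l:t}(2) gives that $\beta$ commutes with $\Ad_{K_0}$, so $\beta^{-1}$ is $K_0$-equivariant as well.

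With $\beta^{-1}$ in hand, I would define the candidate inverse $[h,V]\mapsto[h\cdot\alpha(\beta^{-1}V)^{-1},\beta^{-1}V]$, verify that it descends to a well-defined map on $\twist{G_u}{K_0}{\m_u}$ using the $K_0$-equivariance of $\beta^{-1}$ together with the already-known well-definedness and $G_u$-equivariance of $T_\lambda$, and check by direct substitution that it composes with the forward map to the identity on both sides. The main obstacle is bookkeeping with the nonlinear factor $\alpha(W)$, whose scalars $s_\psi$ depend nontrivially on the $d_\psi$; this nonlinearity is exactly why one cannot simply define the inverse via a symbolic rule like $T_{1/\lambda}$ (the naive composition $T_{1/\lambda}\circ T_\lambda$ fails to be the identity, because $t_\lambda Z$ need not lie in $\m_0$, as emphasized in the introduction), and it forces the argument to rely on the explicit structure provided by Theorem~\ref{thm:Tlambdaformula} rather than on the symbolic definition of $T_\lambda$.
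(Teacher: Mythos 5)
Your proposal is correct and follows essentially the same route as the paper: pass to $\twist{G_u}{K_0}{\m_u}$ via $\Phi$, and observe from Theorem \ref{thm:Tlambdaformula}(2) that the second slot transforms by the real-linear isomorphism $\frac 12\left(t_\lambda+t_{1/\overline\lambda}\right)$ of $\m_u$ while the first slot is multiplied by a $W$-dependent element of $G_u$, which together yield an explicit inverse. The paper leaves the construction of that inverse as an ``easy exercise''; your write-up simply fills in that exercise.
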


\section{Proofs of Main Results}\label{s:proofs}

\subsection{Proof of Theorem \ref{t:equiv}}

\begin{prop}\label{p:ABCD}
Let $g\in G$, and $X,A_0,B_0,C,D\in\m$.  The following are equivalent:
\begin{enumerate}
\item $\left.\frac d{dt}\right|_0\Ad_g \Ad_{\exp tC}\Ad_{\exp (X+itD)}\Upsilon
= \left.\frac d{dt}\right|_0  \Ad_g\Ad_{\exp tiA_0}\Ad_{\exp(X-tB_0)}\Upsilon$
\item $S_XC+iE_XD=iS_XA_0-E_XB_0$
\end{enumerate}\end{prop}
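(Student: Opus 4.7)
The plan is to reduce both sides of (1) to algebraic expressions in $\m$, using the differential of the exponential map and the fact that $\Upsilon$ is central in $\k$. Since $\Ad_g$ is injective, I can drop it immediately, so only the two derivatives in $\g$ need to be compared.

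For the first curve, I would use the standard first-order expansion $\Ad_{\exp(X+itD)} \sim \Ad_{\exp X}\circ \Ad_{\exp(tE_{X,l}(iD))}$ and then apply $\Ad_{\exp tC}$. Differentiating at $t=0$ yields
\[
\bigl[C,\Ad_{\exp X}\Upsilon\bigr] + i\,\Ad_{\exp X}\,\ad_{E_{X,l}D}\Upsilon .
\]
The analogous computation for the second curve gives
\[
i\bigl[A_0,\Ad_{\exp X}\Upsilon\bigr] - \Ad_{\exp X}\,\ad_{E_{X,l}B_0}\Upsilon .
\]
Now $E_{X,l} = E_X - F_X$ from \S\ref{ss:EZFZ}, and since $F_X$ sends $\m$ to $\k$ while $\Upsilon$ is central in $\k$, the terms with $F_X$ vanish. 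Using $\ad_\Upsilon = J$ on $\m$, I rewrite $\ad_{E_X Y}\Upsilon = -J E_X Y$ for $Y\in\{D,B_0\}$.

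Next I simplify the bracket $[C,\Ad_{\exp X}\Upsilon]$ by conjugating: it equals $\Ad_{\exp X}\bigl[\Ad_{\exp(-X)}C,\Upsilon\bigr]$. By Lemma \ref{lemma:oddpart}, $\Ad_{\exp(-X)}C = S_X C - \ad_X E_X C$, where $\ad_X E_X C \in \k$. Centrality of $\Upsilon$ in $\k$ kills the odd term, so
\[
[C,\Ad_{\exp X}\Upsilon] \;=\; -\Ad_{\exp X}\,J S_X C ,
\]
and similarly $[A_0,\Ad_{\exp X}\Upsilon] = -\Ad_{\exp X} J S_X A_0$.

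Substituting these simplifications into the two derivatives, the equation (1) becomes
\[
\Ad_{\exp X}\bigl(-J S_X C - i J E_X D\bigr) \;=\; \Ad_{\exp X}\bigl(-i J S_X A_0 + J E_X B_0\bigr) .
\]
Injectivity of $\Ad_{\exp X}$ and of $J$ on $\m$ (with eigenvalues $\pm i$) then yields $S_X C + i E_X D = i S_X A_0 - E_X B_0$, which is (2); each step is reversible, giving the equivalence. The only mildly delicate point is keeping track of which parts of $E_{X,l}$ and $\Ad_{\exp(-X)}C$ land in $\k$ versus $\m$, since this is what makes all the ``bad'' terms disappear via centrality of $\Upsilon$ in $\k$; once that bookkeeping is organized the rest is immediate.
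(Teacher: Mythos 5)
Your proof is correct and follows essentially the same route as the paper's: both rest on the first-order expansion of $\exp(X+tY)$ via $E_{X,l/r}$, the splitting of $\Ad_{\exp(\pm X)}$ into its even part $S_X$ and an odd part landing in $\k$, and the fact that $\k$ centralizes $\Upsilon$. The only difference is presentational — you compute the two tangent vectors explicitly in $\g$ and cancel $\Ad_{\exp X}$ and $J$, whereas the paper compares the exponents of the two curves modulo $\k$ — and both reductions are valid.
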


\begin{proof}
$$\begin{aligned}\label{*}
1.&\Leftrightarrow \Ad_{\exp (X+itD)}\Upsilon\sim\Ad_{\exp t(iA_0-C)}\Ad_{\exp (X-tB_0)}\Upsilon\\
&\Leftrightarrow \Ad_{\exp X}\Ad_{\exp t\,E_{X,r}(iD)}\Upsilon\sim\Ad_{\exp t(iA_0-C)}\Ad_{\exp X}\Ad_{\exp t\,E_{X,r}(-B_0)}\Upsilon\\
&\Leftrightarrow \Ad_{\exp X}\Ad_{\exp t\,E_{X,r}(iD)}\Upsilon\sim\Ad_{\exp X}\Ad_{\exp t(\Ad_{\exp -X}(iA_0-C)-E_{X,r}(B_0))}\Upsilon\\
&\Leftrightarrow E_{X,r}(iD)\equiv \Ad_{\exp -X}(iA_0-C)-E_{X,r}(B_0)\quad\text{mod $\k$}\\
&\Leftrightarrow E_X(iD)=S_X(iA_0-C)-E_X(B_0)\Leftrightarrow 2.
\end{aligned}$$
\end{proof}

We proceed to the proof of Theorem \ref{t:equiv}.  Let $g_0\in G_u$, $Z\in \m_0$, and $A,B\in \m_u$.  We compute:

$$\begin{aligned}
(A,B)\hat{}_{g_u,Z}&=\left.\frac d{dt}\right|_0\Ad_{g_u}\Ad_{\exp tA}\Ad_{\exp (Z+itB)}\Upsilon\\&\underset{dT_{\lambda}}\mapsto
\left.\frac d{dt}\right|_0\Ad_{g_u}\Ad_{\exp tA}\Ad_{\exp (t_\lambda Z+tit_{\lambda}B)}\Upsilon\\
&\underset{J_1}\mapsto\left.\frac d{dt}\right|_0
\Ad_{g_u}\Ad_{\exp tiA}\Ad_{\exp (t_\lambda Z-tt_\lambda B)}\Upsilon\\
\end{aligned}$$
\noindent We  apply Proposition \ref{p:ABCD} with $X=t_\lambda Z$, $A_0=A$, and $B_0=t_\lambda B$;  this determines $C,D$ by the equation
$S_{t_\lambda Z}C+iE_{t_\lambda Z}D=iS_{t_\lambda Z}A-E_{t_\lambda Z}t_\lambda B$ with  the additional assumption that 
$C$ and $t_\lambda^{-1}D$ are in $\m_u$.  With these assumptions, the displayed calculation above continues as
$$\begin{aligned}
&=\left.\frac d{dt}\right|_0 \Ad_{g_u}\Ad_{\exp tC}\Ad_{\exp t_\lambda Z+itt_\lambda(t_{\lambda}^{-1}D)}\Upsilon\\
&\underset{dT_{\lambda}^{-1}}\mapsto \left.\frac d{dt}\right|_0 \Ad_{g_u}\Ad_{\exp tC}\Ad_{\exp  Z+itD}\Upsilon\\
&=(C,t_\lambda^{-1}D)\hat{}_{g_u,Z}
\end{aligned}$$

Comparing with Definition \ref{d:stereo}, we see that the proof of Theorem \ref{t:equiv} is complete after we have verified the following proposition:

\begin{prop}\label{p:ABCD2}
With notation as above, 
$$\begin{aligned}
C&=\lambda_1(-S_Z^{-1}E_ZB)+\lambda_2(-JS_Z^{-1}E_ZB)+\lambda_3JA\\
t_{\lambda}^{-1}D&=\lambda_1(E_Z^{-1}S_ZA)+\lambda_2(-E_Z^{-1}S_ZJA)+\lambda_3(-E_Z^{-1}S_ZJS_Z^{-1}E_ZB)
\end{aligned}$$
\end{prop}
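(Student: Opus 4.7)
The plan is direct substitution. Setting $\widehat D := t_\lambda^{-1}D$, two things must be checked: the reality conditions $C, \widehat D \in \m_u$, and the equation
\begin{equation*}
S_{t_\lambda Z}C + iE_{t_\lambda Z}(t_\lambda \widehat D) = iS_{t_\lambda Z}A - E_{t_\lambda Z}\,t_\lambda B.
\end{equation*}
Reality is immediate: $J = \ad_\Upsilon$ preserves $\m_u$ (as $\Upsilon\in\k_0$), and $E_Z, S_Z$ preserve $\m_u$ because they are real power series in $\ad_Z^2$ with $Z\in\m_0$; since the $\lambda_i$ are real, every summand in the formulas for $C$ and $\widehat D$ lies in $\m_u$.

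To handle the main equation, first strip off the outer $t_\lambda$. By Lemma \ref{l:t}(5), $S_{t_\lambda Z} = t_\lambda S_Z t_{1/\lambda}$ and $E_{t_\lambda Z} = t_\lambda E_Z t_{1/\lambda}$; applying $t_{1/\lambda}$ to both sides collapses the equation to
\begin{equation*}
S_Z(t_{1/\lambda}C) + iE_Z \widehat D = iS_Z(t_{1/\lambda}A) - E_Z B.
\end{equation*}
The remaining scalar bookkeeping is governed by two identities of operators on $\m$:
\begin{align*}
\text{(a)}\ \ &\lambda_3 J\,t_{1/\lambda} + i(\lambda_1 - \lambda_2 J) = i\,t_{1/\lambda},\\
\text{(b)}\ \ &(\lambda_1 + \lambda_2 J)\,t_{1/\lambda} + i\lambda_3 J = I.
\end{align*}
Identity (a) is Lemma \ref{l:t}(6) multiplied by $i$ and rearranged, using that $t_{1/\lambda}$ commutes with $J$ (both are diagonal on $\m^+\oplus\m^-$). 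For (b), multiply Lemma \ref{l:t}(6) on the right by $(\lambda_1+\lambda_2 J)$: since $\lambda_1^2 + \lambda_2^2 = 1 - \lambda_3^2$, the common $(I+i\lambda_3 J)$ factor cancels to yield $(\lambda_1 + \lambda_2 J) = (I - i\lambda_3 J)t_\lambda$, and right-multiplying by $t_{1/\lambda}$ gives (b).

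Substituting and splitting by linearity in $(A,B)$, the $A$-part of the reduced left-hand side becomes
\begin{equation*}
\lambda_3 S_Z t_{1/\lambda}(JA) + iE_Z\bigl(E_Z^{-1}S_Z(\lambda_1 - \lambda_2 J)A\bigr) = S_Z\bigl[\lambda_3 J\,t_{1/\lambda} + i(\lambda_1 - \lambda_2 J)\bigr]A = iS_Z\,t_{1/\lambda}A
\end{equation*}
by identity (a), matching the $A$-part of the right-hand side. The $B$-part becomes
\begin{equation*}
-S_Z(\lambda_1 + \lambda_2 J)t_{1/\lambda}S_Z^{-1}E_Z B - i\lambda_3\,S_Z J\,S_Z^{-1}E_Z B = -S_Z\bigl[(\lambda_1+\lambda_2 J)\,t_{1/\lambda} + i\lambda_3 J\bigr]S_Z^{-1}E_Z B,
\end{equation*}
which by (b) equals $-S_Z\cdot I\cdot S_Z^{-1}E_Z B = -E_Z B$, the $B$-part of the right-hand side.

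The main obstacle is that $J$ does not commute with $S_Z$ or $E_Z$, so $J$ cannot be pulled through these operators. The reduction succeeds because, after the $t_\lambda$-conjugation, $S_Z$ and $E_Z$ appear only as outer sandwiches enclosing a polynomial in $J$ and $t_{1/\lambda}$ — operators that \emph{do} commute — and identities (a)--(b) collapse the interior to $i\,t_{1/\lambda}$ or $I$, through which the outer $S_Z, S_Z^{-1}$ trivially pass.
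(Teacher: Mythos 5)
Your proposal is correct and follows essentially the same route as the paper: verify the reality conditions, substitute into the defining equation from Proposition \ref{p:ABCD}, separate the $A$- and $B$-parts, conjugate the power-series operators by $t_\lambda$ via Lemma \ref{l:t}(5), and reduce everything to two operator identities that are restatements of Lemma \ref{l:t}(6) (your (a) and (b) are exactly the paper's two displayed identities, rearranged). The only cosmetic differences are that you strip the outer $t_\lambda$ at the start and derive (b) algebraically from (6) rather than by a direct eigenvalue check on $\m^\pm$.
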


\begin{proof}
So defined, we would have that $C$ and $t_{\lambda}^{-1}D$ are in $\m_u$, so the only question is whether the putative formulas for $C$ and $D$
satisfy the equation $S_{t_\lambda Z}C+iE_{t_\lambda Z}D=iS_{t_\lambda Z}A-E_{t_\lambda Z}t_\lambda B$.  Substituting the putative formulas for $C$ and $D$ into the equation and separating the conditions on $A$ and $B$, it is sufficient to verify that 
$$\begin{aligned}
i t_{\lambda}S_{t_{\lambda}Z}(A)&=i\left(\lambda_1E_{t_{\lambda}Z}t_\lambda E_Z^{-1}S_Z-\lambda_2 E_{t_{\lambda}Z}t_{\lambda}E_Z^{-1}S_ZJ-i\lambda_3 S_{t_{\lambda}Z}J\right)A\\
-E_{t_{\lambda}Z}t_\lambda B &=-\left( \lambda_1 S_{t_{\lambda}Z}S_Z^{-1}E_Z+\lambda_2 S_{t_{\lambda}Z}JS_Z^{-1}E_ZB+i\lambda_3 E_{t_{\lambda}Z}t_{\lambda}E_Z^{-1}S_ZJS_Z^{-1}E_Z \right)B
\end{aligned}$$
and after applying Lemma \ref{l:t}(4) and simplifying, this reduces to 
$$\begin{aligned}t_{\lambda}^{-1}&=\lambda_1-\lambda_2J-i\lambda_3 t_{\lambda}^{-1}J\\
I&=\lambda_1t_{\lambda}^{-1}+\lambda_2 t_{\lambda}^{-1}J+i\lambda_3 J
\end{aligned}$$
The first of these is  equivalent to the equation that was verified in Lemma \ref{l:t}(6).  The second is similar, but with the substitution 
$\lambda\mapsto 1/\lambda$, $\lambda_1\mapsto \lambda_1$, $\lambda_2\mapsto -\lambda_2$, $\lambda_3\mapsto -\lambda_3$.  
\end{proof}

\subsection{Proof of Corollary \ref{cor:J1}}

Part 1 is immediate from Theorem \ref{t:equiv}, which gives Part 3.   Part 2 is a consequence of the fact that $T_{\lambda}$ is $G_u$-equivariant (Definition \ref{d:T}). \qed

\subsection{Proof of Theorem \ref{t:holsymplintertwine}}
We use the ``hat'' tangent vectors, with $g_u\in G_u$ and $Z\in\m_0$ fixed, and with $A,B\in\m_u$.  We calculate $dT_\lambda$:
$$\begin{aligned}
(A,B){\hat{}}_{g_u,Z}&\underset{dT_{\lambda}}\longmapsto \left.\frac d{dt}\right|_0 \Ad_{g_u}\Ad_{\exp(tA)}\Ad_{\exp(t_\lambda Z+itt_{\lambda}B)}\Upsilon\\
&=\left.\frac d{dt}\right|_0  \Ad_{g_u}\Ad_{\exp(tA)}\Ad_{\exp(t_\lambda Z)}\Ad_{\exp(itE_{t_\lambda Z}(t_\lambda B))}\Upsilon   \\
&=\left.\frac d{dt}\right|_0 \Ad_{g_u}\Ad_{\exp(t_\lambda Z)}\Ad_{\exp(t\Ad_{\exp(-t_\lambda Z)}(A))} \Ad_{\exp(itE_{t_\lambda Z}(t_\lambda B))}\Upsilon  \\
&=\left.\frac d{dt}\right|_0  \Ad_{g_u}\Ad_{\exp(t_\lambda Z)}\Ad_{\exp(
t(\Ad_{\exp(-t_\lambda Z)}(A)+itE_{t_\lambda Z}(t_\lambda B))
)}\Upsilon  \\
\end{aligned}$$
Then, from the definition of $\omega_2+i\omega_3$ as $-i$ times the KKS form, we have, on the one hand, that 
$$\begin{aligned}
(\omega_2+i\omega_3)&\left(dT_{\lambda}(A_1,B_1)\hat{},dT_{\lambda}(A_2,B_2)\hat{}\right)\\
&=-i\kappa\left(\Upsilon,[\Ad_{\exp(-t_\lambda Z)}A_1+iE_{t_\lambda Z}t_\lambda B_1,\Ad_{\exp(-t_\lambda Z)}A_2+iE_{t_\lambda Z}t_\lambda B_2]\right)\\
&=-i\kappa\left([\Upsilon,\Ad_{\exp(-t_\lambda Z)}A_1+iE_{t_\lambda Z}t_\lambda B_1],\Ad_{\exp(-t_\lambda Z)}A_2+iE_{t_\lambda Z}t_\lambda B_2\right)\\
&=-i\kappa\left(JS_{t_\lambda Z}A_1+iJE_{t_\lambda Z}t_\lambda B_1,S_{t_\lambda Z}A_2+iE_{t_\lambda Z}t_\lambda B_2\right)\\
&= -i\kappa\left(JS_{t_\lambda Z}A_1+it_\lambda JE_{Z} B_1,S_{t_\lambda Z}A_2+it_\lambda E_{Z}B_2\right)  \qquad{\text{(Lemma \ref{l:t}(5)})} \\
&=-i\kappa(JS_{t_\lambda Z}A_1, S_{t_\lambda Z}A_2)  +i\kappa(t_\lambda JE_ZB_1,t_\lambda E_ZB_2)\\
     &\qquad+\kappa(t_\lambda JE_ZB_1, S_{t_\lambda Z}A_2)+\kappa(JS_{t_\lambda Z}A_1,t_\lambda E_ZB_2)\\
&=  -i\kappa(JS_{t_\lambda Z}A_1, S_{t_\lambda Z}A_2)  +i\kappa(t_\lambda JE_ZB_1,t_\lambda E_ZB_2)\\
  &\qquad+\kappa(t_\lambda JE_ZB_1, t_\lambda S_{Z}t_{\lambda}^{-1}A_2)+\kappa(Jt_\lambda S_{Z}t_\lambda^{-1}A_1,t_\lambda E_ZB_2)\\
 &=  -i\kappa(JS_{t_\lambda Z}A_1, S_{t_\lambda Z}A_2)  +i\kappa(JE_ZB_1,E_ZB_2)\\
  &\qquad-\kappa(E_ZB_1,  JS_{Z}t_{\lambda}^{-1}A_2)+\kappa(JS_{Z}t_\lambda^{-1}A_1, E_ZB_2),\\
\end{aligned}$$
 the last equality via Lemmas \ref{lemma:killingform}(5) and \ref{lemma:killingform}(2).  
On the other hand, we can use Proposition \ref{prop:metricinhat} to compute that 
$\frac i2\left(\lambda-\lambda^{-1}\right)\omega_1+\frac 12\left(\lambda+\lambda^{-1}\right)\omega_2+i\omega_3$ evaluates on our pair of ``hat'' vectors to 
$$\begin{aligned}\frac i2(\lambda-\lambda^{-1})\left(-\kappa(A_1,S_{JZ}E_ZB_2)\right.&\left.+\kappa(S_{JZ}E_ZB_1,A_2)\right)\\
&+\frac 12(\lambda+\lambda^{-1})\left(\kappa(JS_ZA_1,E_ZB_2)-\kappa(E_ZB_1,JS_ZA_2)\right)\\
&+i\left(\kappa(JE_ZB_1,E_ZB_2)-\kappa(JS_ZA_1,S_ZA_2)\right)\end{aligned}$$
After collecting terms and using Lemmas \ref{lemma:order2}(2) and \ref{lemma:killingform}(3) and the projection formulas in Lemma \ref{lemma:projectionlemma}(2,3), the expression simplifies to 
$$\begin{aligned}
-i\kappa(JS_ZA_1,S_ZA_2)&+i\kappa(JE_ZB_1,E_ZB_2)\\
&+\kappa(JS_Z\circ(\lambda\,\proj{\m}{\m^-}+\lambda^{-1}\,\proj{\m}{\m^+})A_1,E_ZB_2)\\
&+\kappa(E_ZB_1, JS_Z(-\lambda\,\proj{\m}{\m^-}-\lambda^{-1}\proj{\m}{\m^+})A_2)
\end{aligned}$$
To prove Theorem \ref{t:holsymplintertwine}, we compare like terms. 
It is immediate that the corresponding $B_1,B_2$ terms are equal. 
The corresponding $A_1,B_2$ and $B_1,A_2$ terms are equal, simply by interpreting $t_{\lambda}^{-1}$ as $\lambda\,\proj{\m}{\m^-}+\lambda^{-1}\,\proj{\m}{\m^+}$.    For the $A_1,A_2$ terms, we must show that $\kappa(JS_{t_\lambda Z}A_1,S_{t_\lambda Z}A_2)=\kappa(JS_ZA_1,S_ZA_2)$.  Using Lemmas \ref{lemma:order2}(2) and \ref{lemma:killingform}(3), this is equivalent to $\kappa(JS_{Jt_{\lambda}Z}S_{t_{\lambda}Z}A_1,A_2)=\kappa(JS_{JZ}S_ZA_1A_2)$.  By Lemma \ref{lemma:projectioncommutes}, the operators 
$S_{Jt_{\lambda}Z}S_{t_{\lambda}Z}$ and $S_{JZ}S_Z$ map $\m^+$ to $\m^+$ and $\m^-$ to $\m^-$; then by Lemma \ref{l:t}(5) we have that $S_{Jt_\lambda Z}S_{t_\lambda Z}=t_\lambda S_{JZ}S_Z t_\lambda^{-1}$.  
Using that $\kappa$ restricted to either $\m^+$ or $\m^-$ is zero,  we have that 
$$\begin{aligned}\kappa(JS_{Jt_{\lambda}Z}S_{t_{\lambda}Z}A_1,A_2)&=\kappa(JS_{Jt_{\lambda}Z}S_{t_{\lambda}Z}A_1^+,A_2^-)
+\kappa(JS_{Jt_{\lambda}Z}S_{t_{\lambda}Z}A_1^-,A_2^+)\\
&=\kappa(Jt_\lambda S_{JZ}S_Zt_\lambda^{-1}A_1^+,A_2^-)+\kappa(Jt_\lambda S_{JZ}S_Zt_\lambda^{-1}A_1^-,A_2^+)\\
&=\kappa(i\lambda S_{JZ}S_Z \lambda^{-1}A_1^+,A_2^-)+\kappa(-i\lambda^{-1}S_{JZ}S_Z\lambda A_1^-,A_2^+)\\
&=\kappa(iS_{JZ}S_ZA_1^+,A_2^-)+\kappa(-iS_{JZ}S_ZA_1^-,A_2^+)\end{aligned}$$
Since this is independent of the variable $\lambda$, it coincides with the value when $\lambda=1$, which is $\kappa(JS_{JZ}S_ZA_1,A_2)$.  \qed

We record a result of our computation:
\begin{prop}\label{prop:holsympcalc}
$$\begin{aligned}(\omega_2+i\omega_3)\left(dT_{\lambda}(A_1,B_1)\hat{},dT_{\lambda}(A_2,B_2)\hat{}\right)&=
-i\kappa(JS_ZA_1,S_ZA_2)+i\kappa(JE_ZB_1,E_ZB_2)\\
&+\kappa(JS_Zt_\lambda^{-1}A_1,E_ZB_2)
-\kappa(E_ZB_1, JS_Zt_\lambda^{-1}A_2)\qed
\end{aligned}
$$
\end{prop}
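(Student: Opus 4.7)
The plan is to recognize that this proposition is essentially an intermediate bookkeeping step distilled from the proof of Theorem~\ref{t:holsymplintertwine}, and that the task is really to extract and organize that calculation. So the approach is a direct computation: push the hat tangent vectors forward through $dT_\lambda$, expand using $\omega_2+i\omega_3=-i(\text{KKS})$, and simplify the resulting bracket of Lie algebra elements with the Killing form identities.

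First, I would reuse the preliminary computation already carried out for Theorem~\ref{t:holsymplintertwine}: applying $dT_\lambda$ to $(A_i,B_i)\hat{}_{g_u,Z}$ and using $E_{t_\lambda Z,l}=E_{t_\lambda Z}-F_{t_\lambda Z}$ together with $F_{t_\lambda Z}(t_\lambda B_i)\in\k$ (so that it contributes nothing modulo $\k$ when paired with $\Upsilon$), one obtains
$$dT_\lambda(A_i,B_i)\hat{}=\left.\tfrac{d}{dt}\right|_0\Ad_{g_u}\Ad_{\exp(t_\lambda Z)}\Ad_{\exp t(\Ad_{\exp(-t_\lambda Z)}A_i+itE_{t_\lambda Z}(t_\lambda B_i))}\Upsilon.$$
Second, using the KKS definition of $\omega_2+i\omega_3$ at the point $\Ad_{g_u}\Ad_{\exp(t_\lambda Z)}\Upsilon$, together with the associativity of $\kappa$ and $J=\ad_\Upsilon$ on $\m$ (Lemma~\ref{lemma:killingform}(1,2)), the pairing becomes
$$-i\kappa\bigl(JS_{t_\lambda Z}A_1+iJE_{t_\lambda Z}(t_\lambda B_1),\,S_{t_\lambda Z}A_2+iE_{t_\lambda Z}(t_\lambda B_2)\bigr),$$
where I have used that $\proj{\g}{\m}\circ\Ad_{\exp(-t_\lambda Z)}=S_{t_\lambda Z}$ on $\m$ by Lemma~\ref{lemma:SZ}(1). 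Expanding yields four terms, and the conversion $E_{t_\lambda Z}\circ t_\lambda=t_\lambda\circ E_Z$ and $S_{t_\lambda Z}\circ t_\lambda=t_\lambda\circ S_Z$ from Lemma~\ref{l:t}(5) rewrites each term with $t_\lambda$ on the outside.

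Third, the crucial simplification is to show that the pure-$A$ term is independent of $\lambda$, that is,
$$\kappa(JS_{t_\lambda Z}A_1,S_{t_\lambda Z}A_2)=\kappa(JS_ZA_1,S_ZA_2).$$
By Lemma~\ref{lemma:order2}(2) and Corollary~\ref{cor:order2again} the left-hand side equals $\kappa(JS_{Jt_\lambda Z}S_{t_\lambda Z}A_1,A_2)$; by Lemma~\ref{lemma:projectioncommutes} the operator $S_{Jt_\lambda Z}S_{t_\lambda Z}$ preserves the $\m^\pm$ decomposition, and by Lemma~\ref{l:t}(5) it equals $t_\lambda\circ S_{JZ}S_Z\circ t_{1/\lambda}$. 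Splitting $A_i=A_i^++A_i^-$ and using that $\kappa$ vanishes on $\m^+\times\m^+$ and on $\m^-\times\m^-$, the factors $\lambda$ and $\lambda^{-1}$ introduced by $t_\lambda$ cancel, producing the $\lambda=1$ expression. For the two mixed terms, the same $\m^\pm$ splitting together with the identification $t_\lambda^{-1}=\lambda\,\proj{\m}{\m^-}+\lambda^{-1}\proj{\m}{\m^+}$ reassembles them into $\kappa(JS_Zt_\lambda^{-1}A_1,E_ZB_2)-\kappa(E_ZB_1,JS_Zt_\lambda^{-1}A_2)$, while the pure-$B$ term transforms immediately via Lemma~\ref{lemma:killingform}(5) since $t_\lambda\kappa$-preserves $\m$.

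The main obstacle is the same one emphasized in the paper's introduction: $t_\lambda$ does not preserve the real form $\m_0$, so one cannot naively manipulate the operators $E_Z,S_Z$ after applying $t_\lambda$ without first decomposing tangent vectors into $\m^+$ and $\m^-$ components. All four simplifications above hinge on the fact that $S_{JZ}S_Z$ and its $t_\lambda$-conjugate preserve this decomposition (Lemma~\ref{lemma:projectioncommutes}), which lets the $\lambda$-factors be tracked separately on each summand and then combined. Once this resolution has been performed, matching the four resulting terms with the stated right-hand side is immediate.
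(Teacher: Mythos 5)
Your proposal is correct and follows essentially the same route as the paper: Proposition \ref{prop:holsympcalc} is stated there as a byproduct (``We record a result of our computation'') of the proof of Theorem \ref{t:holsymplintertwine}, and your steps --- pushing the hat vectors through $dT_\lambda$, expanding $-i$ times the KKS form, converting via Lemma \ref{l:t}(5) and Lemma \ref{lemma:killingform}, and using the $\m^{\pm}$-splitting with Lemma \ref{lemma:projectioncommutes} to show the $A_1,A_2$ term is $\lambda$-independent --- reproduce that computation exactly.
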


\subsection{Proof of Theorem \ref{t:J3}}
On the one hand, since $\omega_3$ is the imaginary part of $\omega_2+i\omega_3$, we can apply Proposition \ref{prop:holsympcalc}, with help from Lemma \ref{l:t}(7), to conclude that 
$$\begin{aligned}(\omega_3)\left(dT_{\nu}(A_1,B_1)\hat{},dT_{\nu}(A_2,B_2)\hat{}\right)&=
-\kappa(JS_ZA_1,S_ZA_2)+\kappa(JE_ZB_1,E_ZB_2)\\
&+\kappa(JS_Z  \circ \frac{t_\nu^{-1}-t_{\overline\nu}}{2i}(A_1),E_ZB_2)\\
&-\kappa(E_ZB_1, JS_Z \circ  \frac{t_\nu^{-1}-t_{\overline\nu}}{2i}   (A_2))
\end{aligned}
$$
On the other hand, $\omega_{(J)}=\lambda_1\omega_1+\lambda_2\omega_2+\lambda_3\omega_3$, so applying Proposition \ref{prop:metricinhat}, we have that 
$$
\begin{aligned}\omega_{(J)}\left((A_1,B_1)\hat{},(A_2,B_2)\hat{}\right)&=\lambda_1\left(-\kappa(A_1,S_{JZ}E_ZB_2)+\kappa(S_{JZ}E_ZB_1,A_2)\right)\\
&+\lambda_2\left(\kappa(JS_ZA_1,E_ZB_2)-\kappa(E_ZB_1,JS_ZA_2)\right)\\&+\lambda_3\left(\kappa(JE_ZB_1,E_ZB_2)-\kappa(JS_ZA_1,S_ZA_2)\right)
\end{aligned}$$
which, by familiar techniques, simplifies to 
$$\begin{aligned}
-\lambda_3&\kappa(JS_ZA_1,S_ZA_2)+\lambda_3\kappa(JE_ZB_1,B_2)\\&+\kappa(JS_Z\circ(\lambda_1J+\lambda_2)A_1,E_ZB_2)
-\kappa(E_ZB_1,JS_Z\circ(\lambda_1J+\lambda_2)A_2)
\end{aligned}$$
We have immediately the desired result with regard to the $A_1,A_2$ and $B_1,B_2$ terms.  To finish the proof, we need
${t_\nu^{-1}-t_{\overline\nu}} =2i \frac{\lambda_1J+\lambda_2}{\lambda_3}$ as operators on $\m$.  This is easily verified;  both operators are multiplication by  $\frac{-4\overline\lambda}{1-|\lambda|^2}$  on $\m^+$ and multiplication by  $\frac{4\lambda}{1-|\lambda|^2}$   on $\m^-$.  
\qed

\subsection{Proof of Corollary \ref{c:J3}}
First, the putative formula for $\mu_{(\lambda)}$ is $G_u$-equivariant since $T_{\nu}$ and $\mu_3$ both are $G_u$-equivariant.  

Second, we must verify the key formula for moment maps, as stated in Definition \ref{defn:momentmap}.  We let $p\in\O$, let $\chi_p$ be a vector tangent to $\O$ at $p$, let $X\in\g_u$, and let $(\eta_X)_p=\left.\frac{d}{dt}\right|_0 \Ad_{\exp(tX)}p$.  We must show
\begin{equation}
\frac{1-|\lambda|^2}{1+|\lambda|^2}\,\kappa\left(d(\mu_3\circ T_{\nu})(\chi_p),X\right)=\omega_{(\lambda)}\left(\chi_p,\left(\eta_X\right)_p\right)
\label{eq:moment1}
\end{equation}
Since $\omega_{(\lambda)}=\dfrac{1-|\lambda|^2}{1+|\lambda|^2}\,dT_{\nu}^*(\omega_3)$ by Theorem \ref{t:J3}, we see that Equation \ref{eq:moment1} is equivalent to 
\begin{equation}
\kappa\left(d(\mu_3 \circ T_{\nu})(\chi_p),X\right)=dT_{\nu}^*\omega_3\left(\chi_p,(\eta_X)_p\right)
\label{eq:moment2}
\end{equation}
The left side of (\ref{eq:moment2}) is $\kappa\left(d\mu_3(dT_{\nu}(\chi_p)),X\right)$.  The right side of (\ref{eq:moment2}) is 
$\omega_3\left(dT_{\nu}(\chi_p), dT_{\nu}(\eta_X)_p\right)$.  However, $dT_{\nu}(\eta_X)_p=\left(\eta_X\right)_{T_{\nu}(p)}$.  We see this since, on the one hand 
\begin{equation}  dT_{\nu}(\eta_X)_p=\left.\frac{d}{dt}\right|_0T_{\nu}\left(\Ad_{\exp(tX)} p\right),
\end{equation}
and on the other hand, 
\begin{equation}
\left(\eta_X\right)_{T_{\nu}(p)}=\left.\frac{d}{dt}\right|_0\Ad_{\exp(tX)}\left(T_\nu(p)\right);
\end{equation}
these are equal since $T_{\nu}$ is  $G_u$-equivariant.  
With these substitutions, Equation \ref{eq:moment2} becomes
$$\kappa\left(d\mu_3(dT_{\nu}(\chi_p)),X\right)
=\omega_3\left(dT_{\nu}(\chi_p),\left(\eta_X\right)_{T_{\nu}(p)}\right)$$ which is valid since $\mu_3$ is the moment map relative to $\omega_3$.  
\qed

\subsection{Proof of Theorem \ref{thm:Tlambdaformula}}
We begin with $\mathcal O$.  By strong orthogonality, the proof reduces to a calculation in $SL(2)$, namely, 
\begin{equation}\label{eq:Tlambda}
\Ad_{\exp\left(\begin{matrix}0&c\lambda\\c/\lambda&0\end{matrix}\right)}\left(\begin{matrix}1&0\\0&-1\end{matrix}\right)=
\Ad_{\exp\left(\begin{matrix}0&-su^2\\s\overline u^2&0\end{matrix}\right)}\Ad_{\exp\left(\begin{matrix}0&ru^2\\r\overline u^2&0\end{matrix}\right)}\left(\begin{matrix}1&0\\0&-1\end{matrix}\right)
\end{equation} 
where $r$ and $s$ are as in the statement of the theorem.  

On the one hand, by Lemma \ref{lemma:sl2}, the left side of Equation \ref{eq:Tlambda} is $\left(\begin{matrix}\cosh 2c&-\lambda\sinh 2c\\\frac{\sinh 2c}{\lambda}&-\cosh 2c\end{matrix}\right)$.  Using Lemma \ref{lemma:sl2}, matrix multiplication, and trigonometric identities, one can show that the right side of Equation \ref{eq:Tlambda} is 
$$\left(\begin{matrix} \cosh(2r)\cos(2s)&u^2\cosh(2r)\sin(2s)-u^2\sinh(2r)\\\overline u^2\cosh(2r)\sin(2s)+\overline u^2\sinh(2r)
&-\cosh(2r)\cos(2s)\end{matrix}\right)$$
We are finished if we can verify the equations
\begin{itemize}
\item[(a)]  $\cosh(2c)=\cosh(2r)\cos(2s)$
\item[(b)]  $\left|\lambda\right|\sinh(2c)=\sinh(2r)-\cosh(2r)\sin(2s)$
\item[(c)]  $\frac{\sinh(2c)}{\left|\lambda\right|}=\sinh(2r)+\cosh(2r)\sin(2s)$
\end{itemize}
which is to say
\begin{itemize}
\item [(a)]  $\cosh(2c)=\cosh(2r)\cos(2s)$
\item[(b)]  $\frac 12\left(\frac 1{\left|\lambda\right|}+\left|\lambda\right|\right)\sinh(2c)=\sinh(2r)$
\item[(c)]  $\frac 12\left(\frac 1{\left|\lambda\right|}-\left|\lambda\right|\right)\sinh(2c)=\cosh(2r)\sin(2s)$
\end{itemize}
Equation (b) follows immediately from the definition of $r$.  Dividing the corresponding sides of (c) and (a), one obtains the equation 
$\frac 12\left(\frac 1{\left|\lambda\right|}-\left|\lambda\right|\right)\tanh(2c)=\tan(2s)$, which is true by the definition of $s$.  We conclude by verifying Equation (a).  Note that since $\cosh$ takes on positive values and since $\cos(2s)$ is positive by the definition of $s$,  it is sufficient to verify that (a) $\cosh^2(2c)=\cosh^2(2r)\cos^2(2s)$.  We can write $\cosh^2(2r)$ as $1+\sinh^2(2r)$ and $\cos^2(2s)$ as $\frac{1}{1+\tan^2(2s)}$ and then use the definitions of $r$ and $s$ to simplify $\cosh^2(2r)\cos^2(2s)$; after algebraic manipulation and the use of the Pythagorean trigonometric identities, this reduces to $\cosh^2(2c)$.  This concludes the proof of the first part of Theorem \ref{thm:Tlambdaformula}.  

In the second part of Theorem \ref{thm:Tlambdaformula}, we observe that by the formula in Section \ref{ss:P}, $\Phi^{-1}[g_u,W]
=\Ad_{g_u}\Ad_{\exp(\sum c_\psi x_\psi)}\Upsilon$, where $c_\psi=\frac 12\text{\rm arcsinh}(-2d_\psi)$.  This is sent by $T_\lambda$ to $\Ad_{g_u'}\Ad_{\exp Z'}\Upsilon$, where $g_u'$ and $Z'$ are as in Part 1 of the theorem.  We consider these in turn.  First, 
in the formula for $g_u'$, we note that 
$$\begin{aligned}
s_\psi&=\frac 12\arctan\left(\frac 12\left(\frac 1{\left|\lambda\right|}-\left|\lambda\right|\right)\tanh(2c_\psi)\right)\\
&=\frac 12\arctan\left(\frac 12\left(\frac 1{\left|\lambda\right|}-\left|\lambda\right|\right)\tanh(\text{\rm arcsinh}(-2d_\psi))\right)\\
&=\frac 12\arctan\left(\left(\frac 1{\left|\lambda\right|}-\left|\lambda\right|\right)\frac{-d_\psi}{1+4d_{\psi}^2}\right)\\
\end{aligned}$$
using that $\tanh(\text{\rm arcsinh} z)=\frac z{\sqrt{1+z^2}}$.  Second, we know that $Z'=\sum_{\psi}r_\psi \cdot t_{u^2}(x_\psi)$; then note that in computing $\Phi\left(\Ad_{g_u'}\Ad_{\exp Z'}\Upsilon\right)$, in the second slot the entry $W'$ will be 
$$
\begin{aligned}-iP\left(\sum_{\psi}r_\psi \cdot t_{u^2}(x_\psi)\right)&=-i\cdot \frac 12   \sum_{\psi}\sinh(2r_\psi) \cdot t_{u^2}(x_\psi)\\
&=-\frac i2\sum_{\psi}-\left(\frac 1{\left|\lambda\right|}+\left|\lambda\right|\right)d_\psi\cdot t_{u^2}(x_\psi)\\
&=\frac i2\left(\frac 1{\left|\lambda\right|}+\left|\lambda\right|\right)\sum_{\psi}d_\psi\cdot t_{u^2}(x_\psi)\\
\end{aligned}$$
The formula $W'=\frac 12\left(t_{\lambda}+t_{1/\overline{\lambda}}\right)W$ follows immediately.  \qed

\subsection{Proof of Corollary \ref{cor:Tlambdabijection}}
This is easily seen in the setting $\twist{G_u}{K_0}{\m_u}$. The key observation is that $\frac 12\left(t_\lambda +t_{1/\overline\lambda}\right):\m_u\rightarrow\m_u$ is bijective. Note that by Theorem \ref{thm:Tlambdaformula}, 
$T_\lambda$ sends $[g_u,W]$ to $[g_u', \frac 12(t_\lambda +t_{1/\overline\lambda})W]$, where $g_u'$ equals $g_u$ times an element of $G_u$ that depends on $W$.  For this, it is an easy exercise to see that $T_\lambda$ is a bijection.        \qed

\section{Application:  a new view of an old action}
\label{s:example}
In this section, we apply the main results of \S\ref{s:deformations}, focusing on the action of $G_0$ on $\O$.

On the one hand, recall that for each $\lambda\in\C^*$, there is a holomorphic action of $G$ on $\O$, 
relative to the complex structure $J_{(\lambda)}$.  By Corollary \ref{cor:J1}, each of these actions is equivalent to the usual action of $G$ on $\O$ via the explicit intertwining map $T_{\lambda}$.  For $\lambda=0$ or $\infty$, the  
the holomorphic action degenerates to the usual holomorphic action of $G$ on $T^*\left(G/Q\right)$.  

On the other hand, as developed in \cite{HS}, \cite{HHS}, for any choice of $\lambda$, there is a   {\em moment-critical subset} of $\O$ (or equally well, of $T^*\left(G/Q\right)$), which arises via the composition
$$\O\overset{\mu_{(\lambda)}}\to\g_u\overset{\text{proj}}\to\m_u$$
These moment-critical sets are $K_0$-stable and often do a good job at representing the $G_0$-orbits.  By Corollary \ref{c:J3}, if $\lambda\neq 1$, then the moment-critical subset for $J_{(\lambda)}$ is obtained by pushing forward the moment-critical subset for $J_3$ using the explicit intertwining map $T_{\nu}$.

These two ideas come together in the following way.  We take the usual action (with complex structure $J_1$) of $G_0$ on $\O$ as the fundamental topic of interest.  The moment-critical subset represents the topologically closed $G_0$-orbits---a typical outcome for actions on an affine variety \cite{KN}.  However, both the $G_0$-orbit structure and moment-critical sets for the action of $G_0$ on $T^*\left(G/Q\right)$ (with complex structure $J_3$) are quite different than for $J_1$.  Then for intermediate complex structures $J_{(\lambda)}$ ($0<|\lambda|<1$), the $G_0$-orbit structure resembles that for $J_1$, and the moment-critical subsets resemble those for $J_3$.  The new window on the action of $G_0$ on $\O$ comes from considering these $J_{(\lambda)}$.  Specifically, we use $T_{\nu}$ to push forward the moment-critical subset for $J_3$, obtaining the moment-critical subset for $J_{(\lambda)}$.  Here, since the holomorphic action of $G_0$ is just a camouflaged version of the action on $\O$, we push forward the $J_{(\lambda)}$-moment-critical subset via $T_\lambda^{-1}$ to obtain a {\it new} moment-critical subset for the usual action of $G_0$ on $\O$.  The remarkable fact is that one obtains a quite different moment-critical subset than for $J_1$, that in particular, represents different $G_0$-orbits.  In particular, it represents at least some of the non-closed orbits.  

Below, we show how this works out for $SL(2)$.  We will borrow freely from some of the (complicated) results in \cite{Br}, where the reader can find justifications of some of the background outlined below. 

In this example, we have $G_u=SU(2)$, $G_0=SU(1,1)$, and $K_0=\left\{\left(\begin{matrix}e^{i\theta}&0\\0 &e^{-i\theta}\end{matrix}\right):\theta\in\R\right\}$.  The $K_0$-orbit of each point in $\O$ is represented by an element of the form
\begin{equation}\Ad_{\left(\begin{matrix}\cos s&-\sin s\\\sin s&\cos s\end{matrix}\right)\left(\begin{matrix}e^{ic}&0\\0&e^{-ic}\end{matrix}\right)\left(\begin{matrix}\cosh r&\sinh r\\\sinh r&\cosh r\end{matrix}\right)}\Upsilon  \tag{*}\end{equation}
which under the identification of $\O$ with $\twist GQ{\m^-}$, corresponds to the point
\begin{equation}\left[\left(\begin{matrix}\cos s&-\sin s\\\sin s&\cos s\end{matrix}\right),\dfrac 1{2i}\left(\begin{matrix}0&0\\e^{-2ic}\cdot \sinh 2r&0\end{matrix}\right)\right]\tag{**}\end{equation}
We regard $\twist GQ{\m^-}$ as the cotangent bundle to the 2-sphere, with the base being $G/Q\simeq S^2$ and the fiber over the base point being $\m^-\simeq \R^2$.  In the version $(*)$, $s$ determines latitude on the 2-sphere (with $s=0$ giving the north pole, $s=\pi/4$ giving a point on the equator, and $s=\pi/2$ giving the south pole).  The parameter $r$ reflects magnitude in the fiber, while a change of $\theta$ produces rotation in the fiber.  Under this correspondence, the north pole is obtained when $s=0=r$ and corresponds to $\Upsilon\in\O$, and the south pole is obtained when $s=\pi/2$ and $r=0$, and this corresponds to $-\Upsilon$.  More generally, the correspondence $(**)$ carries $\O_0$ (resp. $-\O_0$) to the cotangent plane at the north (resp. south) pole, and $\O_u$ to the base 2-sphere.  The $K_0$-conjugacy class of $\frac i2\left(\begin{matrix}0&1\\1&0\end{matrix}\right)$ gives the equator of the 2-sphere; in $(*)$, it is achieved by $s=\pi/4$ and $r=0$.  

Below, we discuss the $G_0$-orbit structure and moment-critical subsets for the complex structures $J_3$ and $J_1$, then display the push-forward of the $J_3$-moment-critical subsets by $T_\lambda^{-1}\circ T_\nu$. 

\subsection{The complex structure $J_3$}

\subsubsection{$G_0$-orbit structure}  We interpret  in the context of the cotangent bundle.  
The group $G_0$ acts on the base with three orbits:  the northern hemisphere,  the southern hemisphere, and the equator.  These are represented by $\Upsilon$, $-\Upsilon$, and the point as in $(*)$ with $s=\pi/4$ and $c,r=0$.  As homogeneous spaces, the first two are isomorphic to $G_0/K_0$ and the third is $G_0/B_0$, where $B_0$ consists of the real points of a split Borel subgroup.  The closure of either of the hemispheres contains the equator.  

Any other $G_0$-orbit in $\twist GQ{\m^-}$ projects under $\twist GQ{\m^-}\rightarrow G/Q$ to one of the three orbits above. 

There is a ray of orbits projecting to the upper hemisphere, each of them intersecting the cotangent space at the north pole in a single $K_0$-orbit (a circle).  Each orbit is represented by a point of the form $(*)$ with $s=0=c$, and a choice of $r>0$ that uniquely determines the orbit.  Each such orbit is closed and is isomorphic to $G_0/\{\pm I\}$ as a homogeneous space.  There is a similar situation for orbits over the southern hemisphere.  

Finally, there are orbits that project to the equator.  There is a circle of such orbits, each isomorphic to $G_0/U_0$, where $U_0$ is the group of real points of the unipotent radical of a split Borel subgroup, extended by $\pm I$.  Each orbit is represented by a point as in $(*)$ with $s=\pi/4$, $r=1$ (or any positive number), and a choice of $\theta\in[0,\pi)$ that uniquely determines the $G_0$-orbit.  Each of these orbits contains the equator in its closure.  

\subsubsection{The moment-critical subset}\label{sss:J3critical}
The $K_0$-orbits of the moment-critical subsets are represented by:

$\bullet$  Points $(*)$ with $s=0$, $c=0$, and $r\geq 0$.  The $K$-orbits of these points comprise the cotangent space at the north pole.  Thus, each $G_0$-orbit in $\twist GQ{\m^-}$ projecting to the upper hemisphere intersects the moment-critical subset in a single $K_0$-orbit, namely, the points in the orbit that lie above the north pole.  This applies both to the closed $G_0$-orbits ($r>0$), and the upper hemisphere itself ($r=0$).  
There are similar considerations for the southern hemisphere, using $s=\pi/2$. 

$\bullet$ The point $(*)$ with $s=\pi/4$ and $r=0=c$.  The $K_0$-orbit of this point is exactly the equator of the 2-sphere.  Thus the equator is a $G_0$-orbit but consists entirely of moment-critical points; the non-closed $G_0$-orbits projecting to the equator are not represented by moment-critical points.  

\subsection{The complex structure $J_1$}

\subsubsection{$G_0$-orbit structure}  \label{sss:J1orbits}
The set $\O$ contains one-parameter families of orbits isomorphic to $G_0/\{\pm I\}$---all but four of them topologically closed, along with two orbits isomorphic to $G_0/K_0$, and one isomorphic to $G_0/T_0$, where $T_0$ is the group of real points of a split torus.  To explain how these orbits fit together, it's useful to consider a $G_0$-invariant function on $\O$.  We have that $\O=\left\{\frac i2\left(\begin{matrix}A&B\\C&-A\end{matrix}\right):A,B,C\in\C\text{ and } A^2+BC=1\right\}$; the real-valued function $f=\dfrac{2|A|^2-|B|^2-|C|^2}2$ is $G_0$-invariant and has range $(-\infty,1]$.  

The exceptional $G_0$-orbits occur when $f=\pm 1$.  There are two orbits on which $f=1$, namely the orbits $\pm \O_0$ of $\pm\Upsilon$, respectively, which are closed in $\O$ and isomorphic to $G_0/K_0$.  Given a point 
$\frac i2\left(\begin{matrix}A&B\\C&-A\end{matrix}\right)$ at which $f=1$,  one can determine whether it is in the $G_0$-orbit of $\Upsilon$ or $-\Upsilon$ according to whether the real part of $A$ is positive or negative.  

There are five orbits on which $f=-1$.  One of them is topologically closed, namely, the orbit of $\frac i2\left(\begin{matrix}0&1\\1&0\end{matrix}\right)$, which is the orbit isomorphic to $G_0/T_0$.    Four orbits contain this orbit in their closures; they are isomorphic to $G_0/\{\pm I\}$.  They are represented  by $\frac i2\begin{pmatrix}1&2\\0&-1\end{pmatrix}$, $\frac i2\begin{pmatrix}-1&2\\0&1\end{pmatrix}$, $\frac i2\begin{pmatrix} 1&0\\2&-1\end{pmatrix}$, and $\frac i2\begin{pmatrix}-1&0\\2&1\end{pmatrix}$.   Given that $f=-1$, the closed orbit is determined by the conditions $Re(A)=0$ and $|B|=|C|$.  The other four orbits are separated by the signs of $Re(A)$ and $|B|-|C|$.  

There are two orbits for each value of $f$ with   $-1<f<1$, each topologically closed and isomorphic to 
$G_0/\{\pm I\}$.  The sign of $\Re A$ separates the two orbits.  An interesting choice of representatives occurs when $C=B$ and $A,B\in\R^*$.  We obtain
matrices $\frac i2\begin{pmatrix}A&B\\B&-A\end{pmatrix}$ with $A^2+B^2=1$, on which $f$ takes the value $2A^2-1\in(-1,1)$.  

Similarly, when $f<-1$, there are two $G_0$-orbits for each choice of the invariant $f$, each closed and isomorphic to $G_0/\{\pm 1\}$. The sign of $|B|-|C|$ separates the two orbits.   Interesting representatives occur with the choice $A=0$, $B>0$, $B\neq 1$.  We obtain matrices $\frac i2\left(\begin{matrix}0&B\\B^{-1}&0\end{matrix}\right)$, on which $f$ takes the value $-(B^2+B^{-2})/2\in(-\infty, -1)$.  

\subsubsection{The moment-critical subset}\label{sss:J1critical}

$\phantom{.}$ 

$\bullet$  One collection of critical points comes from points $(*)$ with $r=0$.  Viewed in $\twist GQ{\m^-}$, this constitutes the base $2$-sphere; viewed in $\O$, it is  $\O_u$.     The point $(*)$ is 
$$\Ad_{\left(\begin{matrix}\cos s&-\sin s\\\sin s&\cos s\end{matrix}\right)}\Upsilon=\frac i2\left(\begin{matrix}\cos 2s&\sin 2s\\\sin 2s&-\cos 2s\end{matrix}\right)$$  which maps under $f$ to $\cos^2 2s-\sin^2 2s=\cos 4s$.  
We obtain all matrices of this form by a choice of $s\in [0,\pi)$.  The choices $s=0$ and $s=\pi/2$ produce $\pm\Upsilon$, at which $f=1$, and the choice $s=\pi/4$ produces $\frac i2\left(\begin{matrix}0&1\\1&0\end{matrix}\right)$, which lies on the closed orbit with $f=-1$.  The choices $s$ and $\pi/2+s$ (with $s\in(0,\pi/4)$) produce representatives of the two orbits at which $f=\cos 4s\in(-1,1)$.

$\bullet$ A second collection of critical points (overlapping with the first) comes from points $(*)$ with $s=\pi/4$, $c=\pi/2$, and $r$ arbitrary.  Together, these can be viewed as a line bundle over the equator in $\twist GQ{\m^-}$, or as the $K$-conjugacy class of $\frac i2\left(\begin{matrix}0&1\\1&0\end{matrix}\right)$ in $\O$.  The point $(*)$ is 
$$\Ad_{\frac 1{\sqrt 2}\left(\begin{matrix}1&-1\\1&1\end{matrix}\right)\left(\begin{matrix}\cosh r&\sinh r\\\sinh r&\cosh r\end{matrix}\right)  }\Upsilon=\frac i2\left(\begin{matrix}0&e^{-2r}\\e^{2r}&0\end{matrix}\right)$$
which maps under $f$ to $\cosh 4r$.  

Thus, we see that the critical sets comprise  the representatives of the closed orbits described in \ref{sss:J1orbits}.

\subsection{Alternate moment-critical points for the action of $G_0$ on $\O$} 
Fix $\lambda$ with $0<|\lambda|<1$ and take $\nu$ as in Section \ref{s:deformations}.  Using Theorem \ref{thm:Tlambdaformula}, we can compute $T_\lambda^{-1}\circ T_\nu$ for $SL(2)$.  We consider the moment-critical subsets for $J_3$ (\S\ref{sss:J3critical}) and report their images under $T_\lambda^{-1}\circ T_\nu$.

$\bullet$ Regarding the {\em first} type,  the moment-critical point (cotangent vector at the north pole) 
$$\Ad_{\left(\begin{matrix} e^{ic}&0\\0&e^{-ic}\end{matrix}\right)\left(\begin{matrix}\cosh r&\sinh r\\\sinh r&\cosh r\end{matrix}\right)    }\Upsilon=\frac i2\left(\begin{matrix}\cosh 2r&e^{2ic}\sinh 2r\\-e^{-2ic}\sinh 2r&-\cosh 2r\end{matrix}\right)$$
pushes forward under $T_\lambda^{-1}\circ T_\nu$ to 

$$\begin{aligned}&\Ad_{\left(\begin{matrix}e^{ic}&0\\0&e^{-ic}\end{matrix}\right)\left(\begin{matrix}\cos s'&\sin s'\\-\sin s'&\cos s'\end{matrix}\right)\left(\begin{matrix}\cosh r'&\sinh r'\\\sinh r'&\cosh r'\end{matrix}\right)}\Upsilon\\
\phantom{.}\qquad&=\frac i2\left(  \begin{matrix}
\cos 2s'\cosh 2r'&e^{2ic}(\sinh 2r'-\sin 2s'\cosh 2r')\\
-e^{-2ic}(\sinh 2r'+\sin 2s'\cosh 2r')&-\cos 2s'\cosh 2r'
\end{matrix}\right)\end{aligned}$$
where 
$$\begin{aligned}
s'&=\frac 12\arctan\left( \dfrac{-(\sinh 2r)\sqrt{(\cosh^2 2r)\left(1-|\lambda|^2\right)^2+4|\lambda|^2}}{2|\lambda|}\right)\\
r'&=\frac 12\text{arcsinh}\left(\frac 12\left(\frac 1{|\lambda|}-|\lambda|\right)\sinh 2r\right)
\end{aligned}$$
are odd functions of $r$.  

\noindent One computes that the invariant $f$ takes the value 
$$f=1-2\sin^2(2s')\cosh^2(2r')=  1-\tanh^2(2r)\left(4|\lambda|^2+(1-|\lambda|^2)^2\cosh^2(2r)\right)/{2|\lambda|^2} $$
from which it follows that $f$ takes on all values in $(-\infty,1]$ on this critical set.  
The function $f$ takes on the value $-1$ in this critical set at points $\frac i2\begin{pmatrix}A&B\\C&-A\end{pmatrix}$ where we see that $A$ is real and positive, hence at non-closed orbits.  

$\bullet$ The {\em second} type, consisting of points on the equator, are fixed by $T_\lambda^{-1}\circ T_\nu$. These are the ``overlap'' in the two critical sets for $J_1$, as reported in \S\ref{sss:J1critical}. 

Thus, we obtain a critical subset which, on the set $f=-1$, represents the closed orbit (via critical points of the second type), and two of the four non-closed orbits (via critical points of the first type).  This is remarkably different from what was described for $J_1$ in Section \ref{sss:J1critical}.


\end{document}